\documentclass[11pt]{amsart}

\usepackage[all,color]{xy}
\usepackage{pb-diagram}
\usepackage[mathscr]{eucal}
\usepackage{hyperref}
\hypersetup{
    colorlinks=true, 
    linktoc=all,     
    linkcolor=blue,  
}

\usepackage{xcolor}

\usepackage[toc,page]{appendix}
\usepackage{pifont}
\usepackage{combelow} 

\RequirePackage{ifthen,setspace,enumitem,tikz}
\usetikzlibrary{arrows}

\DeclareMathAlphabet{\mathpzc}{OT1}{pzc}{m}{it}

\usepackage{amsfonts}
\usepackage{amsmath}
\usepackage{amssymb}
\usepackage{needspace}

\newcommand{\tr}{\textnormal{tr}}

\newcommand{\dbar}{\overline{\partial}}

\newcommand{\Ric}[2]{R_{#1 \overline{#2}}}

\newcommand{\diam}{\textnormal{Diam}}

\newcommand{\ddbar}{\sqrt{-1}\partial\dbar}

\def\Vol{\textnormal{Vol}}

\newtheorem{theorem}{Theorem}[section]
\newtheorem{proposition}{Proposition}[section]
\newtheorem{lemma}{Lemma}[section]

\newtheorem{corollary}{Corollary}[section]

\newtheorem{conjecture}{Conjecture}[section]

\numberwithin{equation}{section}

\renewcommand{\Ric}{\operatorname{Ric}}

\setlist[enumerate]{leftmargin=*,label=\textup{(\roman*)},itemsep=3pt}
\allowdisplaybreaks[1]
\hypersetup{pdftitle={Type I estimates for the Kahler-Ricci flow II},
 pdfauthor={Wangjian Jian and Jian Song},
 pdfsubject={Diameter, Gromov-Hausdorff limits, and Type I tangent flows; working draft}}
\title[Type I estimates for the K\"ahler-Ricci flow II]{Type I estimates for the K\"ahler-Ricci flow II}
\author{Wangjian Jian}
\address{Institute of Mathematics, Academy of Mathematics and Systems Science,
Chinese Academy of Sciences, Beijing, 100190, China}
\email{wangjian@amss.ac.cn}
\thanks{Wangjian Jian is supported in part by NSFC grants 12422103, 12201610,
12371058, and 12288201; BJNSF grant JR25002; and National Key R\&D Program
of China grants 2023YFA1009900 and 2021YFA1003100.}
\author{Jian Song}
\address{Department of Mathematics, Rutgers University,
Piscataway, NJ 08854, USA}
\email{jiansong@math.rutgers.edu}
\thanks{Jian Song is supported in part by the National Science Foundation
grant DMS-2505575.}

\date{}

\newcommand{\GH}{\mathrm{GH}}

\begin{document}
\raggedbottom
\begin{abstract}
In this paper, we establish geometric consequences of our Type I scalar
curvature estimate for finite-time solutions of the K\"ahler-Ricci flow
on compact K\"ahler manifolds. We prove a uniform global diameter bound
and a Type I diameter bound for every fibre of finite time holomorphic contraction. These estimates
yield Gromov-Hausdorff convergence to a unique compact metric space
homeomorphic to the normal K\"ahler space determined by the limiting
cohomology class. We also establish a local partial $C^0$ estimate for
the normalized flow and show that the tangent shrinkers are normal
complex analytic varieties with log terminal singularities.
\end{abstract}
\maketitle

\section{Introduction}\label{sec:introduction}

This paper is a sequel to \emph{Type I estimates for the
K\"ahler-Ricci flow I} \cite{JSTypeI}, where we established a Type I
bound for the scalar curvature along finite-time solutions of the
K\"ahler-Ricci flow on compact K\"ahler manifolds. Our purpose here is
to derive the geometric consequences of this bound.

The K\"ahler-Ricci flow provides a natural analytic approach to the
classification of K\"ahler manifolds. In the analytic minimal model
program proposed by Song and Tian
\cite{STKodaira,STCanonical,STSingularities}, the flow is expected to
carry out the birational surgeries of the minimal model
program, its finite-time singularities corresponding to contractions
determined by the evolving cohomology class. A basic problem is
therefore to understand the geometry of the flow as it approaches a
singular time: whether the metrics converge, what the limit is, and
what the singularities look like at their natural parabolic scale. We
shall see that the Type I bound for the scalar curvature is strong
enough to provide substantial answers to these questions. It yields
global and fibre diameter bounds and Gromov-Hausdorff convergence at
the singular time, and, at the Type I scale, a local partial
$C^0$-estimate together with the complex analytic structure of blow-up
limits.

\medskip
We now describe the setting more precisely.
Let $X$ be a compact   K\"ahler manifold of complex dimension
$n\ge2$, and let $\omega_0$ be a smooth K\"ahler metric on $X$. The
K\"ahler-Ricci flow evolves the metric by its Ricci form and preserves
the K\"ahler condition. We consider its maximal unnormalized solution
\begin{equation}\label{eq:flow}
 \partial_t\omega=-\Ric(\omega),\qquad \omega(0)=\omega_0,
 \qquad 0\le t<T<\infty.
\end{equation}
We write $g(t)=2\omega(t)(\,\cdot\,,J\,\cdot\,)$ and 
$$ d_t=d_{g(t)}, $$
following the normalization of Part I. The maximal existence time is
characterized cohomologically by
\begin{equation}\label{eq:maximaltime}
 T=\sup\bigl\{t>0:\ [\omega_0]-2\pi t c_1(X)
                 \text{ is a K\"ahler class}\bigr\}.
\end{equation}
This characterization is due to Tian and Zhang \cite{TZMax},
extending earlier work of Tsuji \cite{Tsuji}. We stress that no
rationality assumption on $[\omega_0]$ and no projectivity assumption
on $X$ will be imposed.

The first step is to identify the expected limit of the flow. The
limiting cohomology class
\[
 \alpha=\lim_{t\rightarrow T}[\omega(t)]
       =[\omega_0]-2\pi T c_1(X)
\]
is nef but not K\"ahler, and it is natural to expect that it governs
the geometry of the flow at time $T$. At the level of complex geometry,
this is made precise by the transcendental base-point-free theorem of
Hacon and Xie \cite[Theorem~1.4]{HX}. In the smooth case needed here,
it asserts that a nef real $(1,1)$-class $\lambda$ with
$\lambda-c_1(K_X)$ K\"ahler is the pullback of a K\"ahler class under
a holomorphic contraction. Since
\[
 \frac{\alpha}{2\pi T}-c_1(K_X)=\frac{[\omega_0]}{2\pi T},
\]
we obtain a surjective holomorphic map with connected fibres
\begin{equation}\label{eq:contraction}
 \Phi:X\longrightarrow Y,\qquad \alpha=\Phi^*[\theta_Y],
\end{equation}
where $Y$ is a compact normal K\"ahler space and $\theta_Y$ is a
K\"ahler form on $Y$.   Here the identity holds in real Bott-Chern
cohomology, and smooth forms on $Y$ are understood through local
ambient potentials, so that $\theta=\Phi^*\theta_Y$ is smooth and
semipositive on $X$. In particular, if $Y$ is a point, we set $\theta=0$. Throughout
the paper, fibre diameters are measured with respect to the ambient
distance $d_t$ on $X$.

The contraction \eqref{eq:contraction} indicates how the flow should
degenerate: the curvature should blow up at most at the Type I rate,
while the fibres of $\Phi$ should collapse at the corresponding
parabolic rate. This was formulated by Jian, Song and Tian
\cite[Conjecture~1.1]{JST} for rational initial classes; we state
their conjecture in the present K\"ahler setting.

\begin{conjecture} \label{conj:JST}
For the flow \eqref{eq:flow} and the contraction \eqref{eq:contraction},
there exists a constant $C>0$ such that
\[
 |\mathrm{R}_{g(t)}(x)|\le\frac{C}{T-t},\qquad
 \diam_{d_t}\Phi^{-1}(y)\le C(T-t)^{1/2}
\]
for all $x\in X$, $y\in Y$ and $0\le t<T$.
\end{conjecture}

The model case is that of a Fano manifold with
$[\omega_0]=2\pi T c_1(X)$, for which $Y$ is a point. After the
rescaling
\[
 \widehat\omega(s)=(T-t)^{-1}\omega(t),\qquad
 s=-\log\frac{T-t}{T},
\]
the flow becomes the normalized K\"ahler-Ricci flow
$\partial_s\widehat\omega=-\Ric(\widehat\omega)+\widehat\omega$
in $2\pi c_1(X)$. In this setting, Perelman's entropy and
noncollapsing theory \cite{Perelman} yields uniform bounds for the
scalar curvature, the diameter, and the normalized Ricci potential and
its gradient. A detailed account of Perelman's work on the Fano Ricci flow was given by Sesum and Tian
\cite{SesumTian}. In particular, writing
$\widehat g(s)=(T-t)^{-1}g(t)$, Perelman's diameter estimate becomes
\[
 \diam(X,g(t))=(T-t)^{1/2}\diam(X,\widehat g(s))
 \le C(T-t)^{1/2},
\]
which is precisely the fibre estimate of Conjecture~\ref{conj:JST}
when $Y$ is a point.

Beyond diameter bounds, one expects the flow itself to converge, and
the limit to be the contraction $Y$, now viewed as a metric space.
This prediction for the unrescaled metrics is part of Song and Tian's
analytic minimal model program \cite[Section~6.2]{STSingularities}. Special cases of smooth divisorial contractions 
were established by Song and Weinkove
\cite{SW,SWII,SWNotes}, with further metric flips and higher-codimensional
birational surgeries  achieved by Song and Yuan \cite{SYFlips}
and Song \cite{SongSurgery}. In the volume-noncollapsing case, Collins
and Tosatti \cite[Theorem~1.5]{CollinsTosatti} identified the
singularity locus with the null locus of the limiting class. Related results on finite-time collapse
and subsequential Gromov-Hausdorff convergence were obtained for
projective bundles by Song, Sz\'ekelyhidi and Weinkove \cite{SSW}
and for certain Fano bundles by Fu and Zhang \cite{FuZhang}.

\begin{conjecture} \label{conj:STmetric}
As $t\rightarrow T$, the unnormalized K\"ahler-Ricci flow
\eqref{eq:flow} converges in the Gromov-Hausdorff topology to a compact
metric space homeomorphic to $Y$, the normal K\"ahler space induced
by the limiting cohomology class through \eqref{eq:contraction}.
\end{conjecture}

At the Type I scale, the Fano case again provides the model, and here
the theory is considerably more developed. A central tool is the
partial $C^0$-estimate, introduced by Tian \cite{Tian90}, which
relates metric convergence to algebraic geometry through holomorphic
sections. A further result in this direction was obtained by
Donaldson and Sun \cite{DS}. Along the normalized Fano K\"ahler-Ricci flow, Tian and Zhang
\cite{TZFano} established partial $C^0$-estimates in arbitrary
complex dimension assuming the Hamilton--Tian conjecture,
and proved the conjecture in complex dimension at most three. In arbitrary
dimension, the Hamilton-Tian conjecture was established by Bamler \cite{BamBounded}
through his compactness and regularity theory for Ricci flows
with bounded scalar curvature, and by Chen and Wang
\cite{CWII}.

The question of uniqueness has also been settled in the Fano case.
Following earlier work of Tian-Zhang \cite{TZFano}, Dervan-Sz\'ekelyhidi
\cite{DervanSzekelyhidi} and Chen-Sun-Wang \cite{CSWFano}, which relates the limit of the flow to an
optimal degeneration of $X$, Han and Li \cite{HanLi} established the
algebraic uniqueness of the optimal degeneration, showing in
particular that the limit is independent of the initial metric in
$2\pi c_1(X)$. These results suggest that, in the general finite-time
setting, each pointed contraction should likewise admit a canonical
shrinking soliton model. Motivated by the two-step degeneration
picture of Sun and Zhang \cite[Conjectures~6.4 and 6.8]{SunZhang}, we
formulate the following conjecture.

\begin{conjecture} \label{conj:optimalshrink}
For every fixed $p\in X$, there exists a unique complete, possibly
singular, K\"ahler-Ricci shrinker $(Z,g_Z,p_\infty)$ such that
\[
 \bigl(X,(T-t)^{-1}g(t),p\bigr)
 \xrightarrow[t\rightarrow T]{\text{pointed Gromov-Hausdorff}}
 (Z,g_Z,p_\infty).
\]
The shrinker is the algebraic optimal degeneration of the pointed
contraction $(X\xrightarrow{\Phi}Y,p)$.
\end{conjecture}

Hallgren and Zhang \cite{HZ}
recently established the quasi-projectivity of singular
K\"ahler-Ricci shrinkers arising as noncollapsed limits of compact
K\"ahler-Ricci flows. More precisely, each tangent shrinker $Z$ in
Theorem~\ref{thm:tangent} admits a polarized Fano fibration $Z\to W$, where
$Z$ is a normal quasi-projective variety with log terminal
singularities, $W$ is a normal affine variety, and the morphism is
projective with connected fibres and relatively ample $-K_Z$.

Conjecture~\ref{conj:optimalshrink} generalizes the compact soliton
and optimal-degeneration picture for the normalized Fano
K\"ahler-Ricci flow \cite{CSWFano,DervanSzekelyhidi,HanLi}. Here the relevant
contraction germ is taken over $\Phi(p)\in Y$, while $p$ is retained
as the marked point. In the projective setting, 'optimal
degeneration' refers to the K-polystable polarized Fano fibration
obtained from a weighted-volume-minimizing valuation and its two-step
degeneration, as proposed by Sun and Zhang
\cite[Conjectures~6.4 and 6.8]{SunZhang}. For general K\"ahler initial
data, the conjecture asks for the corresponding construction from the
local analytic contraction. Theorem~\ref{thm:tangent} below
establishes the existence and normal complex analytic structure of
subsequential tangent shrinkers, and thus provides an analytic
foundation for this conjecture. In view of this progress, particularly with the Type I scalar curvature estimates, we believe that
Conjecture~\ref{conj:optimalshrink} is now well within reach, and
expect it to be resolved in the near future.

\subsection{Diameter estimates}\label{subsec:introdiameter}

We now describe our main results. The first is a uniform bound for the
diameter of $X$, together with a Type I bound for the diameters of all
contracted fibres.

\begin{theorem} \label{thm:diameter}
Let $d_t=d_{g(t)}$ be the Riemannian distance on $X$ induced by $g(t)$.
Then there exists a constant $C>0$, depending only on the initial data
and the fixed background choices, such that for all $0\le t<T$:
\begin{enumerate}
\item[(i)] The diameter of $X$ is uniformly bounded
\begin{equation}\label{gldiam}
 \diam(X,g(t))\le C.
\end{equation}
\item[(ii)] Every contracted fibre satisfies the Type I diameter bound
\begin{equation}\label{eq:fibreintro}
 \diam_{d_t}\Phi^{-1}(y)\le C(T-t)^{1/2},
 \qquad for ~each~ y\in Y.
\end{equation}
\end{enumerate}
\end{theorem}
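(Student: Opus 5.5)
The plan is to take the Type I scalar curvature bound of Part~I, $|\mathrm{R}_{g(t)}|\le C/(T-t)$, as the main input and combine it with Perelman-type noncollapsing and distance-distortion estimates at the parabolic scale. The starting point is the standard parabolic Schwarz lemma: since $\theta=\Phi^*\theta_Y$ comes from local ambient potentials on $Y$ of bounded bisectional curvature, the Chern--Lu inequality gives $\tr_{\omega(t)}\theta\le C$. Hence $\Phi:(X,g(t))\to(Y,d_{\theta_Y})$ is uniformly Lipschitz, and distances transverse to the fibres are controlled by the base. Consequently (ii) implies (i): any two points are joined by first following a path on $Y$ lifted piecewise, then crossing fibres. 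This lift is not Lipschitz, so the argument instead covers $Y$ by finitely many small balls and uses connectedness of $X$ together with the fibre bound and a uniform bound on lengths over those balls.

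For (ii) I pass to the rescaled flow $\widehat g(s)=(T-t)^{-1}g(t)$. In this normalization the Type I bound gives $|\widehat{\mathrm R}|\le C$. The Kähler potential along the flow also satisfies $\omega(t)=\alpha_t+\ddbar\varphi$, where $\alpha_t$ is a smooth reference form interpolating between $\omega_0$ and $\theta$. The key steps, in order, are the following.
\begin{enumerate}
\item Establish uniform bounds on the normalized Ricci potential $u$, with $\Ric(\widehat\omega)-\widehat\omega=\ddbar u$ up to the pulled-back term from $\theta$, and on $|\nabla u|^2_{\widehat g}\le C(1+u)$. These follow Perelman's Fano argument, using the scalar bound from Part~I together with Perelman's $\mu$-entropy monotonicity for the unnormalized flow.
\item Use $\kappa$-noncollapsing at scale $(T-t)^{1/2}$, which follows from the entropy bound and $\mathrm R\le C/(T-t)$.
\item Run a covering argument on each fibre $F=\Phi^{-1}(y)$. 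By a Kähler volume computation, $\Vol_{g(t)}$ of the tubular set $\Phi^{-1}(B_{\theta_Y}(y,r))$ is controlled by $\int\omega(t)^n$ restricted to cohomology near $\alpha$. For $r\sim(T-t)^{1/2}$ this volume is at most $C(T-t)^{n}$ times a quantity bounded in terms of fibre dimension. Noncollapsing then bounds the number of disjoint $\widehat g$-unit balls centred on $F$. Since $F$ is connected, a chain of such balls gives $\diam_{\widehat g}F\le C$.
\end{enumerate}

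The main obstacle is step (iii). Non-collapsing gives a lower bound on the volume of a unit ball, and this has to be compared with an upper bound on the $\widehat g$-volume of a neighbourhood of $F$ of uniform size. The neighbourhood must be chosen metrically, via $\Phi$ and the Lipschitz bound, because its cohomological size alone does not suffice. My first attempt is to estimate $\int_{\Phi^{-1}(B(y,r))}\omega(t)^n$ by expanding $\omega(t)^n$ against $\theta^{k}\wedge\omega_0^{n-k}$ with the $C^0$ bound on $\varphi$ and $\dot\varphi$. If this fails near singular fibres, I would instead bound the distance distortion along the flow using Bamler's heat-kernel and $H$-centre theory, which applies under a Type I scalar bound, and compare with the contraction rate of fibres in cohomology.
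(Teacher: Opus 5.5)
There are genuine gaps in both parts. For (i), your reduction to (ii) runs the Schwarz lemma in the wrong direction. The bound $\tr_{\omega(t)}\theta\le C$ means $\theta\le C\omega(t)$, hence $d_{\theta_Y}(\Phi(p),\Phi(q))\le C\,d_t(p,q)$. This bounds $d_t$ from \emph{below}, and gives no upper bound on the $g(t)$-length of any path. The ``uniform bound on lengths over those balls'' you invoke, i.e.\ a bound on $\diam_{d_t}\Phi^{-1}(B_i)$, is exactly the content of (i). When $\Phi$ is bimeromorphic the fibres over $Y^\circ$ are points, so (ii) says nothing there and (i) carries independent information. The paper proves (i) without reference to the fibres, in four steps. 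Integrating $\ddot\varphi=-R\ge-C/(1-t)$ gives $\dot\varphi\ge-C(1+|\log(1-t)|)$, hence a H\"older modulus in time for $\varphi$. The maximum principle for $\log\tr_{\omega_0}\omega+A(t\dot\varphi-\varphi)$ gives $\omega(t)\le C(1-t)^{-N}\omega_0$. Interpolating the two yields a uniform $C^\gamma(\omega_0)$ bound on $\varphi(t)$. Finally, $\omega(t)\le B\,\widetilde\omega(t)$ with $\widetilde\omega(t)=\omega_0+\ddbar u_t$ in the \emph{fixed} class $[\omega_0]$ and $u_t$ uniformly H\"older, so the Li/Guo--Song--Sturm estimate gives $d_t\le C\,d_0^{\gamma/2}$.

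For (ii), the covering argument needs $\Vol_{g(t)}\Phi^{-1}(B_Y(y,C\sqrt{1-t}))\le C(1-t)^n$ uniformly in $y$, and nothing you have gives it. Pointwise you only know $\omega(t)^n=e^{\dot\varphi}\Omega\le C\Omega$, which bounds the tube by its $\Omega$-volume. That volume is of order $(1-t)^{\dim Y}$ over a regular value, and only of order $(1-t)$ around a contracted divisor. Noncollapsing therefore bounds the number of disjoint balls of radius $c\sqrt{1-t}$ only by $(1-t)^{-(n-\dim Y)}$, resp.\ $(1-t)^{1-n}$. Cohomology controls $\int_F\omega(t)^{\dim F}=(1-t)^{\dim F}\int_F\omega_0^{\dim F}$ on the fibre itself, not the $2n$-volume of an open tube. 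Pushing it into a tube through $\ddbar\varphi$ and cutoffs at scale $\sqrt{1-t}$ would require $\operatorname{osc}\varphi=O(1-t)$ there, which is essentially the smallness you are trying to prove.

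Your fallback points in the paper's direction, but omits both ideas that make it work. The first is the one-sided distortion estimate $d_s(p,q)\le d_t(p,q)+C\sqrt{t-s}$ for $s<t$. The constant curve in the reduced distance, together with the Type I bound, gives $K(p,t;p,s)\ge c(t-s)^{-n}$. Against Bamler's Gaussian upper bound, this puts an $H_{2n}$-centre within $C\sqrt{t-s}$ of $p$, and $W_1$-contraction of conjugate heat kernels concludes. The second converts cohomological contraction into metric collapse. For a resolution $f:Z\to F_0$ of a fibre component, $f^*\theta=0$ makes the energy cohomological, $\int_Z|df|^2_{h_\eta,g(t)}\,d\sigma=c_{Z,f,\eta}(1-t)$. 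A Poincar\'e inequality for maps, with constant independent of the target, then gives $\iint d_t(f(z),f(w))^2\,d\sigma\,d\sigma\le C(1-t)$. Fixing $s$ and letting $t\to1$ in the distortion estimate yields $\diam_{d_s}\Phi^{-1}(y)\le C_0\sqrt{1-s}$. Taking this limit first is what removes the dependence on the resolution and on the number of components, so the constant is uniform in $y$.
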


Together with the scalar curvature estimate of Part I,
Theorem~\ref{thm:diameter}(ii) establishes Conjecture~\ref{conj:JST}
in full in the present K\"ahler setting.   When $Y$ is a point, the bound (ii) reduces to Perelman's
diameter estimate \cite{SesumTian} for Fano manifolds. In general, it
extends that estimate from the whole manifold to every contracted
fibre of $\Phi$. 

In the volume-noncollapsing case, the uniform diameter bound in
Theorem~\ref{thm:diameter}(i) was previously established by
Guo, Phong, Song and Sturm \cite{GPSSDiameter}. The two parts of Theorem~\ref{thm:diameter} are proved by rather
different methods. The global bound (i) follows from a uniform
H\"older estimate for the distance functions $d_t$ with respect to a
fixed background metric. This estimate is obtained by comparing
$\omega(t)$ with auxiliary K\"ahler metrics in the fixed class
$[\omega_0]$, to which the geometric H\"older estimates of Guo, Song
and Sturm \cite{GSS} apply uniformly, even though the classes of the
metrics along the flow degenerate. The fibre bound (ii) combines two
ingredients: a comparison of distances at different times, derived
from heat kernel estimates and the Wasserstein contraction of backward
heat flows, and the vanishing in the limit of the energy of
holomorphic maps whose images lie in a fibre.

\subsection{Gromov-Hausdorff convergence}\label{subsec:introGH}

Our second main result confirms Conjecture~\ref{conj:STmetric} for
arbitrary initial K\"ahler metrics, in both the volume-collapsing and
the volume-noncollapsing cases.

\begin{theorem} \label{thm:GH}
As $t\rightarrow T$, $(X,g(t))$ converges in the Gromov-Hausdorff topology
to a compact metric space $(Y,d_T)$, where $d_T$ induces the
complex-analytic topology of $Y$.
\end{theorem}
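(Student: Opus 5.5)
The plan is to build $d_T$ as a limit of the distance functions $d_t$ on $X$, show it factors through $\Phi$, and identify the quotient with $Y$. The first step is the uniform Hölder estimate behind Theorem~\ref{thm:diameter}(i). Fix a background Kähler metric $\omega_X$ on $X$. Comparing $\omega(t)$ with auxiliary metrics in $[\omega_0]$ to which the Guo--Song--Sturm estimates apply, we get
\[
 d_t(x,y)\le C\,d_{\omega_X}(x,y)^{\gamma}
\]
for constants $C,\gamma>0$ independent of $t$. The family $\{d_t\}_{t<T}$ is therefore uniformly bounded and equicontinuous on $X\times X$. By Arzelà--Ascoli, every sequence $t_i\to T$ has a subsequence with $d_{t_i}\to d_\infty$ uniformly, where $d_\infty$ is a continuous pseudometric on $X$.

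Next I show that every such $d_\infty$ descends to $Y$. By Theorem~\ref{thm:diameter}(ii), each fibre $\Phi^{-1}(y)$ has $d_t$-diameter at most $C(T-t)^{1/2}$, so $d_\infty$ vanishes on fibres. Since $\Phi$ is a proper surjection with connected fibres, $d_\infty$ induces a continuous pseudometric $\bar d$ on $Y$.

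Two things must then be checked: that $\bar d$ is a genuine metric inducing the analytic topology of $Y$, and that it does not depend on the subsequence.

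\emph{Separation.} Take $y_1\neq y_2$ in $Y$. I would pick a local potential or a bounded function $f$ on $Y$, namely a Lipschitz function built from the Kähler form $\theta_Y$ in local embeddings, with $f(y_1)\ne f(y_2)$. Then I would bound $|\nabla(\Phi^*f)|_{g(t)}$ uniformly, via a Schwarz-lemma/parabolic Chern--Lu estimate $\operatorname{tr}_{\omega(t)}\theta\le C$. Such an estimate holds along finite-time flows because $\omega(t)\ge c\,\theta$; this should follow from the maximum principle applied to $\log\operatorname{tr}_\omega\theta - A\varphi$, using the Type I and $C^0$ bounds from Part I. It gives
\[
 d_t(x,x')\ge c\, d_{\theta_Y}(\Phi(x),\Phi(x')),
\]
so $\bar d(y_1,y_2)>0$. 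Since $Y$ is compact and $\bar d$ is continuous in the analytic topology, the identity map $(Y,\text{analytic})\to (Y,\bar d)$ is a continuous bijection from a compact space to a Hausdorff one, hence a homeomorphism. Once $d_{t_i}$ converges uniformly on $X\times X$ to $\Phi^*\bar d$, the map $\Phi$ is an $\epsilon_i$-isometry, which gives Gromov--Hausdorff convergence along the subsequence.

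\emph{Uniqueness.} I expect this to be the main obstacle. Subsequential limits could a priori differ, so I need full convergence $d_t\to d_T$. The plan is to use the comparison of distances at different times from the proof of Theorem~\ref{thm:diameter}(ii), coming from the heat kernel bounds and the Wasserstein contraction of conjugate heat flows. For $s<t$ it should give an almost-monotonicity of the form
\[
 d_t(x,y)\le d_s(x,y)+C\sqrt{t-s}
\]
(up to the $\Phi$-fibre errors), together with a reverse bound using the Type I scalar bound. With these in hand, $\limsup_{t\to T}$ and $\liminf_{t\to T}$ of $d_t(x,y)$ coincide, because $\sqrt{t-s}\to0$. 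This makes $d_T=\lim d_t$ well defined pointwise, and equicontinuity upgrades pointwise convergence to uniform convergence.

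If the reverse inequality turns out to be hard, the one-sided estimate still suffices. It shows that $d_t+C\sqrt{T-t}$ is essentially monotone in $t$, and a monotone bounded family has a unique limit, which completes the argument.
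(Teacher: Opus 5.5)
Your architecture matches the paper's proof step for step: uniform H\"older equicontinuity of $d_t$; vanishing of $d_\infty$ on fibres by the Type I fibre bound; separation via the Schwarz estimate $\Phi^*\theta_Y\le C\omega(t)$ applied to an ambient-smooth separating function; the compact-to-Hausdorff argument for the topology; and the graph of $\Phi$ as a correspondence. The one step that fails as written is uniqueness, where the time comparison is backwards. The heat-kernel/Wasserstein mechanism controls \emph{earlier} distances by \emph{later} ones. The contraction $d^{g(s)}_{W_1}(\nu_{p,t;s},\nu_{q,t;s})\le d_t(p,q)$, combined with the concentration $\int_X d_s(p,z)^2\,d\nu_{p,t;s}(z)\le C(t-s)$ of Lemma~\ref{lem:moment}, yields
\[
 d_s(p,q)\le d_t(p,q)+C\sqrt{t-s},\qquad s<t,
\]
which is Lemma~\ref{lem:expansion}. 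The Type I bound enters exactly in that concentration estimate, through the reduced distance of the constant curve and Bamler's Gaussian upper bound at an $H_{2n}$-center. Your displayed inequality $d_t\le d_s+C\sqrt{t-s}$ instead says distances cannot \emph{grow} too fast, and the Wasserstein argument gives nothing of the sort. Bounding distance expansion is a different kind of estimate: pointwise it needs a lower bound on $\Ric$, and Bamler--Zhang-type distortion estimates need a scalar curvature bound on the whole interval, which degenerates as $t\to T$. You give no argument for it, and your fallback rests on this direction alone, so uniqueness is not established as written.

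The fix is easy, because either one-sided inequality suffices and the correct one is the ``reverse bound'' you mention. Choose $s_j\to T$ with $d_{s_j}(p,q)\to L:=\limsup_{t\to T}d_t(p,q)$. For $t>s_j$ the display gives $d_t(p,q)\ge d_{s_j}(p,q)-C\sqrt{T-s_j}$, so $\liminf_{t\to T}d_t(p,q)\ge L$. Your phrase ``$d_t+C\sqrt{T-t}$ is essentially monotone'' does not follow from a $C\sqrt{t-s}$ error, since $\sqrt{T-s}-\sqrt{T-t}<\sqrt{t-s}$. It is this limsup/liminf argument, not monotonicity, that produces the pointwise limit. With that correction, equicontinuity upgrades the pointwise limit to uniform convergence, the Arzel\`a--Ascoli subsequences become unnecessary, and your proof coincides with the paper's (Lemma~\ref{lem:distancelimit}, Corollary~\ref{cor:separation}, Proposition~\ref{prop:GHall}).
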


In the volume-noncollapsing case, Gromov--Hausdorff convergence
to a unique limit was established by Li-Zhang
\cite[Theorem~1.1]{LiZhangGH}. Li-Zhang \cite[Section~4]{LiZhangShrinkers} also obtained an
analogous identification for the canonical flows of noncompact
K\"ahler-Ricci shrinkers. Theorem~\ref{thm:GH} covers
volume collapsing case and identifies the limit with the contraction
space $Y$ endowed with its complex-analytic topology. 

Thus the contraction selected by the limiting cohomology class is
realized by the metric limit of the flow as well. More precisely, as
$t\rightarrow T$, the distances $d_t$ converge uniformly on
$X\times X$ to the function $(p,q)\mapsto d_T(\Phi(p),\Phi(q))$, so
that the Gromov-Hausdorff convergence in Theorem~\ref{thm:GH} is
realized by the contraction $\Phi$ itself. The proof rests on the
diameter estimates as the fibre diameter bound identifies the points
which merge at time $T$, the Schwarz estimate separates distinct
fibres, and the uniform H\"older control of the distances ensures
that the resulting metric induces the original topology of $Y$.

In the noncollapsed case, the limit can be described more concretely.
If the limiting volume is positive, equivalently $\dim_{\mathbb C}Y=n$,
then $\Phi$ is bimeromorphic. Let $Y^\circ$ be the complement of
$Y_{\mathrm{sing}}$ and the critical values of $\Phi$ over
$Y_{\mathrm{reg}}$, and set $X^\circ=\Phi^{-1}(Y^\circ)$.
Under the biholomorphism $\Phi:X^\circ\to Y^\circ$, the flow converges
smoothly on compact subsets to a K\"ahler metric $g_T$ on $Y^\circ$,
whose intrinsic distance satisfies
\[
 d_T(y,y')\le d_T^\circ(y,y')\le C d_T(y,y')^\kappa,
 \qquad y,y'\in Y^\circ,
\]
for some $C>0$ and $\kappa\in(0,1)$. Thus the identity map from
$(Y^\circ,d_T^\circ)$ to $(Y^\circ,d_T)$ is $1$-Lipschitz and
bi-H\"older. These assertions are proved in Section~\ref{sec:GH}.

\subsection{Type I blow-up}\label{subsec:introblowup}

We finally turn to the geometry at the Type I scale, which describes the
local models of singularity formation. Fix $x\in X$, let
$t_i\rightarrow T$, and set $\tau_i=T-t_i$. We consider the rescaling
about the singular time
\begin{equation}\label{eq:blowupintro}
 g_i(s)=\tau_i^{-1}g(T+\tau_i s),\qquad -T/\tau_i\le s<0,
\end{equation}
under which the original time $t_i$ corresponds to $s=-1$ and the
base point remains $x$.  

Our third main result shows that these rescalings subconverge to
singular shrinking solitons which are normal complex analytic
varieties.

\begin{theorem} \label{thm:tangent}
For any fixed $x\in X$ and any sequence $\tau_i\rightarrow0$, a
subsequence of \eqref{eq:blowupintro} converges to a complete singular
shrinking K\"ahler-Ricci soliton flow with the following properties.
\begin{enumerate}
\item[(i)]   Let $(Z,d)$ denote its time $-1$
slice. Then there are points $z(s)\in Z$ such that, for every fixed $s<0$,
\begin{equation}\label{eq:intropointedGH}
 (X,d_{g_i(s)},x)\xrightarrow[i\to\infty]{\text{pointed Gromov-Hausdorff}}
 (Z,\sqrt{|s|}\,d,z(s)).
\end{equation}
The space $Z$ is independent of $s$ and the convergence is smooth on compact
subsets of the regular limiting spacetime.

\item[(ii)]  The dense
regular locus $\mathcal R_Z$ of $Z$ carries a metric $g_Z$ and a potential
$f$ satisfying
\begin{equation}\label{eq:introsoliton}
 \Ric_{g_Z}+\nabla^2_{g_Z} f=\frac12 g_Z,
\end{equation}
whose metric completion is $Z$. The singular set has real Minkowski
codimension at least four, and every metric tangent cone is a
Ricci-flat K\"ahler cone.

\item[(iii)]  The space $Z$ is a normal
complex analytic variety with log terminal singularities. Its
analytic topology agrees with its metric topology, its smooth locus
is $\mathcal R_Z$.

\end{enumerate}
\end{theorem}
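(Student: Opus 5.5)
The proof combines Bamler's compactness and partial regularity theory for Ricci flows with the Type I scalar curvature bound of Part I, and then uses a partial $C^0$-estimate to get the complex analytic structure.

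\emph{Step 1: uniform bounds on the rescaled flows.} By Part I, $|\mathrm{R}_{g(t)}|\le C/(T-t)$. Hence the rescaled flows $g_i(s)$ in \eqref{eq:blowupintro} satisfy $|\mathrm{R}_{g_i(s)}|\le C/|s|$. This is uniform on $[-A,-A^{-1}]$ for every $A>1$.

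Perelman's $\mu$-functional at scale $\tau_i$ is bounded below, so we also have uniform $\kappa$-noncollapsing and a lower bound on the pointed Nash entropy $\mathcal N_{x,0}(\tau)$.

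\emph{Step 2: limit flow and soliton structure (parts (i) and (ii)).} Bamler's compactness theory for Ricci flows with bounded scalar curvature (as in the Hamilton--Tian setting) then gives a subsequence converging to a limit metric flow. On this limit:
\begin{itemize}
\item the convergence is smooth on the regular part;
\item the singular set has Minkowski codimension at least four;
\item every tangent cone is a Ricci-flat cone.
\end{itemize}

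The pointed Nash entropy based at $(x,T)$ is monotone in $\tau$ and bounded. Its limit along the blow-up is therefore constant in $\tau$. Bamler's rigidity then forces the limit to be a shrinking soliton, which gives \eqref{eq:introsoliton} with a potential $f$ and self-similar slices $(Z,\sqrt{|s|}\,d)$.

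It remains to show that pointed Gromov--Hausdorff convergence holds at each fixed $s$ with base point $x$, as in \eqref{eq:intropointedGH}, and not merely $\mathbb F$-convergence. The plan is to argue as follows:
\begin{itemize}
\item bounded scalar curvature gives a time-wise distance distortion estimate, via the heat kernel and Wasserstein comparison already used for Theorem~\ref{thm:diameter};
\item combined with the $H_n$-center estimates, this shows that the centers of the conjugate heat kernels based at $(x,0)$ stay at bounded distance from $x$.
\end{itemize}
The points $z(s)$ are then the limits of $x$. The Kähler condition passes to the limit on the regular part, and the tangent cones are Kähler, because the complex structures $J$ are parallel and converge smoothly on the regular set.

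\emph{Step 3: analytic structure (part (iii)).} We prove a local partial $C^0$-estimate at the Type I scale. Fix a ball of $Z$, and consider the line bundles $L^k=K_X^{-k}$, twisted locally by a potential of $\theta$ or equivalently trivialised near $\Phi^{-1}(\Phi(x))$. These carry curvature $\approx k\widehat\omega$ plus controlled errors.

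The procedure is the Tian / Donaldson--Sun scheme:
\begin{itemize}
\item construct peaked holomorphic sections on tangent cones, which are Ricci-flat Kähler cones with codimension-four singular set;
\item transplant them to the approximating rescaled metrics using cutoffs, which is possible because the singular set has capacity zero;
\item correct them with Hörmander $L^2$ estimates, using the weight $e^{-f}$ and the soliton identity $\Ric+\nabla^2 f=\tfrac12 g$ to supply positivity;
\item control the sections in $C^0$ and gradient by the heat kernel mean value inequalities, which hold along the flow with bounded Nash entropy.
\end{itemize}
This yields local holomorphic embeddings of balls in $Z$ into $\mathbb C^N$, uniformly along the sequence. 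Taking limits gives a homeomorphism from the metric topology onto a local analytic set. Normality then follows from the Hartogs extension across the codimension-four singular set. The log terminal property follows from the finiteness of $\int |s|^{2/k}e^{-f}$, obtained from the volume form of the shrinker and the bounded Nash entropy, by the standard adjunction-type criterion.

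\emph{Main obstacle.} The main obstacle is Step 3. The bundle $K_X^{-1}$ is not positive on $X$, and the background class $\alpha$ only degenerates along fibres. Under the rescaling, however, $-K_X+\tau_i^{-1}\alpha$ becomes positive at unit scale. So the first thing to try is to work on $\Phi^{-1}(U)$ for a Stein neighbourhood $U\ni\Phi(x)$, where $\alpha$ is trivial and the curvature of $K_X^{-k}$ is exactly $k\,\omega(t)/(T-t)$ up to $\ddbar$ of a potential. Hörmander's estimate can then be applied on this weakly pseudoconvex domain, with the cutoff controlled by the diameter estimate of Theorem~\ref{thm:diameter}(ii).
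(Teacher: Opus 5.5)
Your Steps~1--2 broadly follow the paper's route, but two ingredients need to be explicit. First, the paper gets the shrinking limit from Bamler's tangent-flow theorem \cite[Theorem~2.37]{BamStructure}, applied to a conjugate heat flow $\mu^x$ based at the singular time: a limit of heat kernels with poles $(x,t_j)$, $t_j\to1$. Your ``Nash entropy based at $(x,T)$'' presupposes this object, and the required vanishing of its variance is \eqref{eq:singulartimemoment}, which comes from Lemma~\ref{lem:moment}. Second, pointed Gromov--Hausdorff convergence needs more than the centers staying near $x$. The paper uses a uniform lower bound $\nu_{i,s}(B(y,r))\ge c\,r^{2n}$, from Hallgren's Gaussian lower bound and noncollapsing, as the input to \cite[Proposition~2.7]{HallgrenGW}, and \cite{BZII} for continuity in $s$.

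The genuine gap is Step~3. The paper does not derive (iii) from a partial $C^0$-estimate. It applies \cite[Theorem~B(i),(iii),(iv)]{HZ} directly to the tangent shrinker (their case~(I) covers finite-time tangent flows). This gives normality, log terminal singularities, equality of the topologies and of the regular loci, and the Ricci-flat K\"ahler tangent cones. The partial $C^0$-estimate of Section~\ref{sec:partialC0} is logically downstream of this. Its peak-section lemma uses log terminality of the tangent cones from \cite[Theorem~B(iv)]{HZ}, together with Braun's theorem, to remove the flat unitary factor, so it cannot be borrowed here.

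As an independent route, your Step~3 fails at three concrete points.
\begin{enumerate}
\item After transplanting, the $\bar\partial$-correction takes place on the smooth slices $(D_1,g_i(-1))$, where there is no soliton identity and no $f$. H\"ormander's estimate for sections of $K_X^{-k}$ needs a pointwise bound $\Ric(\hat\omega)+\Theta(\tilde h)\ge c\,\hat\omega$ with a uniformly bounded weight, while only $R$ is controlled. Viewing sections as $K_X^{-(k+1)}$-valued $(n,0)$-forms removes $\Ric$, but introduces the weight $e^{-w_i}$ with $w_i=(U+\rho\circ\Phi)/\tau_i$, whose oscillation is not uniformly bounded. What is missing is a bounded-weight substitute for a Ricci lower bound. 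The paper supplies it by combining the exact local polarization $\Theta(h_i)=\omega_{i,-1}$ with the short-time correction of Lemma~\ref{lem:shortpotential}: $\Ric(\omega)+\ddbar v_\delta=\delta^{-1}(\omega(-\delta)-\omega)\ge-\delta^{-1}\omega$, with $\|v_\delta\|_\infty\le\Lambda\delta/2$.
\item The gradient bound meets the same Ricci term in the Bochner formula for $|\nabla\sigma|^2$. A holomorphic section on one slice solves no heat equation along the flow, so heat-kernel mean-value inequalities do not touch this term. The paper absorbs it by integrating by parts against $v_\delta$, with $\delta=\delta_*p^{-2}$, inside the Moser iteration.
\item Even granting a partial $C^0$-estimate, what remains is essentially the content of \cite{HZ}.
\begin{itemize}
\item[--] ``Taking limits gives a homeomorphism onto a local analytic set'' hides injectivity and the analyticity of the limiting image on a noncompact space.
\item[--] Hartogs--Riemann extension presupposes normality. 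You would instead have to identify $Z$ with the normalization of the image and check local irreducibility.
\item[--] The klt criterion needs $K_Z$ to be $\mathbb{Q}$-Cartier, which you never establish.
\item[--] The flat unitary factor of the limiting bundle on the regular part of each cone must be trivialized, for instance by a Dirichlet-approximation argument as in Donaldson--Sun. Braun's theorem cannot be used, since it requires log terminality that is not yet available.
\end{itemize}
\end{enumerate}
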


If the original flow extends smoothly to time $T$ near $x$, the limit
is simply the Gaussian shrinking soliton.  We note that
Theorem~\ref{thm:tangent} establishes subsequential convergence and
the normal analytic structure of the limit, while the uniqueness and
optimal-degeneration assertions of Conjecture~\ref{conj:optimalshrink}
remain open.

It may be instructive to compare Theorem~\ref{thm:tangent} with the
Fano case. When $X$ is Fano and $[\omega_0]=2\pi T c_1(X)$, the Type I
rescalings correspond to the long-time behavior of the normalized Fano
flow, and the limiting shrinkers are compact. As recalled above,
Perelman's entropy, noncollapsing, scalar curvature and diameter
estimates \cite{Perelman,SesumTian} provide the basic analytic control
in that case. The Hamilton-Tian conjecture, asserting subsequential convergence
to shrinking K\"ahler-Ricci solitons with mild singularities, was
proved by Tian and Zhang \cite{TZFano} in complex dimensions at most
three. In arbitrary dimension, it was established by Bamler
\cite{BamBounded} through his theory of Ricci flows with bounded
scalar curvature, and by Chen and Wang \cite{CWII}. The partial $C^0$-estimates of Tian and Zhang \cite{TZFano},
valid in arbitrary dimension assuming Hamilton--Tian convergence,
provide the analytic input for the projective algebraic description
of the limits. Later building on the work of Chen-Sun-Wang
\cite{CSWFano} and Dervan-Sz\'ekelyhidi \cite{DervanSzekelyhidi}, Han and Li \cite{HanLi}
proved that the limit is optimal degeneration of the original manifold.
Theorem~\ref{thm:tangent} extends the existence and structure of these
compact models to the possibly noncompact tangent shrinkers of general
finite-time singularities.

The proof of Theorem~\ref{thm:tangent} draws on Bamler's entropy,
heat-kernel and compactness theory for Ricci flows
\cite{BamHeat,BamBounded,BamCompact,BamStructure}, on its K\"ahler
refinements by Hallgren and Jian \cite{HJ}, and on the complex
geometric compactness results of Jian, Song and Tian \cite{JST} and of
Hallgren, Jian, Song and Tian \cite{HJST}. The key point is that the
scalar curvature bound of Part I allows these results to be applied at
a fixed point of the original manifold. More specifically, the
shrinking $\mathbb F$ limit is supplied by Bamler's tangent-flow
theorem \cite[Theorem~2.37]{BamStructure}, applied to a conjugate heat
family based at the singular time and associated with $x$, and the
identification of its slices with $(Z,\sqrt{|s|}\,d)$ follows from
\cite[Theorem~2.18(a)-(c)]{BamStructure}. To upgrade this to pointed
Gromov-Hausdorff convergence, we use Hallgren's singular-time
heat-kernel estimates \cite[Lemma~3.2]{HallgrenEntropy} and his
metric-measure convergence criterion \cite[Proposition~2.7]{HallgrenGW},
together with the distance estimates of Bamler and Zhang \cite{BZII}. It is here that Part I supplies the required scalar curvature bounds.
Finally, the normal analytic structure, the log terminal property and
the equality of the analytic and metric regular loci follow from the
recent work of Hallgren and Zhang
\cite[Theorem~B(i),(iii); Section~2.2, case~(I)]{HZ}, and the soliton
identifications preserve this analytic structure.

Along the way, we also establish a local partial $C^0$-estimate for
the rescaled flows (Theorem~\ref{thm:partialC0}), adapting the
estimates for holomorphic sections and the peak-section construction
of \cite[Section~8]{JST}. A noteworthy feature is that the potential
constructed in Part I provides a local polarization of the rescaled
metrics near each fibre, so that no rationality assumption on the
initial class is needed.

\medskip
The paper is organized as follows. Section~\ref{sec:background}
collects the basic estimates. In Section~\ref{sec:globaldiameter}, we
prove the H\"older and global diameter bounds, and in
Section~\ref{sec:fibres}, the Type I fibre bound.
Section~\ref{sec:GH} is devoted to the Gromov-Hausdorff convergence in
Theorem~\ref{thm:GH}. Section~\ref{sec:partialC0} develops the local
partial $C^0$-estimate, and Section~\ref{sec:soliton} treats the
tangent soliton and its normal complex analytic structure.

\bigskip
\noindent\textbf{Acknowledgements.} We would like to thank Professor
Gang Tian for his generous support and encouragement over the years,
which have been a constant source of inspiration. This work was
carried out with the assistance of ChatGPT, which contributed to
discussions of the arguments and to the preparation and revision
of the manuscript. In particular, AI assistance led to a substantial
improvement of the authors' original proof of
Theorem~\ref{thm:diameter}(ii).

\bigskip

\section{Background}\label{sec:background}

After replacing $\omega(t)$ by $T^{-1}\omega(Tt)$, we may and shall
assume $T=1$. All constants below may depend on this normalization.
We retain the conventions of Part I:
\[
 g=2\omega(\,\cdot\,,J\,\cdot\,),\quad
 \partial_tg=-2\Ric_g,\quad
 \Delta=\Delta_g=\tr_\omega\ddbar,\quad
 R=R_g=\tr_\omega\Ric(\omega).
\]
Choose a smooth volume form $\Omega$ with
$\Ric(\Omega)=\omega_0-\theta$, and put
\[
 \omega_t=(1-t)\omega_0+t\theta,\qquad
 \omega(t)=\omega_t+\ddbar\varphi(t).
\]
The additive normalization of $\varphi$ is fixed by
\begin{equation}\label{eq:MA}
 \dot\varphi=\log\frac{(\omega_t+\ddbar\varphi)^n}{\Omega},
 \qquad\varphi(0)=0.
\end{equation}
Set $\lambda=1-t$ and $\square=\partial_t-\Delta$.

\begin{proposition}\label{prop:background}
There exists a constant $C>0$ such that, for all $0\le t<1$,
\begin{align}
 &\|\varphi(t)\|_\infty\le C,\qquad \dot\varphi\le C,
       \qquad 0\le\tr_{\omega(t)}\theta\le C,\label{eq:basic}\\
 &-C\le R(\cdot,t)\le\frac{C}{1-t}.\label{eq:scalarI}
\end{align}
For $U=(1-t)\dot\varphi+\varphi+nt$,
\begin{align}
 &\ddbar U=\omega-\theta-(1-t)\Ric(\omega),\label{eq:Ucurv}\\
 &U_t=n-(1-t)R,\quad \square U=\tr_\omega\theta,
 \quad |U|+|\partial U|_\omega^2\le C.\label{eq:Ubasic}
\end{align}
Moreover,
\begin{equation}\label{eq:Qidentity}
 \square(t\dot\varphi-\varphi)=n-\tr_\omega\omega_0,
 \qquad \ddot\varphi=-R.
\end{equation}
\end{proposition}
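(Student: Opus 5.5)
The proposition collects known estimates plus direct computations. The plan is to separate the identities, which are pure computation from \eqref{eq:MA}, from the bounds, which are the standard parabolic Schwarz and maximum-principle estimates together with the Type I scalar curvature bound of Part I \cite{JSTypeI}.

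\emph{Identities.} Differentiating \eqref{eq:MA} in $t$ gives
\[
 \ddot\varphi=\tr_\omega(\partial_t\omega)=\tr_\omega(-\Ric(\omega))=-R,
\]
since $\partial_t\omega=\theta-\omega_0+\ddbar\dot\varphi$ and $\ddbar\dot\varphi=-\Ric(\omega)+\Ric(\Omega)=-\Ric(\omega)+\omega_0-\theta$. The same relation yields
\[
 \Delta\dot\varphi=-R+\tr_\omega(\omega_0-\theta),\qquad
 \Delta\varphi=n-\tr_\omega\omega_t .
\]
For $t\dot\varphi-\varphi$ we then compute
\[
 \partial_t(t\dot\varphi-\varphi)=t\ddot\varphi=-tR,\qquad
 \Delta(t\dot\varphi-\varphi)=-tR+t\tr_\omega(\omega_0-\theta)-n+\tr_\omega\omega_t,
\]
and subtracting, using $\omega_t=(1-t)\omega_0+t\theta$, gives $n-\tr_\omega\omega_0$. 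For $U$,
\[
 \ddbar U=(1-t)\bigl(\omega_0-\theta-\Ric(\omega)\bigr)+\omega-\omega_t=\omega-\theta-(1-t)\Ric(\omega),
\]
and $U_t=-\dot\varphi+(1-t)\ddot\varphi+\dot\varphi+n=n-(1-t)R$. Taking the trace of $\ddbar U$ gives $\Delta U=n-\tr_\omega\theta-(1-t)R$, so $\square U=\tr_\omega\theta$.

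\emph{Bounds.}
\begin{itemize}
\item $\dot\varphi\le C$: apply the maximum principle to $t\dot\varphi-\varphi-nt$, which satisfies $\square\le 0$. This gives $t\dot\varphi\le\varphi+C$.
\item Upper bound on $\varphi$: this follows from $\dot\varphi\le C$ for $t\le 1/2$. Combining the two gives $\dot\varphi\le C$ for all $t$, and hence $\varphi\le C$.
\item Lower bound on $\varphi$: use the classical argument with the semipositive reference $\omega_t\ge(1-t)\omega_0$ and a volume comparison. Since the limiting class is nef but possibly collapsing, invoke Kołodziej/Eyssidieux--Guedj--Zeriahi-type $L^\infty$ estimates. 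Alternatively, bound $(1-t)\dot\varphi+\varphi$ from below by the maximum principle applied to $\square U=\tr_\omega\theta\ge0$, so $U\ge\min U(0)$. Then $\varphi\ge -C$ follows from the inequality $\partial_t(\varphi/(1-t))\ge -C/(1-t)^2$, which integrates to a bound; this needs care near $t=1$.
\item $\tr_\omega\theta\le C$: this is the parabolic Schwarz lemma for $\Phi$, valid since $\theta$ is a pullback of a Kähler form on $Y$ (realized locally by ambient potentials). Apply the maximum principle to $\log\tr_\omega\theta-A\varphi'$ for a suitable bounded potential.
\item $R\ge -C$: this is the standard maximum principle for $\square R=|\Ric|^2\ge R^2/n$.
\item $R\le C/(1-t)$ and $|U|+|\partial U|^2_\omega\le C$: these are exactly the Type I results of Part I \cite{JSTypeI}, which I will quote.
\end{itemize}

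The main obstacle is the gradient bound on $U$ together with the Type I bound on $R$. This is the content of Part I, so here it is invoked rather than reproved. Within the present proof the only delicate point is the uniform lower bound for $\varphi$ in the collapsing case, for which I would rely on the known $C^0$ estimate for the flow (Song--Tian, Zhang) rather than reprove it.
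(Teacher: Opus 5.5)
Your proposal is correct, and its skeleton matches the paper's proof. The identities are checked by the same direct computation from \eqref{eq:MA}; the paper records $\square\dot\varphi=\tr_\omega(\theta-\omega_0)$ and $\square\varphi=\dot\varphi-n+\tr_\omega\omega_t$ and takes the combination. The $C^0$ and Schwarz bounds, $R\ge-C$, the Type I bound and the estimates for $U$ are quoted from Part I.

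You differ only in re-deriving the elementary bounds, and two of those derivations need adjusting.

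\emph{The bound $\dot\varphi\le C$.} The paper uses one line: $\ddot\varphi=-R\le C$, so $\dot\varphi(t)\le\dot\varphi(0)+Ct$. Your route through $\square(t\dot\varphi-\varphi-nt)=-\tr_\omega\omega_0\le0$ also works. However, ``combining the two'' as stated only closes on $[0,1/2]$. To continue past $t=1/2$ you need the pointwise step $\partial_t(\varphi/t)=(t\dot\varphi-\varphi)/t^2\le n/t$, which bounds $\varphi$ first and then $\dot\varphi$. Since you prove $R\ge-C$ anyway, the paper's route is simpler.

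\emph{The bound $\varphi\ge-C$.} Drop the Ko\l{}odziej/EGZ option: it does not apply off the shelf when the reference volumes $\omega_t^n$ collapse. Keep the $U$-argument instead. Since $\square U=\tr_\omega\theta\ge0$, you get $(1-t)\dot\varphi+\varphi\ge-C$. Integrating $\partial_t\bigl(\varphi/(1-t)\bigr)\ge-C(1-t)^{-2}$ from $0$ then gives $\varphi(t)\ge-Ct$, with no difficulty at $t=1$.

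\emph{The Schwarz argument.} The bounded potential should be $U$ itself, since $\square(\log\tr_\omega\theta-AU)\le(C-A)\tr_\omega\theta$. By contrast, $\square\varphi$ contains $\dot\varphi$, which is unbounded below.

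With these choices your argument also makes $|U|\le C$ self-contained. Only the Type I bound, $|\partial U|_\omega\le C$, and the Chern--Lu inequality on the possibly singular target then need to be imported.
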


\begin{proof}
The $C^0$ estimate, the Schwarz estimate, and the scalar curvature
lower bound follow from \cite[Proposition~2.1]{JSTypeI}. The scalar
curvature upper bound is \cite[Theorem~1.1]{JSTypeI}, and the identities
and estimates for $U$ are given in \cite[Lemma~4.1]{JSTypeI}, with the
same normalization.
Differentiating \eqref{eq:MA} gives $\ddot\varphi=-R$, and the lower
bound for the scalar curvature then yields
$\dot\varphi(t)\le\dot\varphi(0)+Ct\le C$.
Finally,
\[
 \square\dot\varphi=\tr_\omega(\theta-\omega_0),\qquad
 \square\varphi=\dot\varphi-n+\tr_\omega\omega_t,
\]
which imply the remaining identity in \eqref{eq:Qidentity}.
\end{proof}

For a point $(p,t)$ with $t<1$ and an earlier time $0\le s<t$, write
\[
 d\nu_{p,t;s}(z)=K(p,t;z,s)\,dV_{g(s)}(z)
       =\bigl(4\pi(t-s)\bigr)^{-n}e^{-f_{p,t;s}(z)}\,dV_{g(s)}(z).
\]
Here $K$ is the conjugate heat kernel:
\[
 (-\partial_s-\Delta_{g(s)}+R(\cdot,s))K(p,t;\cdot,s)=0,
\]
\[
 \int_Xd\nu_{p,t;s}=1,\qquad
 \nu_{p,t;s}\rightharpoonup\delta_p\quad(s\rightarrow t).
\]
Perelman's entropy estimate gives the following uniform bounds for the
pointed Nash entropy (see \cite[Lemma~2.1]{JSTypeI} and \cite{Perelman}):
\[
 \mathcal N_{p,t}(t-s):=\int_X f_{p,t;s}\,d\nu_{p,t;s}-n,
 \qquad -A\le\mathcal N_{p,t}(t-s)\le0.
\]

We next recall the metric notions needed below. For probability measures
$\mu,\nu$ on a compact metric space $(M,d)$, their Wasserstein distance
of order one is
\begin{align*}
 d_{W_1}(\mu,\nu)
   &:=\inf_{\Lambda\in\Pi(\mu,\nu)}
       \int_{M\times M}d(z,w)\,d\Lambda(z,w)\\
   &=\sup_{\operatorname{Lip}_d u\le1}
      \left|\int_Mu\,d\mu-\int_Mu\,d\nu\right|,
\end{align*}
where $\Pi(\mu,\nu)$ is the set of probability measures with marginals
$\mu$ and $\nu$. Following \cite[Definition~3.10]{BamHeat}, set
\[
 H_{2n}=\frac{(2n-1)\pi^2}{2}+4.
\]
A point $(z_s,s)$ is an $H_{2n}$-center of $(p,t)$ if
\[
 \int_X d_s(z_s,z)^2\,d\nu_{p,t;s}(z)\le H_{2n}(t-s).
\]
Such a center exists on every earlier slice
\cite[Proposition~3.12]{BamHeat}. In fact, the variance estimate
\cite[Corollary~3.8]{BamHeat} bounds the average of the left-hand side
with respect to $d\nu_{p,t;s}(z_s)$ by $H_{2n}(t-s)$, so at least one
point attains this bound.

Bamler's Gaussian upper bound \cite[Theorem~7.2]{BamHeat}, together
with the entropy bound and $R\ge-C$, gives
\begin{equation}\label{eq:heatupper}
 K(p,t;z,s)\le C(t-s)^{-n}
       \exp\left(-\frac{d_s(z_s,z)^2}{C(t-s)}\right).
\end{equation}
The constants here are uniform, since $t-s\le1$ and the lower bound
for the scalar curvature therefore remains uniform after rescaling the
interval. Moreover, backward heat propagation contracts the
Wasserstein distance
\cite[Lemma~2.7]{BamHeat}:
\begin{equation}\label{eq:Wcontract}
 d_{W_1}^{g(s)}(\nu_{p,t;s},\nu_{q,t;s})\le d_t(p,q).
\end{equation}

Perelman's reduced distance from $(p,t)$ to $(z,s)$ is
\[
 \ell_{p,t}(z,s)=\frac{1}{2\sqrt{t-s}}\inf_{\zeta}
 \int_0^{t-s}\sqrt r\left(
    |\dot\zeta(r)|_{g(t-r)}^2+R(\zeta(r),t-r)\right)\,dr,
\]
where the infimum is over piecewise smooth curves with $\zeta(0)=p$
and $\zeta(t-s)=z$. The reduced-density comparison states
\[
 f_{p,t;s}(z)\le\ell_{p,t}(z,s),\qquad
 K(p,t;z,s)\ge\bigl(4\pi(t-s)\bigr)^{-n}e^{-\ell_{p,t}(z,s)};
\]
see \cite[Section~7 and Corollary~9.5]{Perelman}.

Finally, for $\tau_i\rightarrow0$ and a compact interval
$[a,b]\subset(-\infty,0)$, the rescaled flow
$g_i(s)=\tau_i^{-1}g(1+\tau_i s)$ satisfies
\begin{equation}\label{eq:rescaledscalar}
 -C\tau_i\le R_{g_i(s)}\le C/|s|.
\end{equation}
Indeed, scalar curvature rescales as
$R_{g_i(s)}=\tau_i R_{g(1+\tau_i s)}$, so this follows directly from
\cite[Theorem~1.1 and Proposition~2.1]{JSTypeI}.
Since $n\ge2$, Ye's finite-time Sobolev inequality
\cite[Theorem~D$^*$]{YeSobolev}, after this rescaling, reads
\[
 \left(\int |v|^{2n/(n-1)}dV_{g_i}\right)^{(n-1)/n}
 \le A\int\left(|dv|_{g_i}^2+\tfrac14R_{g_i}v^2\right)dV_{g_i}
       +B\tau_i\int v^2dV_{g_i}.
\]
Thus \eqref{eq:rescaledscalar} gives, uniformly for $s\in[a,b]$,
\begin{equation}\label{eq:sobolev}
 \left(\int |v|^{2\chi}dV_{g_i}\right)^{1/\chi}
 \le C\int\bigl(|dv|_{g_i}^2+|v|^2\bigr)dV_{g_i},
 \qquad \chi=\frac{n}{n-1}.
\end{equation}
This inequality will be applied locally, to functions supported in
balls of fixed radius. On these rescaled intervals, we shall also use
the volume noncollapsing and noninflating estimates at unit scale,
together with compactness under a scalar curvature bound
\cite{BZII,BamBounded,HallgrenEntropy}. These estimates hold in both
the volume-collapsing and the noncollapsed cases.

\Needspace{9\baselineskip}
\section{The global diameter bound}\label{sec:globaldiameter}

In this section, we prove the global diameter bound of
Theorem~\ref{thm:diameter}(i). The strategy is to show that the
K\"ahler potentials are uniformly H\"older continuous with respect to
the fixed metric $\omega_0$, and then to convert this into a uniform
H\"older bound for the distance functions $d_t$. We begin with the
behavior of the potentials in time.

\begin{lemma}\label{lem:timemodulus}
There exist a continuous function $\varphi_T$ on $X$ and a constant
$C>0$ such that, for all $0\le t<1$,
\begin{align}
 -C\bigl(1+|\log(1-t)|\bigr)&\le\dot\varphi(t)\le C,
       \label{eq:dotlog}\\
 \|\varphi(t)-\varphi_T\|_\infty
       &\le C(1-t)\bigl(1+|\log(1-t)|\bigr).
       \label{eq:timemodulus}
\end{align}
In particular, for every $\beta\in(0,1)$ there exists $C_\beta>0$ such
that, for all $0\le s,t<1$,
\begin{equation}\label{eq:timebeta}
 \|\varphi(t)-\varphi(s)\|_\infty\le C_\beta|t-s|^\beta.
\end{equation}
\end{lemma}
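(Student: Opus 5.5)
The plan is to derive all three estimates from Proposition~\ref{prop:background}: first the two-sided bound \eqref{eq:dotlog} on $\dot\varphi$, then integrate it in time. The upper bound $\dot\varphi\le C$ is already contained in \eqref{eq:basic}. For the lower bound we use the function $U=(1-t)\dot\varphi+\varphi+nt$, for which \eqref{eq:Ubasic} gives $|U|\le C$. Since $\|\varphi\|_\infty\le C$, this only yields $(1-t)\dot\varphi\ge -C$, i.e.\ $\dot\varphi\ge -C/(1-t)$, which is too weak by a logarithm. To recover the logarithm we exploit the time derivative $U_t=n-(1-t)R$ in a different way. Differentiating $U$ in time gives
\[
 U_t=-\dot\varphi+(1-t)\ddot\varphi+\dot\varphi+n=n-(1-t)R,
\]
which is consistent with $\ddot\varphi=-R$ but adds nothing new. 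The natural identity to use instead is
\[
 \frac{d}{dt}\Bigl(\frac{\varphi}{1-t}\Bigr)\cdot(1-t)^2
   =(1-t)\dot\varphi+\varphi=U-nt .
\]
Hence $\partial_t\bigl(\varphi/(1-t)\bigr)=(U-nt)/(1-t)^2$, which is bounded by $C/(1-t)^2$. This is also too weak if integrated naively.

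The better route is pointwise in $x$. Fix $x$ and set $h(t)=\dot\varphi(x,t)$. From $U$ bounded and $\varphi$ bounded we have $\partial_t\varphi=(U-\varphi-nt)/(1-t)$, with a bounded numerator. We write $\varphi(t)-\varphi(s)=\int_s^t \frac{U-\varphi-nt'}{1-t'}\,dt'$. To obtain convergence of $\varphi$ we want the numerator $V:=U-\varphi-nt=(1-t)\dot\varphi$ to tend to $0$ at rate $(1-t)|\log(1-t)|$. We do this with a maximum-principle argument on $H=(1-t)\dot\varphi$ plus a correction. Using $\square\dot\varphi=\tr_\omega(\theta-\omega_0)$ and \eqref{eq:Qidentity}, we compute
\[
 \square\bigl((1-t)\dot\varphi+ A(t\dot\varphi-\varphi)\bigr)
   =-\dot\varphi+(1-t)\tr_\omega(\theta-\omega_0)+A(n-\tr_\omega\omega_0).
\]
With $A$ large and $\tr_\omega\theta\le C$, the $\tr_\omega\omega_0$ terms have a good sign. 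One then compares with the barrier $-C(1-t)\log(1-t)$ and applies the maximum principle (the minimum is attained where $-\dot\varphi$ is large, and the $-\dot\varphi$ term then helps). The expected outcome is $\dot\varphi\ge -C(1+|\log(1-t)|)$, possibly after first passing through the intermediate quantity $\dot\varphi+\log$-type terms. This is the step I expect to be the main obstacle: we must choose the auxiliary combination so that the bad term $-\dot\varphi$ and the $\tr_\omega\omega_0$ term close up. If this fails, the fallback is the standard Song--Tian style argument: apply the maximum principle to $\dot\varphi-\log$ corrections of $(1-t)$, using $\square\dot\varphi\ge -\tr_\omega\omega_0$ together with $\square\varphi$.

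Given \eqref{eq:dotlog}, the remaining assertions are elementary. The bound $|\dot\varphi|\le C(1+|\log(1-t)|)$ is integrable on $[0,1)$, so $\varphi(t)$ converges uniformly to a limit $\varphi_T$, which is continuous as a uniform limit of smooth functions. Integrating, $\|\varphi(t)-\varphi_T\|_\infty\le\int_t^1C(1+|\log(1-r)|)\,dr\le C(1-t)(1+|\log(1-t)|)$, which is \eqref{eq:timemodulus}. Similarly, for $s<t$,
\[
 \|\varphi(t)-\varphi(s)\|_\infty\le\int_s^t C\bigl(1+|\log(1-r)|\bigr)\,dr\le C|t-s|\bigl(1+|\log|t-s||\bigr),
\]
where the last inequality holds because $|\log(1-r)|$ is increasing in $r$ and the integral is largest when $t=1$. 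Since $x|\log x|\le C_\beta x^\beta$ for $x\in(0,1]$, this gives \eqref{eq:timebeta}.
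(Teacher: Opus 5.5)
There is a genuine gap. The lower bound in \eqref{eq:dotlog} is the only nontrivial content of the lemma, and you never prove it. You write down $\ddot\varphi=-R$ and set it aside as adding nothing new, but you never use the Type I upper bound $R\le C/(1-t)$ from \eqref{eq:scalarI}. That bound is exactly the input from Part I on which this section rests. With it, the lower bound takes one line: integrating $\ddot\varphi=-R$ in time at each fixed point gives
\[
 \dot\varphi(t)=\dot\varphi(0)-\int_0^tR\,du\ge\dot\varphi(0)+C\log(1-t),
\]
and $\dot\varphi(0)=\log(\omega_0^n/\Omega)$ is bounded. This is the paper's proof. Your remaining steps agree with the paper's: uniform convergence to $\varphi_T$, \eqref{eq:timemodulus}, and \eqref{eq:timebeta} via monotonicity of $1+|\log(1-u)|$.

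The maximum-principle substitute you sketch does not close, and the problem is a sign, not a missing detail.
\begin{enumerate}
\item For $Q=(1-t+At)\dot\varphi-A\varphi$, your own computation puts $-(A+1-t)\tr_\omega\omega_0$ into $\square Q$. That is the sign suited to upper bounds, as in Proposition~\ref{prop:polynomial}. At a spatial minimum, $\square Q\le0$ is compatible with arbitrarily large $\tr_\omega\omega_0$.
\item Since $\tr_\omega\omega_0\ge n e^{-\dot\varphi/n}(\omega_0^n/\Omega)^{1/n}$ grows exponentially in $-\dot\varphi$, the linear term $-\dot\varphi$ cannot compensate. Adding the barrier $-C(1-t)\log(1-t)$ does not change this.
\item Reversing the sign of $A$ gives the trace term the favourable sign and yields $(1-t-At)\dot\varphi+A\varphi\ge-C$. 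But for $t$ near $1$ the coefficient $1-t-At$ is negative, so this is an upper bound on $\dot\varphi$, not a lower one.
\item Your fallback with $\dot\varphi+B\varphi$ has trace coefficient $B(1-t)-1$. That forces $B\gtrsim(1-t)^{-1}$, and then it only reproduces the bound $\dot\varphi\ge-C/(1-t)$ you already obtained from $U$.
\end{enumerate}
What is missing is precisely the Type I scalar curvature estimate.
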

\begin{proof}
Integrating $\ddot\varphi=-R$ at each fixed point and using
\eqref{eq:scalarI}, we obtain
$\dot\varphi(t)\ge\dot\varphi(0)+C\log(1-t)$, while the upper bound
was proved in Proposition~\ref{prop:background}. Since the resulting
bound for $|\dot\varphi|$ is integrable in time and uniform in space,
$\varphi(t)$ is Cauchy in the uniform norm and converges to a
continuous function $\varphi_T$. Integrating from $t$ to $1$ then
proves \eqref{eq:timemodulus}. For
$0\le s<t<1$, the function $u\mapsto1+|\log(1-u)|$ is increasing, so
\begin{align*}
 \|\varphi(t)-\varphi(s)\|_\infty
 &\le C\int_s^t(1+|\log(1-u)|)\,du\\
 &\le C|t-s|\bigl(2+|\log|t-s||\bigr)
 \le C_\beta|t-s|^\beta.
\end{align*}
The case $s=t$ is immediate, and the estimate for $s>t$ follows by symmetry.
\end{proof}

Next, we show that the metrics along the flow degenerate at most
polynomially with respect to the background metric $\omega_0$.

\begin{proposition}\label{prop:polynomial}
There exist $N\ge1$ and $C>0$ such that
\begin{equation}\label{eq:polynomial}
 \tr_{\omega_0}\omega(t)\le C(1-t)^{-N},\qquad 0\le t<1.
\end{equation}
\end{proposition}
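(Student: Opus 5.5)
The plan is a parabolic Schwarz-type (Chern--Lu/Aubin--Yau) maximum principle argument for $\log\tr_{\omega_0}\omega(t)$. The auxiliary term is built from the quantity $t\dot\varphi-\varphi$ of Proposition~\ref{prop:background}. The lower bound \eqref{eq:dotlog} of Lemma~\ref{lem:timemodulus} converts the resulting estimate into a polynomial bound. The point is that $t\dot\varphi-\varphi$ is the natural quantity whose heat operator produces $-\tr_\omega\omega_0$.

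\emph{Step 1: differential inequality for the trace.} The first step is the standard computation along $\partial_t\omega=-\Ric(\omega)$, with $\omega_0$ fixed. If $-B$ is a lower bound for the bisectional curvature of $\omega_0$, this computation gives
\[
 \square\log\tr_{\omega_0}\omega\le B\,\tr_\omega\omega_0 .
\]
The Ricci terms cancel exactly as in the usual parabolic Schwarz lemma, and the gradient term is absorbed by the Cauchy--Schwarz inequality of Yau's computation.

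\emph{Step 2: choice of auxiliary quantity.} Set
\[
 H=\log\tr_{\omega_0}\omega+A\,(t\dot\varphi-\varphi),\qquad A=B+1 .
\]
By \eqref{eq:Qidentity}, $\square(t\dot\varphi-\varphi)=n-\tr_\omega\omega_0$, so
\[
 \square H\le -\tr_\omega\omega_0+An .
\]
Suppose the maximum of $H$ on $X\times[0,t_0]$ is attained at some $(x_0,t_1)$ with $t_1>0$. There $\tr_\omega\omega_0\le An$. Combining this with the elementary inequality
\[
 \tr_{\omega_0}\omega\le \frac{1}{(n-1)!}\,(\tr_\omega\omega_0)^{n-1}\,\frac{\omega^n}{\omega_0^n}
\]
and $\omega^n=e^{\dot\varphi}\Omega$ from \eqref{eq:MA}, together with $\dot\varphi\le C$ from \eqref{eq:basic}, gives $\tr_{\omega_0}\omega(x_0,t_1)\le C$. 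Since $t\dot\varphi\le C$ and $\|\varphi\|_\infty\le C$, we then get $H(x_0,t_1)\le C$. If instead the maximum occurs at $t=0$, then $H$ is simply bounded by its initial value. Either way $\sup_{X\times[0,1)}H\le C$.

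\emph{Step 3: conversion to a polynomial bound.} Unwinding the definition of $H$ gives
\[
 \log\tr_{\omega_0}\omega\le C-At\dot\varphi+A\varphi\le C+AC\bigl(1+|\log(1-t)|\bigr),
\]
using the lower bound on $\dot\varphi$ in \eqref{eq:dotlog} and $\|\varphi\|_\infty\le C$. Exponentiating yields \eqref{eq:polynomial} with $N=AC$.

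The only genuinely delicate point is the sign bookkeeping in Step~2. One must add $+A(t\dot\varphi-\varphi)$, not subtract it, so that the good term $-\tr_\omega\omega_0$ dominates. The price is that $H$ then contains $-At\dot\varphi$, which is not uniformly bounded. The logarithmic lower bound \eqref{eq:dotlog} controls it, and this is exactly why the final estimate is only polynomial in $(1-t)^{-1}$ rather than uniform.
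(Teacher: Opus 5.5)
Your proposal is correct and follows the paper's proof essentially step for step. Both use the same auxiliary quantity $\log\tr_{\omega_0}\omega+A(t\dot\varphi-\varphi)$ with $A$ exceeding the curvature constant by one, and the identity \eqref{eq:Qidentity}. Both bound $\tr_\omega\omega_0$ at a positive-time maximum and combine it with the eigenvalue inequality and $\prod_j\lambda_j=e^{\dot\varphi}\Omega/\omega_0^n\le C$. Both convert the result into a polynomial bound through \eqref{eq:dotlog}. Your factor $1/(n-1)!$ in the eigenvalue inequality is sharper than the paper's factor $n$, but this changes nothing.
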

\begin{proof}
Let $S=\tr_{\omega_0}\omega$. The parabolic trace estimate with respect to
the fixed smooth metric $\omega_0$ gives
\[
 \square\log S\le C_0\tr_\omega\omega_0.
\]
Choose $A>C_0+1$ and set
\[
 Q=\log S+A(t\dot\varphi-\varphi).
\]
By \eqref{eq:Qidentity},
\begin{equation}\label{eq:traceQ}
 \square Q\le-(A-C_0)\tr_\omega\omega_0+An.
\end{equation}
We apply the maximum principle to $Q$ on $X\times[0,t_1]$, for
arbitrary $t_1<1$. At a maximum of $Q$ occurring at positive time,
\eqref{eq:traceQ} bounds $\tr_\omega\omega_0$ uniformly.
If $\lambda_1,\ldots,\lambda_n$ are the eigenvalues of $\omega$ relative
to $\omega_0$, then
\[
 S\le n\left(\sum_j\lambda_j^{-1}\right)^{n-1}
                  \prod_j\lambda_j.
\]
Moreover,
$\prod_j\lambda_j=e^{\dot\varphi}\Omega/\omega_0^n\le C$.
Thus $S\le C$ at such a maximum, and since
$t\dot\varphi-\varphi\le C$, it follows that $Q\le C$ there. The
value of $Q$ at time zero is clearly bounded as well. Since all
constants are independent of $t_1$, we conclude that
\[
 \log S\le C+A\varphi-At\dot\varphi
            \le C+N|\log(1-t)|
\]
by \eqref{eq:dotlog}. This proves \eqref{eq:polynomial}, with an
exponent $N$ depending only on the initial data.
\end{proof}

Interpolating between this polynomial bound in space and the modulus
of continuity in time, we obtain a uniform H\"older estimate for the
potentials.

\begin{proposition}\label{prop:holder}
There exist $\gamma\in(0,1)$ and $C>0$ such that
\begin{equation}\label{eq:holderpotential}
 |\varphi(p,t)-\varphi(q,t)|\le C d_0(p,q)^\gamma
 \qquad(p,q\in X,\ 0\le t<1).
\end{equation}
The same estimate holds for $\varphi_T$.
\end{proposition}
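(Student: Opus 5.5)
The idea is to interpolate between the time modulus \eqref{eq:timebeta} and the polynomial spatial bound \eqref{eq:polynomial}. For each time the potential will be compared with a slightly earlier time, where it is Lipschitz with a controlled constant. Fix $p,q\in X$ with $\delta:=d_0(p,q)$ small; for $\delta$ bounded below the estimate is immediate from $\|\varphi\|_\infty\le C$. Given $t<1$, choose an auxiliary time
\[
 s=\min\{t,\,1-\delta^{a}\}
\]
with an exponent $a>0$ to be fixed. Split
\[
 |\varphi(p,t)-\varphi(q,t)|\le|\varphi(p,t)-\varphi(p,s)|+|\varphi(p,s)-\varphi(q,s)|+|\varphi(q,s)-\varphi(q,t)|.
\]
The two outer terms are at most $C_\beta|t-s|^\beta\le C_\beta\delta^{a\beta}$ by \eqref{eq:timebeta}, since $0\le t-s\le 1-s\le\delta^a$.

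The middle term needs a gradient bound for $\varphi(s)$ with respect to $\omega_0$ that blows up only polynomially in $(1-s)^{-1}$. A bound on $\ddbar\varphi$ gives this for free. Since $\omega_s+\ddbar\varphi(s)=\omega(s)>0$ and $\omega_s\le C\omega_0$, Proposition~\ref{prop:polynomial} yields
\[
 -C\omega_0\le\ddbar\varphi(s)\le C(1-s)^{-N}\omega_0.
\]
Hence $|\Delta_{\omega_0}\varphi(s)|\le C(1-s)^{-N}$, and $\|\varphi(s)\|_\infty\le C$. Standard elliptic estimates for $\Delta_{\omega_0}$ on the fixed compact manifold $(X,\omega_0)$ then give $\|\varphi(s)\|_{C^{1,\alpha}(\omega_0)}\le C(1-s)^{-N}$, and in particular $|\nabla_{\omega_0}\varphi(s)|\le C(1-s)^{-N}$. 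Integrating along an $\omega_0$-geodesic gives
\[
 |\varphi(p,s)-\varphi(q,s)|\le C(1-s)^{-N}\delta\le C\delta^{1-aN}.
\]
This holds both when $s=1-\delta^a$ and when $s=t\le1-\delta^a$.

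To conclude, choose $a=1/(2N)$ and any $\beta\in(0,1)$, say $\beta=1/2$. This yields \eqref{eq:holderpotential} with $\gamma=\min\{a\beta,1-aN\}=1/(4N)$, uniformly in $t$. The statement for $\varphi_T$ follows by letting $t\to1$, using the uniform convergence of Lemma~\ref{lem:timemodulus}.

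The only step needing care is the Lipschitz bound for $\varphi(s)$ with polynomial blow-up. I expect it to be routine via the Laplacian bound and $L^p$ or Schauder theory for the fixed operator $\Delta_{\omega_0}$. If one prefers to avoid elliptic regularity, a cruder route also works: the two-sided bound on $\ddbar\varphi(s)$ makes $\varphi(s)+C|z|^2$ plurisubharmonic in local charts. For a psh function, $\varphi(s)$ is bounded with bounded Laplacian on a ball of radius $r$, and the oscillation on a ball of radius $\delta$ is controlled by $C(1-s)^{-N}\delta+C\delta/r$, which also suffices.
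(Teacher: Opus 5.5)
Your proposal is correct and is essentially the paper's proof. The paper also obtains $\|d\varphi(t)\|_{g_0,\infty}\le C(1-t)^{-N}$ from the Laplacian bound via $W^{2,p}$ estimates and Sobolev embedding, and then interpolates with the time modulus through the intermediate time $s=\min\{t,1-\delta\}$ with $\delta=d^{1/(N+\beta)}$. The only difference is that the paper balances the exponents to get $\gamma=\beta/(N+\beta)$, while your choice $a=1/(2N)$ gives the smaller but equally valid exponent $\gamma=1/(4N)$.
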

\begin{proof}
The Laplacian with respect to the fixed background metric satisfies
\[
 \Delta_{\omega_0}\varphi
       =\tr_{\omega_0}\omega(t)-\tr_{\omega_0}\omega_t,
 \qquad
 \|\Delta_{\omega_0}\varphi(t)\|_\infty\le C(1-t)^{-N}.
\]
Global elliptic $W^{2,p}$ estimates for $p>2n$, followed by Sobolev
embedding, therefore give
\begin{equation}\label{eq:backgroundgradient}
 \|d\varphi(t)\|_{g_0,\infty}\le C(1-t)^{-N}.
\end{equation}
To combine this spatial estimate with the temporal modulus of
continuity, fix $\beta=1/2$ and put $d=d_0(p,q)$. Since pairs with
$d\ge d_*$ are controlled by the uniform $C^0$ bound, it suffices to
consider $0<d<d_*$, where $d_*>0$ is a sufficiently small fixed
constant. Set
\[
 \delta=d^{1/(N+\beta)},\qquad s=\min\{t,1-\delta\}.
\]
If $s=t$, \eqref{eq:backgroundgradient} bounds the difference by
$C\delta^{-N}d$. Otherwise \eqref{eq:timebeta} and
\eqref{eq:backgroundgradient} give
\[
 |\varphi(p,t)-\varphi(q,t)|
 \le2\|\varphi(t)-\varphi(s)\|_\infty
           +|\varphi(p,s)-\varphi(q,s)|
 \le C\delta^\beta+C\delta^{-N}d.
\]
With this choice of $\delta$, both terms are bounded by $Cd^\gamma$, where
$\gamma=\beta/(N+\beta)>0$. Finally, the assertion for $\varphi_T$
follows by passing to the uniform limit.
\end{proof}

We can now convert the H\"older bound for the potentials into a
H\"older bound for the distances, with a constant uniform in time.

\begin{theorem}\label{thm:distanceholder}
Let $\gamma$ be the exponent in Proposition~\ref{prop:holder}. There
exists a constant $C>0$ such that, for all $p,q\in X$,
\begin{equation}\label{eq:distanceholder}
 d_t(p,q)\le C d_0(p,q)^{\gamma/2},
 \qquad\diam(X,g(t))\le C,
 \qquad0\le t<1.
\end{equation}
\end{theorem}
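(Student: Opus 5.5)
Following the strategy announced in the introduction, I will compare $\omega(t)$ with auxiliary K\"ahler metrics in the fixed class $[\omega_0]$, for which the geometric H\"older estimates of Guo, Song and Sturm \cite{GSS} hold uniformly. Set
\[
 \eta_t=\omega_0+\ddbar\psi_t,\qquad \psi_t=\varphi(t),
\]
so that
\[
 \eta_t=\omega(t)+t(\omega_0-\theta)\ \ge\ \omega(t)
\]
when $\omega_0\ge\theta$. In general I will instead take $\eta_t=\omega(t)+A(\omega_0-t\,\theta/A)$-type corrections, or simply $\eta_t=\omega(t)+C_1\omega_0$. The latter is K\"ahler, lies in the class $[\omega_t]+C_1[\omega_0]$, and this class is uniformly equivalent to a fixed K\"ahler class because $\theta$ is semipositive and bounded. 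The cleanest choice is therefore
\[
 \eta_t=\omega(t)+\omega_0=(2-t)\omega_0+t\theta+\ddbar\varphi(t).
\]
Its reference forms $\widehat\omega_t=(2-t)\omega_0+t\theta$ satisfy $\omega_0\le\widehat\omega_t\le C\omega_0$, uniformly in $t$. Since $\eta_t\ge\omega(t)$, we get $d_t\le d_{\eta_t}$. So it suffices to prove $d_{\eta_t}(p,q)\le C d_0(p,q)^{\gamma/2}$ uniformly in $t$.

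For $\eta_t$ I will verify the hypotheses of \cite{GSS}, whose distance H\"older estimate requires:
\begin{itemize}
\item[(a)] a uniform bound on $\|\varphi\|_\infty$, which holds by \eqref{eq:basic};
\item[(b)] a uniform $L^p$ (or entropy) bound, $p>1$, on the volume density $\eta_t^n/\omega_0^n$.
\end{itemize}
For (b), expand
\[
 \eta_t^n=\sum_k\binom nk\omega(t)^k\wedge\omega_0^{n-k}.
\]
Each term is bounded by $C\,(\tr_{\omega_0}\omega)^{k}$, and that is not uniformly controlled. Instead I will use $\tr_{\omega}\omega_0$ together with $\omega^n=e^{\dot\varphi}\Omega\le C\Omega$. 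The mixed terms satisfy
\[
 \omega^k\wedge\omega_0^{n-k}\le C\,\omega^n\,(\tr_\omega\omega_0)^{n-k}.
\]
Their integrals are therefore controlled by $\int(\tr_\omega\omega_0)^{n-k}\omega^n$, which is not obviously bounded. This is where I expect the \textbf{main obstacle}.

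The plan at this point is to exploit that \cite{GSS} needs only a relative entropy bound
\[
 \int\log(\eta^n/\omega_0^n)\,\eta^n\le C.
\]
Since $\eta^n\le C(\tr_{\omega_0}\eta)^n\omega_0^n$ and $\log\tr_{\omega_0}\eta\le C+N|\log(1-t)|$ by Proposition~\ref{prop:polynomial}, this alone is not uniform. An alternative is to use the $L^\infty$ H\"older modulus of $\varphi$ from Proposition~\ref{prop:holder} directly. I would then replace $\eta_t$ by the regularized envelope
\[
 \tilde\eta_t=\widehat\omega_t+\ddbar\tilde\varphi_t,
\]
where $\tilde\varphi_t$ solves the Monge--Amp\`ere equation $\tilde\eta_t^n=c_t\,\omega_0^n$ with a smooth right-hand side, and compare $\omega(t)\le\tilde\eta_t$ on a suitable level. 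Since this comparison is unclear, my first attempt will instead be a direct geometric argument. Connect $p,q$ by an $\omega_0$-geodesic $\sigma$ of length $d=d_0(p,q)$ and bound the $g(t)$-length of a nearby curve, chosen by averaging over a tube of $\omega_0$-radius $r$. By Fubini, the average $g(t)$-length over the tube is bounded by
\[
 r^{1-2n}\int_{\text{tube}}(\tr_{\omega_0}\omega)^{1/2}\,\omega_0^n
 \;\le\; r^{1-2n}\Bigl(\mathrm{Vol}\int_{\text{tube}}\tr_{\omega_0}\omega\,\omega_0^n\Bigr)^{1/2}.
\]
The integral $\int_B\tr_{\omega_0}\omega\,\omega_0^n=\int_B\omega\wedge\omega_0^{n-1}$ over an $\omega_0$-ball $B$ of radius $r$ is estimated by a cutoff and integration by parts against $\ddbar\varphi$. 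This yields the bound $C r^{2n-2}\cdot r^{\gamma}$, up to the smooth part $Cr^{2n}$, using the H\"older bound \eqref{eq:holderpotential} on $\varphi-\varphi(p)$.

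Chaining these averaged tube estimates dyadically along $\sigma$ (scales $r_k=2^{-k}d$, curves joining centers of successive balls) then gives
\[
 d_t(p,q)\le C\sum_k r_k^{\gamma/2}\le C d^{\gamma/2}.
\]
This accounts for the exponent $\gamma/2$: the energy $\int_B\omega\wedge\omega_0^{n-1}\lesssim r^{2n-2+\gamma}$ gives, by Cauchy--Schwarz, a length $\lesssim r^{\gamma/2}$ at scale $r$. The global diameter bound then follows by taking $p,q$ arbitrary, since $d_0(p,q)\le\diam(X,g_0)$.
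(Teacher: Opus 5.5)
Your final argument is sound in outline, but it takes a different route from the paper's.

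\textbf{The paper's route.} The paper does not reprove any H\"older distance estimate. With $\theta\le L\omega_0$ and $B=L+1$ it sets $u_t=(\varphi(t)-\sup_X\varphi(t))/B$. Then $\widetilde\omega(t)=\omega_0+\ddbar u_t=B^{-1}(\omega(t)+B\omega_0-\omega_t)$ is a smooth K\"ahler metric in the \emph{fixed} class $[\omega_0]$, with $\omega(t)\le B\widetilde\omega(t)$ and $\|u_t\|_{C^\gamma(X,\omega_0)}$ uniformly bounded. The paper then quotes Li's estimate \cite[Theorem~2]{GSS}, whose only input is this $C^\gamma$ bound. So the volume/entropy ``main obstacle'' you ran into does not exist. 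Your first choice $\eta_t=\omega_0+\ddbar\varphi(t)$ is exactly the paper's metric with $B=1$; dividing the potential by $B$ is what removes the restriction $\omega_0\ge\theta$.

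\textbf{Your route.} Your direct argument instead reproves Li's lemma for $\omega(t)$ itself. Its core estimate is $\int_{B(a,r)}\omega(t)\wedge\omega_0^{n-1}\le\int\chi\,\omega_t\wedge\omega_0^{n-1}+\int(\varphi(t)-\varphi(a,t))\,\ddbar\chi\wedge\omega_0^{n-1}\le Cr^{2n-2+\gamma}$. This uses only $\omega(t)\ge0$, $d\omega_0=0$, $\omega_t\le L\omega_0$ and the uniform H\"older bound. It therefore shows transparently why the degeneration of $[\omega_t]$ is harmless. The paper's version gains brevity by moving to a fixed class, so that a published estimate applies verbatim.

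\textbf{The chaining step needs repair.} As written, the chaining does not work.
\begin{enumerate}
\item Joining the fixed centres of successive balls leaves nothing to average over.
\item A good translate of $\sigma$ at scale $d$ has endpoints $p+v$, $q+v$ at distance up to $d$ from $p$ and $q$, so the problem has not become smaller.
\item The tube radius must be comparable to the segment length. For a tube of radius $r\ll d$ your Cauchy--Schwarz bound only gives $d\,r^{\gamma/2-1}$.
\end{enumerate}

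A correct version goes as follows. Take balls $B$, $B'$ of radius comparable to $r$ at distance $\lesssim r$ in a chart. Substitute $v=(1-s)z+sw$, fixing $w$ for $s\le1/2$ and $z$ for $s\ge1/2$, and combine this with the ball estimate. This shows that the average $g(t)$-length of the segment $[z,w]$ over $(z,w)\in B\times B'$ is at most $Cr^{\gamma/2}$.

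Now choose $z_k\in B(p,2^{-k}d)$ and $w_k\in B(q,2^{-k}d)$ independently and uniformly. The expected total length of the polygon $\cdots z_1z_0w_0w_1\cdots$ is at most $C\sum_k(2^{-k}d)^{\gamma/2}\le Cd^{\gamma/2}$, so some realization has length at most this. Since $d_t$ is continuous for each fixed $t$ and $z_k\to p$, $w_k\to q$, this gives $d_t(p,q)\le Cd_0(p,q)^{\gamma/2}$ whenever $d_0(p,q)$ is below a chart scale.

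Larger $d_0(p,q)$, and hence the diameter bound, then follow from a chain of boundedly many chart-sized steps. The observation $d_0(p,q)\le\diam(X,g_0)$ alone does not give this.
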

\begin{proof}
To obtain a constant uniform in time, we compare $\omega(t)$ with
auxiliary K\"ahler metrics in the fixed class $[\omega_0]$ and apply
the geometric H\"older estimate to this family.

Choose $L\ge1$ such that $\theta\le L\omega_0$, and fix $B=L+1$.
Then, for every $0\le t<1$,
\[
 \omega_t=(1-t)\omega_0+t\theta\le L\omega_0,
 \qquad B\omega_0-\omega_t\ge\omega_0.
\]
Set
\[
 u_t=\frac{\varphi(t)-\sup_X\varphi(t)}{B},\qquad
 \widetilde\omega(t)=\omega_0+\ddbar u_t
       =\frac{\omega(t)+B\omega_0-\omega_t}{B}.
\]
Thus $\widetilde\omega(t)>0$, $[\widetilde\omega(t)]=[\omega_0]$,
and $\omega(t)\le B\widetilde\omega(t)$. In particular,
$u_t\in\operatorname{PSH}(X,\omega_0)\cap C^\infty(X)$ and $\sup_Xu_t=0$.
By Propositions~\ref{prop:background} and~\ref{prop:holder},
\[
 M:=\sup_{0\le t<1}\|\varphi(t)\|_{C^\gamma(X,\omega_0)}<\infty,
 \qquad \|u_t\|_{C^\gamma(X,\omega_0)}\le\frac{2M}{B}.
\]
The distance estimate of Li for smooth K\"ahler metrics, as stated in
\cite[Theorem~2]{GSS}, therefore gives a constant
$C_*=C_*(X,\omega_0,\gamma,2M/B)$ such that
\[
 d_{\widetilde g(t)}(p,q)\le C_*d_0(p,q)^{\gamma/2}
       \qquad(0\le t<1),
 \qquad \widetilde g(t)=2\widetilde\omega(t)(\cdot,J\cdot).
\]
We emphasize that the background metric and the cohomology class of
$\widetilde\omega(t)$ are fixed, and that the normalized potentials
$u_t$ have a uniform $C^\gamma$ bound. Thus every parameter controlling
$C_*$ is independent of $t$, even though $[\omega_t]$ degenerates.
Note also that the cited estimate applies to individual K\"ahler
metrics and their potentials, so that the auxiliary family need not
satisfy any evolution equation.

Finally, the inequality $g(t)\le B\widetilde g(t)$ gives a
corresponding comparison of the lengths of piecewise smooth curves,
and taking the infimum over curves joining $p$ to $q$ yields
\[
 d_t(p,q)\le\sqrt B\,d_{\widetilde g(t)}(p,q)
          \le\sqrt B\,C_*d_0(p,q)^{\gamma/2}.
\]
Since $B$ is independent of $t$, this proves the distance estimate,
and the diameter bound follows from the compactness of $(X,g_0)$.
Reversing the time normalization, we obtain
Theorem~\ref{thm:diameter}(i).
\end{proof}

\section{The Type I diameter bound for fibres of contraction}\label{sec:fibres}

We continue to assume $T=1$, and all distances in this section are
measured in the ambient space $X$.

Our starting point is the control of the K\"ahler potentials
established in Section~\ref{sec:globaldiameter} as a consequence of
the scalar curvature estimate. Namely, there are $N\ge1$ and
$\gamma\in(0,1)$ such that
\begin{equation}\label{eq:introchain}
 |\dot\varphi(t)|\le C\bigl(1+|\log(1-t)|\bigr),\qquad
 \omega(t)\le C(1-t)^{-N}\omega_0,\qquad
 \sup_{t<1}\|\varphi(t)\|_{C^\gamma(X,\omega_0)}\le C.
\end{equation}
Recall that the trace bound was obtained by applying the maximum
principle to $\log\tr_{\omega_0}\omega+A(t\dot\varphi-\varphi)$, and
that interpolation with the temporal modulus of continuity yields the
spatial H\"older bound. The geometric H\"older estimate of Guo, Song
and Sturm \cite{GSS}, whose smooth case originates in \cite{Li}, then
gives
\begin{equation}\label{eq:introholderdistance}
 d_t(p,q)\le C d_0(p,q)^{\gamma/2}.
\end{equation}
As explained in the proof of Theorem~\ref{thm:distanceholder}, the
comparison with a fixed class makes the constant uniform, despite the
variation of the cohomology class.

The proof of Theorem~\ref{thm:diameter}(ii) rests on two further
ingredients. The first is a comparison of distances at different
times, which shows that distances can decrease forward in time at most
at the parabolic rate. The second is the vanishing in the limit of the
energy of holomorphic maps whose images lie in a fibre. Together they
show that every fibre collapses, and the comparison of distances then
propagates this collapse back to earlier times at the Type I rate.

\subsection{}
We first show that, at the parabolic scale, the conjugate heat kernel
based at a point remains concentrated near that point itself.

\begin{lemma}\label{lem:moment}
There exists a constant $C>0$ such that, for every $p\in X$ and
$0\le s<t<1$,
\begin{equation}\label{eq:moment}
 \int_X d_s(p,z)^2\,d\nu_{p,t;s}(z)\le C(t-s).
\end{equation}
The constant is independent of $p,s,t$.
\end{lemma}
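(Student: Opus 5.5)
The plan is to reduce \eqref{eq:moment} to a bound on the distance between $p$ and an $H_{2n}$-center of $(p,t)$. Fix $0\le s<t<1$, set $\tau=t-s$, and let $(z_s,s)$ be an $H_{2n}$-center of $(p,t)$, which exists by \cite[Proposition~3.12]{BamHeat}. By the triangle inequality and $(a+b)^2\le 2a^2+2b^2$,
\[
 \int_X d_s(p,z)^2\,d\nu_{p,t;s}(z)\le 2\,d_s(p,z_s)^2+2\int_X d_s(z_s,z)^2\,d\nu_{p,t;s}(z)
 \le 2\,d_s(p,z_s)^2+2H_{2n}\tau .
\]
So it suffices to show that $d_s(p,z_s)^2\le C\tau$ with $C$ independent of $p,s,t$.

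To get this, I compare a lower bound for $K(p,t;p,s)$ with the Gaussian upper bound \eqref{eq:heatupper} evaluated at $z=p$. The lower bound comes from the reduced-density comparison $K(p,t;p,s)\ge(4\pi\tau)^{-n}e^{-\ell_{p,t}(p,s)}$. I bound $\ell_{p,t}(p,s)$ by testing the constant curve $\zeta\equiv p$, which gives
\[
 \ell_{p,t}(p,s)\le\frac{1}{2\sqrt\tau}\int_0^\tau\sqrt r\,R(p,t-r)\,dr .
\]
This is the one place where the Type I bound \eqref{eq:scalarI} enters, and it is the step I expect to be decisive. It gives $R(p,t-r)\le C/(1-t+r)\le C/r$, so $\sqrt r\,R\le Cr^{-1/2}$, whose integral over $[0,\tau]$ is $2C\sqrt\tau$. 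Hence $\ell_{p,t}(p,s)\le C$ uniformly, and therefore $K(p,t;p,s)\ge c\,\tau^{-n}$.

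On the other hand, \eqref{eq:heatupper} with $z=p$ reads
\[
 K(p,t;p,s)\le C\tau^{-n}\exp\bigl(-d_s(z_s,p)^2/(C\tau)\bigr).
\]
Combining the two bounds gives $\exp\bigl(-d_s(z_s,p)^2/(C\tau)\bigr)\ge c/C$, that is, $d_s(z_s,p)^2\le C\tau$. Inserting this into the first display proves \eqref{eq:moment}.

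Uniformity of the constants needs checking. The Gaussian bound \eqref{eq:heatupper} is uniform because $\tau\le1$, the entropy bound $-A\le\mathcal N\le 0$ holds, and $R\ge -C$. The reduced-distance estimate uses only the pointwise bound $R\le C/(1-u)$ together with $1-t+r\ge r$. No other dependence on $p$, $s$ or $t$ enters.
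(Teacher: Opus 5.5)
Your proposal is correct and follows the paper's proof essentially step for step. Like the paper, you bound the reduced distance along the constant curve using the Type I bound $R(p,t-r)\le C/(1-t+r)\le C/r$, compare the resulting on-diagonal lower bound with the Gaussian upper bound \eqref{eq:heatupper} at $z=p$ to get $d_s(p,z_s)^2\le C(t-s)$, and finish with the $H_{2n}$-center variance bound and the triangle inequality.
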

\begin{proof}
Set $h=t-s$ and $\lambda=1-t$. In the definition of the reduced
distance, we test with the constant curve $\zeta(r)=p$. By
\eqref{eq:scalarI},
\[
 \ell_{p,t}(p,s)
 \le\frac{C}{2\sqrt h}\int_0^h\frac{\sqrt r}{\lambda+r}\,dr
 \le\frac{C}{2\sqrt h}\int_0^h r^{-1/2}\,dr\le C.
\]
Note that this constant is independent of the ratio $h/\lambda$.
Perelman's reduced-density comparison
\cite[Corollary~9.5]{Perelman} consequently gives
\begin{equation}\label{eq:diagonallower}
 K(p,t;p,s)\ge(4\pi h)^{-n}e^{-\ell_{p,t}(p,s)}\ge c h^{-n}.
\end{equation}
Choose an $H_{2n}$-center $z_s$ of $(p,t)$ on the slice at time $s$.
Evaluating \eqref{eq:heatupper} at $p$ and combining it with
\eqref{eq:diagonallower}, we obtain
\[
 c h^{-n}\le C_1h^{-n}
       \exp\left(-\frac{d_s(z_s,p)^2}{C_2h}\right).
\]
After cancelling $h^{-n}$ and enlarging $C_1$ if necessary, taking
logarithms yields
\[
 d_s(z_s,p)^2\le C_2h\log(C_1/c)\le Ch.
\]
For every $z\in X$, the triangle inequality gives
$d_s(p,z)^2\le2d_s(p,z_s)^2+2d_s(z_s,z)^2$. Integrating against
$\nu_{p,t;s}$, and using its unit mass together with the defining
estimate for the $H_{2n}$-center, we obtain
\begin{align*}
 \int_Xd_s(p,z)^2\,d\nu_{p,t;s}(z)
 &\le2d_s(p,z_s)^2+2\int_Xd_s(z_s,z)^2\,d\nu_{p,t;s}(z)\\
 &\le Ch+2H_{2n}h.
\end{align*}
This proves the lemma.
\end{proof}

As a consequence, distances can decrease forward in time at most at
the parabolic rate.

\begin{lemma}\label{lem:expansion}
There exists a constant $C>0$ such that, for every $p,q\in X$ and
$0\le s<t<1$,
\begin{equation}\label{eq:expansion}
 d_s(p,q)\le d_t(p,q)+C\sqrt{t-s}.
\end{equation}
\end{lemma}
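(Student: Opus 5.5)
The argument couples the two points $p$ and $q$ through the backward conjugate heat kernels based at time $t$. The Wasserstein contraction \eqref{eq:Wcontract} controls the transport distance between $\nu_{p,t;s}$ and $\nu_{q,t;s}$ at the earlier time $s$. Lemma~\ref{lem:moment} says each of these measures stays within distance $O(\sqrt{t-s})$ of its own base point in the $g(s)$ metric. Combining the two through the triangle inequality for $d_{W_1}^{g(s)}$ gives the estimate.

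Concretely, fix $p,q\in X$ and $0\le s<t<1$. Work on the compact metric space $(X,d_s)$ and compare three kinds of measures: $\delta_p$, $\nu_{p,t;s}$, $\nu_{q,t;s}$ and $\delta_q$. First note that $d_s(p,q)=d_{W_1}^{g(s)}(\delta_p,\delta_q)$. The triangle inequality then gives
\[
 d_s(p,q)\le d_{W_1}^{g(s)}(\delta_p,\nu_{p,t;s})
 +d_{W_1}^{g(s)}(\nu_{p,t;s},\nu_{q,t;s})
 +d_{W_1}^{g(s)}(\nu_{q,t;s},\delta_q).
\]
The middle term is at most $d_t(p,q)$ by \eqref{eq:Wcontract}.

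For the outer terms, there is only one coupling between a Dirac mass and a probability measure, namely the product. This gives
\[
 d_{W_1}^{g(s)}(\delta_p,\nu_{p,t;s})=\int_Xd_s(p,z)\,d\nu_{p,t;s}(z).
\]
By Cauchy--Schwarz, using that $\nu_{p,t;s}$ has unit mass, this is at most
\[
 \Bigl(\int_Xd_s(p,z)^2\,d\nu_{p,t;s}(z)\Bigr)^{1/2}\le C\sqrt{t-s},
\]
where the last step is \eqref{eq:moment}. The same bound holds with $q$ in place of $p$. Summing the three terms yields \eqref{eq:expansion}, with constant $2C^{1/2}$, where $C$ is the constant of Lemma~\ref{lem:moment}.

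Given Lemma~\ref{lem:moment}, no step here is a serious obstacle. The substantive inputs, namely uniform concentration of the conjugate heat kernel at its own base point at every scale and the Wasserstein monotonicity, have already been supplied. The only point needing care is that \eqref{eq:Wcontract} is applied with the $g(s)$ distance at the earlier time and the $g(t)$ distance at the later time, in the direction stated by Bamler's lemma. One should also confirm that its hypotheses hold uniformly on $[s,t]\subset[0,1)$. This is the case because the flow is smooth on $X\times[0,t]$ and $X$ is compact.
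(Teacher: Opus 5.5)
Your proposal is correct and follows essentially the same route as the paper: you apply the triangle inequality for $d_{W_1}^{g(s)}$ through $\delta_p,\nu_{p,t;s},\nu_{q,t;s},\delta_q$, bound the middle term by the Wasserstein contraction \eqref{eq:Wcontract}, and bound the outer terms by the product-coupling identity, Cauchy--Schwarz and Lemma~\ref{lem:moment}. You also apply the contraction in the correct direction: the earlier-time $W_1$ distance is bounded by the later-time distance $d_t(p,q)$.
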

\begin{proof}
Put $\mu_p=\nu_{p,t;s}$ and $\mu_q=\nu_{q,t;s}$. The only coupling of
$\delta_p$ with a probability measure $\mu$ is $\delta_p\otimes\mu$.
Therefore
\[
 d_{W_1}^{g(s)}(\delta_p,\delta_q)=d_s(p,q),
\]
\begin{align*}
 d_{W_1}^{g(s)}(\delta_p,\mu_p)
 &=\int_Xd_s(p,z)\,d\mu_p(z)\\
 &\le\left(\int_Xd_s(p,z)^2\,d\mu_p(z)\right)^{1/2}
 \le C\sqrt{t-s},
\end{align*}
where the inequalities follow from the Cauchy-Schwarz inequality and
Lemma~\ref{lem:moment}. The same bound holds with $q$ in place of $p$.
Applying the triangle inequality for $d_{W_1}^{g(s)}$, followed by
\eqref{eq:Wcontract}, we obtain
\begin{align*}
 d_s(p,q)
 &\le d_{W_1}^{g(s)}(\delta_p,\mu_p)
       +d_{W_1}^{g(s)}(\mu_p,\mu_q)
       +d_{W_1}^{g(s)}(\mu_q,\delta_q)\\
 &\le d_t(p,q)+C\sqrt{t-s}.
\end{align*}
\end{proof}

\subsection{}
We next turn to holomorphic maps into the fibres of $\Phi$. To pass
from their energy to distances, we need the following Poincar\'e-type
inequality for maps, with a constant independent of the target.

\begin{lemma}\label{lem:targetPoincare}
Let $(Z,h)$ be a fixed compact   Riemannian manifold, and let
\[
 d\sigma=\frac{dV_h}{\Vol_h(Z)}
\]
be its probability volume measure. Then there exists $C=C(Z,h)>0$ such
that, for every smooth map $F:Z\to(M,g_M)$ into a Riemannian manifold,
\begin{equation}\label{eq:targetPoincare}
 \int_{Z\times Z}d_{g_M}(F(z),F(w))^2\,d\sigma(z)d\sigma(w)
       \le C\int_Z|dF|_{h,g_M}^2\,d\sigma.
\end{equation}
The constant depends only on the source metric. In particular, it is
independent of the target and of the map $F$.
\end{lemma}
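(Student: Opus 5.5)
The plan is to reduce the statement to curves joining pairs of points in $Z$, bound the pullback of $d_{g_M}$ by the length of the image curve, and then average over the pairs $(z,w)$ using a family of curves whose distribution is controlled purely by the source geometry.

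For each pair $(z,w)$ I choose a minimizing $h$-geodesic $\gamma_{z,w}:[0,1]\to Z$, with the choice made measurably. Then
\[
 d_{g_M}(F(z),F(w))\le\int_0^1|dF(\dot\gamma_{z,w}(r))|_{g_M}\,dr
 \le d_h(z,w)\int_0^1|dF|_{h,g_M}(\gamma_{z,w}(r))\,dr .
\]
Squaring, applying Cauchy--Schwarz in $r$, and using $d_h\le\diam(Z,h)$, it remains to show
\[
 \int_{Z\times Z}\int_0^1 e(\gamma_{z,w}(r))\,dr\,d\sigma(z)d\sigma(w)\le C\int_Z e\,d\sigma
\]
for every nonnegative continuous function $e$ on $Z$; I then apply this with $e=|dF|^2$. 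The key feature is that this inequality involves only $(Z,h)$. The target enters solely through the pointwise inequality $|dF(v)|\le|dF|\,|v|$, so the constant is automatically independent of $M$ and $F$.

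The averaged inequality is the standard segment inequality of Cheeger--Colding. I split the integral over $r\in[0,1/2]$ and $r\in[1/2,1]$ and use the symmetry $z\leftrightarrow w$. For $r\ge1/2$, fix $z$ and write $w=\exp_z(\rho v)$ in polar coordinates, with $\rho$ bounded by the cut distance. The map $w\mapsto\gamma_{z,w}(r)=\exp_z(r\rho v)$ has Jacobian ratio between the volume densities at radii $r\rho$ and $\rho$. This ratio is bounded by $C(r)$ by Bishop--Gromov volume comparison, using a lower Ricci bound of $h$ and $\diam(Z,h)<\infty$ on the compact manifold. Moreover $C(r)$ is uniform for $r\ge1/2$. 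Hence
\[
 \int_Z e(\gamma_{z,w}(r))\,d\sigma(w)\le C\int_Z e\,d\sigma ,
\]
and integrating over $z$ and $r$ gives that half. The other half follows by the symmetric argument with the roles of $z$ and $w$ swapped.

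The main technical point is the cut locus and the measurability of the choice of geodesics. I will handle it by noting that for each $z$ the cut locus has measure zero. For $\sigma$-a.e.\ $w$ the minimizing geodesic is therefore unique and depends smoothly on $w$ off the cut locus, so the change of variables inside the injectivity domain of $\exp_z$ is legitimate. Alternatively, one could avoid the segment inequality entirely and pass through a Sobolev/Poincaré argument on $Z\times Z$. However, the target-independence is cleanest via the geodesic averaging above, so I would commit to that route.
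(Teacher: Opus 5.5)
Your proof is correct, but it takes a different route from the paper.

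\emph{The paper's route.} The paper argues entirely in coordinates. It covers $Z$ by finitely many charts with convex Euclidean image and joins $z$ to $w$ by straight coordinate segments. After splitting at $r=1/2$, the substitution $v=(1-r)z+rw$ has inverse Jacobian at most $2^{\dim_{\mathbb R}Z}$, which gives the estimate on each $U_i\times U_i$. It then globalizes by chaining through overlapping charts, averaging the intermediate points over the overlaps and using $(\sum_{j\le\ell}b_j)^2\le\ell\sum_jb_j^2$.

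\emph{Your route.} You prove the global Cheeger--Colding segment inequality directly. You use minimizing geodesics, polar coordinates off the cut locus, and Bishop--Gromov. The relevant input is its pointwise Jacobian form, namely that $\mathcal A(\rho,v)/\mathcal A_K(\rho)$ is nonincreasing before the cut point, not the volume-of-balls statement. This bounds $\mathcal A(\rho,v)/\mathcal A(r\rho,v)$ uniformly for $r\ge1/2$ and $\rho\le\diam(Z,h)$.

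\emph{What each buys.} Your argument avoids the combinatorial chaining step. Its constant depends only on $\dim Z$, a lower Ricci bound and the diameter, so it is uniform over families of sources with such bounds. The price is the cut-locus bookkeeping: the cut locus is null, minimizing geodesics are a.e.\ unique and symmetric under $z\leftrightarrow w$, and the integrand is measurable. You handle these points adequately. The paper's argument is more elementary, needing only Euclidean changes of variables. Its constant depends on the chosen atlas and is not explicit, but that suffices for Lemma~\ref{lem:fibreenergy}, where the source $(Z,h_\eta)$ is fixed and only the target varies.

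\emph{Connectedness.} Both arguments tacitly require $Z$ to be connected. The paper uses it through the connected intersection graph of the cover; you use it through the existence of minimizing geodesics between any two points. Without connectedness the inequality fails for locally constant maps.

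\emph{Your aside.} The suggested Poincar\'e argument on $Z\times Z$ would not go through as stated. The quantity $d_{g_M}(F(z),F(w))$ is not a difference of scalar functions of $z$ and $w$, so committing to the segment inequality was the right choice.
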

\begin{proof}
The assertion is immediate when $Z$ is a point. Otherwise, choose
finitely many coordinate balls $U_1,\ldots,U_L$ covering $Z$, each with
convex Euclidean image and uniformly comparable coordinate metric and
volume density. Write $E=|dF|_{h,g_M}^2$.
Within a single chart, we compose $F$ with the straight segment
joining $z$ to $w$ in coordinates. Estimating the length of the
resulting curve and applying the Cauchy-Schwarz inequality gives
\[
 d_{g_M}(F(z),F(w))^2
 \le C|z-w|^2\int_0^1E((1-r)z+rw)\,dr.
\]
To integrate this estimate, split the $r$-integral at $1/2$. On the
first half fix $w$ and substitute $v=(1-r)z+rw$. Its inverse Jacobian
is at most $2^{\dim_{\mathbb R}Z}$. On the second half, we fix $z$ and
substitute $v=(1-r)z+rw$ instead. Convexity ensures that $v$ remains
in the chart. Together with the uniform comparison of the coordinate
metrics and volume densities, this gives
\[
 \int_{U_i\times U_i}d_{g_M}(F(z),F(w))^2\,d\sigma(z)d\sigma(w)
 \le C_i\int_{U_i}E\,d\sigma.
\]
To pass from local to global, note that the intersection graph of the
cover is connected, since $Z$ is connected. Given any two endpoint
charts, we join them by a chain of overlapping charts without
repetition, and choose an intermediate point in each successive
overlap. The triangle inequality in the target and
$(\sum_{j=1}^{\ell}b_j)^2\le\ell\sum_jb_j^2$ bound the squared
distance between the endpoints by the sum of the squared distances
between successive points, each pair of which lies in a single chart.
We then integrate the endpoints over their charts, and average each
intermediate point with respect to the probability measure obtained
by restricting $\sigma$ to its overlap. The preceding local estimate
bounds every resulting term by a constant times $\int_ZE\,d\sigma$,
the denominators introduced by the averaging being the positive
measures of fixed nonempty open overlaps. Summing over the finitely
many pairs of endpoint charts proves the estimate, with a constant
depending only on the source metric.
\end{proof}

\begin{lemma}\label{lem:fibreenergy}
Let $\omega(t)=\omega_t+\ddbar\varphi(t)$ be the K\"ahler-Ricci flow
of Section~\ref{sec:background}, normalized so that $T=1$, with
Riemannian distance $d_t$. Let $f:Z\to X$ be a holomorphic map from a
compact   K\"ahler manifold $Z$ such that $\Phi\circ f$ is
constant. Fix a K\"ahler form $\eta$ on $Z$, set $k=\dim_{\mathbb C}Z$,
and equip $Z$ with the metric $h_\eta=2\eta(\cdot,J\cdot)$ and the
probability volume measure
\[
 d\sigma=\frac{\eta^k}{\int_Z\eta^k}.
\]
Then there exists a constant $C_{Z,f,\eta}>0$, independent of $t$, such that
\begin{equation}\label{eq:fibreenergy}
 \int_{Z\times Z}d_t(f(z),f(w))^2\,d\sigma(z)d\sigma(w)
             \le C_{Z,f,\eta}(1-t),\qquad 0\le t<1.
\end{equation}
\end{lemma}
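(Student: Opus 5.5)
By Lemma~\ref{lem:targetPoincare}, applied with source $(Z,h_\eta)$ and target $(X,g(t))$, the left-hand side of \eqref{eq:fibreenergy} is bounded by $C(Z,\eta)\int_Z|df|^2_{h_\eta,g(t)}\,d\sigma$. The constant does not depend on $t$, because the lemma is uniform in the target. It therefore suffices to prove an energy bound of the form
\[
 \int_Z \tr_\eta\bigl(f^*\omega(t)\bigr)\,\eta^k\le C(1-t),
\]
using that for a holomorphic map $|df|^2_{h_\eta,g(t)}$ is a fixed constant multiple of $\tr_\eta f^*\omega(t)$. The left-hand side is $k\int_Z f^*\omega(t)\wedge\eta^{k-1}$. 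This is a closed-form pairing, so it depends only on the cohomology class of $f^*\omega(t)$.

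The first step is to compute that class. We have
\[
 f^*\omega(t)=f^*\omega_t+\ddbar\bigl(\varphi(t)\circ f\bigr),\qquad f^*\omega_t=(1-t)f^*\omega_0+t\,f^*\theta .
\]
Since $\Phi\circ f$ is constant and $\theta=\Phi^*\theta_Y$ is given locally by pullback of local potentials on $Y$, we get $f^*\theta=0$ pointwise: locally $\theta=\ddbar(\psi\circ\Phi)$, and $\psi\circ\Phi\circ f$ is constant. The function $\varphi(t)\circ f$ is smooth on the compact manifold $Z$, so by Stokes the $\ddbar$ term integrates to zero against the closed form $\eta^{k-1}$.

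Putting these together,
\[
 \int_Z f^*\omega(t)\wedge\eta^{k-1}=(1-t)\int_Z f^*\omega_0\wedge\eta^{k-1}.
\]
Dividing by $\int_Z\eta^k$ gives the energy bound with
\[
 C_{Z,f,\eta}=C(Z,\eta)\,k\int_Z f^*\omega_0\wedge\eta^{k-1}\Big/\int_Z\eta^k .
\]

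The argument is short. The only point needing care is the pointwise vanishing $f^*\theta=0$ for a form on a possibly singular normal space $Y$. This follows from the convention that smooth forms on $Y$ are restrictions of ambient forms with local potentials, so the pullback through a constant map vanishes. One should also state explicitly that the normalization constant relating $|df|^2$ to the trace is harmless.
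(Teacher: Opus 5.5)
Your proposal is correct and follows essentially the same route as the paper: you use $f^*\theta=0$ and Stokes to get $\int_Z \tr_\eta f^*\omega(t)\,\eta^k = k(1-t)\int_Z f^*\omega_0\wedge\eta^{k-1}$, convert this to the energy via $|df|^2_{h_\eta,g(t)}=2\tr_\eta f^*\omega(t)$, and then apply the target-independent Poincar\'e inequality of Lemma~\ref{lem:targetPoincare}. The only omission is that the paper disposes of the case $k=0$ separately, where your expression $\eta^{k-1}$ is not meaningful and the estimate is immediate.
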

\begin{proof}
If $k=0$, the assertion is immediate. Otherwise, since
$\theta=\Phi^*\theta_Y$ and $\Phi\circ f$ is constant, we have
$f^*\theta=0$, and pulling back the potential expression for the flow
gives
\[
 f^*\omega(t)=(1-t)f^*\omega_0+\ddbar(\varphi(t)\circ f).
\]
Consequently, the trace-wedge identity and Stokes' theorem imply
\begin{equation}\label{eq:energyclass}
 \int_Z\tr_\eta f^*\omega(t)\,\eta^k
 =k\int_Z f^*\omega(t)\wedge\eta^{k-1}
 =k(1-t)\int_Z f^*\omega_0\wedge\eta^{k-1}.
\end{equation}
Since $f$ is holomorphic, the real Hilbert-Schmidt norm of $df$
satisfies $|df|_{h_\eta,g(t)}^2=2\tr_\eta f^*\omega(t)$ in our metric conventions. Thus its energy is
\[
 \int_Z|df|_{h_\eta,g(t)}^2\,d\sigma
 =\frac{2k(1-t)}{\int_Z\eta^k}
       \int_Z f^*\omega_0\wedge\eta^{k-1}.
\]
Applying Lemma~\ref{lem:targetPoincare} to the fixed source
$(Z,h_\eta)$ and the target $(X,g(t))$, and recalling that its
constant is independent of the target metric, we obtain
\eqref{eq:fibreenergy}.
\end{proof}

We can now prove the Type I diameter bound for all fibres.

\begin{proposition}\label{prop:allfibres}
There exists a constant $C>0$, independent of $y$ and $s$, such that
\[
 \diam_{d_s}\Phi^{-1}(y)\le C\sqrt{1-s}
       \qquad(y\in Y,\ 0\le s<1).
\]
\end{proposition}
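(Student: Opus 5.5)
The plan is to combine Lemma~\ref{lem:expansion} with the fact that, in the limit $t\to1$, any two points of a single fibre merge. Fix $y\in Y$, points $p,q\in\Phi^{-1}(y)$, and a time $0\le s<1$. For every $t\in(s,1)$, Lemma~\ref{lem:expansion} gives
\[
 d_s(p,q)\le d_t(p,q)+C\sqrt{t-s}\le d_t(p,q)+C\sqrt{1-s},
\]
and the constant $C$ here is independent of $p,q,s,t$. It therefore suffices to prove that
\[
 \lim_{t\to1}d_t(p,q)=0 \qquad\text{for all } p,q\in\Phi^{-1}(y).
\]
This limit is only needed pointwise, with no rate and no uniformity in $y$. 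Indeed, letting $t\to1$ in the display gives $d_s(p,q)\le C\sqrt{1-s}$ with the constant of Lemma~\ref{lem:expansion}. The dependence of the rate of convergence on $p,q,y$ disappears in the limit, which is why the final constant is uniform.

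\textbf{Step 1: reduction to one irreducible component.} The fibre $F=\Phi^{-1}(y)$ is a compact connected analytic subset of $X$, since $\Phi$ has connected fibres. It has finitely many irreducible components $F_1,\dots,F_m$, and their intersection graph is connected. Any two points of $F$ can therefore be joined by a finite chain of points in which each consecutive pair lies in a common component. By the triangle inequality, it is enough to show $d_t(p,q)\to0$ when $p,q$ lie in a single component $F_j$.

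For such a component, take a resolution of singularities $f:Z\to F_j\subset X$. Here $Z$ is a compact connected K\"ahler manifold, $f$ is holomorphic and surjective onto $F_j$, and $\Phi\circ f$ is constant. Fixing a K\"ahler form $\eta$ on $Z$, Lemma~\ref{lem:fibreenergy} gives
\[
 \int_{Z\times Z}d_t(f(z),f(w))^2\,d\sigma(z)\,d\sigma(w)\le C_{Z,f,\eta}(1-t).
\]
If $F_j$ is a point there is nothing to prove.

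\textbf{Step 2: from the averaged bound to a pointwise bound.} This step, which upgrades the averaged estimate to a pointwise one, is where the main difficulty lies. The tool is the uniform H\"older control of Theorem~\ref{thm:distanceholder}. Choose $z_0,w_0\in Z$ with $f(z_0)=p$ and $f(w_0)=q$, and let $B_r(z_0),B_r(w_0)$ be $h_\eta$-balls of small radius $r$. Since $f$ is smooth on the compact manifold $Z$, it is Lipschitz from $h_\eta$ to $g_0$ with some constant $L_f$. Hence, for $z\in B_r(z_0)$,
\[
 d_t(f(z),p)\le C\,d_0(f(z),p)^{\gamma/2}\le C(L_fr)^{\gamma/2},
\]
uniformly in $t$, and the same bound holds near $q$. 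By the triangle inequality, for all $z\in B_r(z_0)$ and $w\in B_r(w_0)$,
\[
 d_t(p,q)\le 2C(L_fr)^{\gamma/2}+d_t(f(z),f(w)).
\]
Averaging over $B_r(z_0)\times B_r(w_0)$ and applying Cauchy--Schwarz together with Lemma~\ref{lem:fibreenergy} yields
\[
 d_t(p,q)\le 2C(L_fr)^{\gamma/2}
 +\frac{\bigl(C_{Z,f,\eta}(1-t)\bigr)^{1/2}}{\sigma(B_r(z_0))\,\sigma(B_r(w_0))}.
\]
Letting $t\to1$ with $r$ fixed removes the second term, and then letting $r\to0$ gives $\limsup_{t\to1}d_t(p,q)=0$. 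Combined with the chain argument of Step~1 and the first display, this proves the proposition.
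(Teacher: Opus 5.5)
Your proof is correct, but it handles the key step differently from the paper. That step is passing from the averaged energy bound of Lemma~\ref{lem:fibreenergy} to pointwise control on a single component.

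\textbf{Your route.} You use the uniform H\"older estimate of Theorem~\ref{thm:distanceholder} as a $t$-independent modulus of continuity. Averaging over small balls around preimages of $p$ and $q$ then gives the qualitative collapse $\limsup_{t\to1}d_t(p,q)=0$, with no rate. Lemma~\ref{lem:expansion} enters only once, at the very end.

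\textbf{The paper's route.} The paper uses Lemma~\ref{lem:expansion} inside the integral. For fixed $s$ it sets $A_s(z,w)=\bigl(d_s(f(z),f(w))-C_0\sqrt{1-s}\bigr)_+$ and notes that $A_s\le d_t(f(z),f(w))$ for every $t\in(s,1)$. Hence $\int A_s^2\,d\sigma\,d\sigma\le C_{Z,f,\eta}(1-t)$ for all such $t$, so this time-independent integral vanishes. Since $A_s$ is a single continuous function and $\sigma\otimes\sigma$ has full support, $A_s\equiv0$.

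\textbf{Comparison.} The paper only needs continuity of the one function $d_s$, so no equicontinuity in $t$ is required. It also gets directly the quantitative component bound $\diam_{d_s}F_0\le C_0\sqrt{1-s}$. This keeps the fibre estimate independent of Section~\ref{sec:globaldiameter}: it uses only the heat-kernel distance comparison and the vanishing of the fibre energy, not the polynomial trace bound, potential H\"older estimate and geometric H\"older estimate of Guo--Song--Sturm. Your argument leans on that heavier machinery, which is legitimate since Theorem~\ref{thm:distanceholder} is already proved by that point. Your averaging and Cauchy--Schwarz computation is also correct. The chain argument over irreducible components and the final step, letting $t\to1$ in Lemma~\ref{lem:expansion} to make the constant independent of $y$, are the same as in the paper.

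One small point you assert without justification is that the resolution $Z$ is K\"ahler. The paper justifies this by resolving $F_j$ through blow-ups of $X$ along smooth centers and taking the smooth strict transform.
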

\begin{proof}
Let $C_0$ be the uniform constant in \eqref{eq:expansion}.
We first prove the bound for an irreducible component
$F_0$ of a reduced fibre. Since a zero-dimensional component has zero
diameter, we may assume that $F_0$ is positive-dimensional. Embedded
analytic resolution \cite{BM} then gives a surjective holomorphic map
$f:Z\to F_0$ with $Z$ compact, connected, and K\"ahler. Indeed, we may
resolve $F_0$ by blow-ups of the ambient compact K\"ahler manifold $X$
along smooth centers and take its smooth strict transform, since these
blow-ups are K\"ahler, restriction gives a K\"ahler form $\eta$ on $Z$.

Fix $s<1$ and define the continuous, nonnegative function
\[
 A_s(z,w)=\bigl(d_s(f(z),f(w))-C_0\sqrt{1-s}\bigr)_+,
 \qquad (z,w)\in Z\times Z,
\]
where $r_+=\max\{r,0\}$. For every $s<t<1$, \eqref{eq:expansion}
and $\sqrt{t-s}\le\sqrt{1-s}$ imply
$A_s(z,w)\le d_t(f(z),f(w))$. Lemma~\ref{lem:fibreenergy} therefore gives
\[
 0\le\int_{Z\times Z}A_s^2\,d\sigma\,d\sigma
 \le\int_{Z\times Z}d_t(f(z),f(w))^2\,d\sigma\,d\sigma
 \le C_{Z,f,\eta}(1-t).
\]
Since the integral of $A_s^2$ is independent of $t$, letting
$t\rightarrow1$ shows that it vanishes. As $\sigma\otimes\sigma$ has
full support and $A_s$ is continuous, it follows that $A_s\equiv0$,
for a positive value would give a positive integral over an open
neighborhood. By the surjectivity of $f$, we conclude that
\begin{equation}\label{eq:componentdiameter}
 \diam_{d_s}F_0\le C_0\sqrt{1-s}.
\end{equation}
Note that the constant in this bound is independent of the resolution.

Now fix $y$. The reduced fibre is connected and has finitely many
irreducible components, say $r(y)$ of them, so its component
intersection graph is connected. Joining two points in the fibre by a
chain of components and choosing points in successive intersections,
we obtain from the triangle inequality and
\eqref{eq:componentdiameter}, now at time $t$,
\[
 \diam_{d_t}\Phi^{-1}(y)\le r(y)C_0\sqrt{1-t}\longrightarrow0
       \qquad(t\rightarrow1).
\]
This shows that each fibre collapses, although not yet uniformly,
since no uniform bound for $r(y)$ is available.

To obtain a uniform bound, fix $p,q\in\Phi^{-1}(y)$ and $s<1$, and
apply \eqref{eq:expansion} directly to this pair. Letting
$t\rightarrow1$ and using the collapse just proved, we find
\[
 d_s(p,q)\le\lim_{t\rightarrow1}\bigl(d_t(p,q)+C_0\sqrt{t-s}\bigr)
       =C_0\sqrt{1-s}.
\]
Taking the supremum over the fibre proves the proposition with a
constant independent of $y$.
\end{proof}

Since reversing the time normalization multiplies distances by
$\sqrt T$ and turns $\sqrt{1-s}$ into $\sqrt{T-t}$,
Proposition~\ref{prop:allfibres} proves Theorem~\ref{thm:diameter}(ii).

\section{Gromov-Hausdorff convergence}\label{sec:GH}

In this section, we prove Theorem~\ref{thm:GH}. The idea is to show
first that the distance functions converge to a pseudometric on $X$,
and then to identify its zero set with the fibres of $\Phi$.

\begin{lemma}\label{lem:distancelimit}
For every $p,q\in X$, the limit
\begin{equation}\label{eq:pseudometric}
 d_\infty(p,q)=\lim_{t\rightarrow1}d_t(p,q)
\end{equation}
exists. Moreover, $d_\infty$ is a continuous pseudometric on $X$, and
$d_t\to d_\infty$ uniformly on $X\times X$.
\end{lemma}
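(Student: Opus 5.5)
Our approach is to use the almost-monotonicity of Lemma~\ref{lem:expansion} to get existence of the limit, and the uniform H\"older bound of Theorem~\ref{thm:distanceholder} for equicontinuity and uniformity. Fix $p,q\in X$. By \eqref{eq:expansion}, for $s<t<1$ we have $d_s(p,q)-C_0\sqrt{1-s}\le d_t(p,q)$, while by \eqref{eq:distanceholder} the values $d_t(p,q)$ lie in $[0,C]$. Set $\ell=\liminf_{t\to1}d_t(p,q)$ and $L=\limsup_{t\to1}d_t(p,q)$, both finite. Choose $s_j\to1$ with $d_{s_j}(p,q)\to L$. For each fixed $j$ and every $t>s_j$, we get $d_t(p,q)\ge d_{s_j}(p,q)-C_0\sqrt{1-s_j}$. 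Taking $\liminf_{t\to1}$ gives $\ell\ge d_{s_j}(p,q)-C_0\sqrt{1-s_j}$, and letting $j\to\infty$ yields $\ell\ge L$. Hence the limit \eqref{eq:pseudometric} exists pointwise.

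For the pseudometric property, note that symmetry, $d_\infty(p,p)=0$, nonnegativity and the triangle inequality all pass to pointwise limits from the metrics $d_t$. For continuity, \eqref{eq:distanceholder} combined with the triangle inequality for $d_t$ gives
\[
 |d_t(p,q)-d_t(p',q')|\le C\bigl(d_0(p,p')^{\gamma/2}+d_0(q,q')^{\gamma/2}\bigr)
\]
uniformly in $t$. Passing to the limit gives the same H\"older modulus for $d_\infty$ on $X\times X$ with the product $d_0$ topology. In particular, $d_\infty$ is continuous.

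For uniform convergence, we use the standard equicontinuity-plus-pointwise-convergence argument on the compact space $X\times X$. Given $\varepsilon>0$, take a finite $\delta$-net $\{(p_a,q_a)\}$ for $d_0$ with $2C\delta^{\gamma/2}<\varepsilon/3$. Then choose $t_0$ such that $|d_t-d_\infty|<\varepsilon/3$ at all net points for $t>t_0$. The uniform modulus for $d_t$ and for $d_\infty$ then gives $|d_t-d_\infty|<\varepsilon$ everywhere. Alternatively, Arzel\`a--Ascoli applied to $\{d_t\}$, together with uniqueness of the pointwise limit, gives the same conclusion.

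The main step is the existence of the limit. Distances along the flow are not monotone, so some substitute is needed, and the error term $C_0\sqrt{t-s}\le C_0\sqrt{1-s}$ in Lemma~\ref{lem:expansion} is exactly what provides it, since it vanishes as $s\to1$ uniformly in $t$. The remaining steps are routine once the time-uniform H\"older estimate \eqref{eq:distanceholder} is available.
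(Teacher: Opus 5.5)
Your proposal is correct and follows the paper's proof essentially step for step. Like the paper, you get existence of the limit from Lemma~\ref{lem:expansion} via the limsup/liminf argument, and you get continuity and uniform convergence from the time-uniform H\"older modulus \eqref{eq:distanceholder} together with the compactness of $X\times X$.
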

\begin{proof}
Fix $p,q\in X$. The uniform diameter bound implies that
$L=\limsup_{t\rightarrow1}d_t(p,q)$ is finite. Choose a sequence
$s_j\rightarrow1$ such that $d_{s_j}(p,q)\to L$.
For $t>s_j$, Lemma~\ref{lem:expansion} implies
\[
 d_t(p,q)\ge d_{s_j}(p,q)-C\sqrt{1-s_j}.
\]
Taking the lower limit as $t\rightarrow1$ and then letting
$j\to\infty$ shows that the lower and upper limits agree.

The triangle inequality and \eqref{eq:distanceholder} give
\begin{equation}\label{eq:pairequicont}
 |d_t(p,q)-d_t(p',q')|
 \le C d_0(p,p')^{\gamma/2}+C d_0(q,q')^{\gamma/2}.
\end{equation}
Together with the compactness of $X\times X$, this uniform modulus of
continuity implies that the pointwise convergence is uniform. Symmetry
and the triangle inequality pass to the limit, and
\eqref{eq:pairequicont} also shows that $d_\infty$ is continuous.
\end{proof}

We next identify the pairs of points at zero $d_\infty$-distance. Here
the Schwarz estimate and the fibre diameter bound play complementary
roles.

\begin{corollary} \label{cor:separation}
There is a metric $D_Y$ inducing the complex-analytic topology of $Y$
and a constant $C>0$ such that
\begin{equation}\label{eq:separation}
 D_Y(\Phi(p),\Phi(q))\le C d_t(p,q)
       \qquad(p,q\in X,\ 0\le t<1).
\end{equation}
Consequently,
\begin{equation}\label{eq:fibrerelation}
 d_\infty(p,q)=0\quad\Longleftrightarrow\quad\Phi(p)=\Phi(q).
\end{equation}
\end{corollary}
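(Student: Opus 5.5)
The natural route to \eqref{eq:separation} is a Schwarz-lemma estimate: $\tr_{\omega(t)}\theta\le C$ from \eqref{eq:basic} gives the pointwise bound $\theta\le C\omega(t)$, i.e.\ $\Phi^*\theta_Y\le C\omega(t)$ as forms on $X$. To turn this into a distance comparison, I would construct $D_Y$ as a length-type metric on $Y$ adapted to $\theta_Y$. Concretely, I would embed $Y$ locally (or, using finitely many charts, piece together) via local ambient potentials, and choose a continuous metric $D_Y$ on $Y$ inducing its analytic topology such that $D_Y(\Phi(\gamma(0)),\Phi(\gamma(1)))\le C\,\mathrm{Length}_{\Phi^*\theta_Y}(\gamma)$ for every piecewise smooth curve $\gamma$ in $X$.

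The cleanest choice I would try first is
\[
 D_Y(y,y')=\inf\bigl\{\mathrm{Length}_{\theta}(\gamma):\ \gamma\subset X \text{ piecewise smooth},\ \Phi(\gamma(0))=y,\ \Phi(\gamma(1))=y'\bigr\},
\]
with chains of curves allowed to jump within fibres. Fibres have zero $\theta$-length because $\theta$ vanishes along fibre directions. An alternative that is easier to check is to cover $Y$ by finitely many local embeddings $Y\supset V_i\hookrightarrow\mathbb C^{N_i}$ on which $\theta_Y$ is the restriction of a smooth Kähler form $\Theta_i\ge c\,\omega_{\mathrm{Euc}}$. Then each $\Phi|_{\Phi^{-1}V_i}$ becomes a holomorphic map into $\mathbb C^{N_i}$ with $|d\Phi|^2_{g(t)}\le C\tr_{\omega(t)}\Phi^*\Theta_i\le C$, hence locally Lipschitz from $d_t$ into the Euclidean distance, uniformly in $t$. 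Gluing the local Euclidean distances by chains (a Lebesgue-number argument on $X$, where $\Phi$ is uniformly continuous with respect to $d_0$ and $d_t\le Cd_0^{\gamma/2}$) gives a chain metric $D_Y$. This metric induces the analytic topology because locally it is comparable to the Euclidean distance on the embedded germs.

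For the gluing, I would split into two cases. If $d_t(p,q)$ is below a fixed threshold, I would want $p,q$ to lie in a common $\Phi^{-1}V_i$ where the local Lipschitz bound applies. This requires a minimizing $d_t$-geodesic from $p$ to $q$ to stay inside some $\Phi^{-1}V_i$. I would instead integrate the pointwise bound $|d\Phi|\le C$ along the geodesic, subdividing it into pieces each contained in a single chart. Then $D_Y\le C\cdot$length follows directly from the chain definition, without any threshold. For large $d_t(p,q)$, the bound is trivial from the compactness of $Y$: $D_Y\le\diam D_Y\le C'\,d_t(p,q)$ once $d_t(p,q)\ge\delta$. So the main obstacle is defining $D_Y$ so that the local Lipschitz estimates chain together along curves. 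Taking $D_Y$ to be the induced length (chain) metric of the local embedded Euclidean metrics, which is compatible with the topology by normality and compactness, handles this.

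For \eqref{eq:fibrerelation}, suppose $d_\infty(p,q)=0$. Letting $t\to1$ in \eqref{eq:separation} gives $D_Y(\Phi(p),\Phi(q))=0$, hence $\Phi(p)=\Phi(q)$. Conversely, if $\Phi(p)=\Phi(q)=y$, Proposition~\ref{prop:allfibres} gives $d_t(p,q)\le C\sqrt{1-t}\to0$, and Lemma~\ref{lem:distancelimit} then yields $d_\infty(p,q)=0$.
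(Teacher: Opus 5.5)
Your overall route matches the paper's. The Schwarz bound $\tr_{\omega(t)}\theta\le C$ controls the differential of local ambient coordinates composed with $\Phi$, one integrates along curves, and the converse in \eqref{eq:fibrerelation} follows from Proposition~\ref{prop:allfibres} and Lemma~\ref{lem:distancelimit} exactly as you say.

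The gap is your claim that the chain metric $D_Y$ is a metric inducing the analytic topology ``because locally it is comparable to the Euclidean distance'' and ``by normality and compactness''. Only the upper bound is local. Nondegeneracy is a global statement about chains. A chain may change charts and exploit the edge of a chart, where points of $Y$ that are far apart can have arbitrarily close images. For example, $\iota(z)=(z^2,z-z^3)$ embeds the unit disc $\Delta$ as a closed submanifold of $\mathbb{C}^2\setminus\iota(\partial\Delta)$, yet $|\iota(r)-\iota(-r)|=2r(1-r^2)\to0$ as $r\to1$. If a chart of this kind is used on a coordinate disc in $Y$, your chain metric assigns distance zero to the distinct points $\pm1$. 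Normality plays no role here, and the same issue affects your first candidate, the $\theta$-length quotient.

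This matters precisely for the forward implication in \eqref{eq:fibrerelation}, which says that $D_Y(\Phi(p),\Phi(q))=0$ forces $\Phi(p)=\Phi(q)$. Separately, the inequality $d_t\le Cd_0^{\gamma/2}$ cannot localize $d_t$-close pairs, since it points the wrong way. Your eventual subdivision of a curve needs only uniform continuity of its parametrization and a Lebesgue number for $\{\Phi^{-1}V_i\}$.

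What is missing is a single object that separates points of $Y$ globally. You could repair the chain construction by shrinking to relatively compact pieces, proving uniform Lipschitz compatibility of the transition maps there, and running a containment argument for short chains. The paper's device is much simpler. Take finitely many local embeddings $z^{(j)}$ and a smooth partition of unity $\{\chi_j\}$ subordinate to them. Then $F=(\chi_jz^{(j)},\chi_j)_j:Y\to\mathbb{R}^M$ is continuous and injective: if $F(y)=F(y')$ and $\chi_j(y)>0$, then $\chi_j(y')>0$ and $z^{(j)}(y)=z^{(j)}(y')$. Hence $D_Y(y,y')=|F(y)-F(y')|$ is a metric inducing the topology of the compact space $Y$. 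The Schwarz estimate gives $|d(F\circ\Phi)|_{g(t)}\le C$ on all of $X$. Integrating along a minimizing $g(t)$-geodesic then gives \eqref{eq:separation} directly, with no chains or subdivision. With this $D_Y$ in place, the rest of your argument goes through unchanged.
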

\begin{proof}
Using finitely many local ambient coordinates multiplied by smooth
partition-of-unity cutoffs, together with the cutoffs themselves, we
construct a continuous injective map $F:Y\to\mathbb R^M$ whose
components have bounded ambient derivatives. Then
$D_Y(y,y')=|F(y)-F(y')|$ induces the topology of the compact space
$Y$, and the Schwarz estimate gives
\[
 \Phi^*\theta_Y\le C\omega(t),\qquad
 |d(F\circ\Phi)|_{g(t)}\le C.
\]
Integrating along curves in $X$ proves \eqref{eq:separation}, even
when $Y$ is singular. Passing to the limit, we see that
$d_\infty(p,q)=0$ forces $\Phi(p)=\Phi(q)$, and the converse follows
from the fibre diameter bound of Proposition~\ref{prop:allfibres}.
\end{proof}

\Needspace{10\baselineskip}
\begin{proposition}\label{prop:GHall}
There exists a metric $d_T$ on $Y$ inducing its complex-analytic
topology such that, as $t\rightarrow1$,
\begin{equation}\label{eq:uniformdistance}
 \varepsilon(t):=\sup_{p,q\in X}
 |d_t(p,q)-d_T(\Phi(p),\Phi(q))|\longrightarrow0.
\end{equation}
Moreover,
\begin{equation}\label{eq:GHdistortion}
 d_{\GH}((X,d_t),(Y,d_T))\le\tfrac12\varepsilon(t).
\end{equation}
\end{proposition}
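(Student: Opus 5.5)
We define $d_T$ on $Y$ by pushing forward the limiting pseudometric $d_\infty$ of Lemma~\ref{lem:distancelimit}. For $y,y'\in Y$, choose any $p\in\Phi^{-1}(y)$, $q\in\Phi^{-1}(y')$, which is possible since $\Phi$ is surjective, and set $d_T(y,y')=d_\infty(p,q)$. This is independent of the choices. If $\Phi(p)=\Phi(p')$, then $d_\infty(p,p')=0$ by \eqref{eq:fibrerelation}, so the triangle inequality for the pseudometric $d_\infty$ gives $d_\infty(p,q)=d_\infty(p',q)$, and similarly in the second slot. Symmetry and the triangle inequality pass directly from $d_\infty$. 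Positivity also holds: $d_T(y,y')=0$ forces $\Phi(p)=\Phi(q)$, again by \eqref{eq:fibrerelation}. Hence $d_T$ is a metric on $Y$ with $d_T(\Phi(p),\Phi(q))=d_\infty(p,q)$, and \eqref{eq:uniformdistance} is then exactly the uniform convergence $d_t\to d_\infty$ from Lemma~\ref{lem:distancelimit}.

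Next I would show that $d_T$ induces the complex-analytic topology of $Y$. The natural route is to prove that the identity map $(Y,\tau_{an})\to(Y,d_T)$ is continuous; since $Y$ is compact and $d_T$ is Hausdorff, this map is then a homeomorphism. For continuity, note that $d_\infty$ is continuous on $X\times X$. Take a sequence $y_k\to y$ in $Y$ and pick $p_k\in\Phi^{-1}(y_k)$. By compactness of $X$, a subsequence converges to some $p$, and continuity of $\Phi$ gives $\Phi(p)=y$. Then $d_T(y_k,y)=d_\infty(p_k,p)\to0$ along that subsequence. Applying the same argument to every subsequence yields convergence of the full sequence. Alternatively, one can use that $\Phi$ is a closed surjection, so $Y$ carries the quotient topology, and $d_T\circ(\Phi\times\Phi)=d_\infty$ is continuous. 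As a cross-check, \eqref{eq:separation} passes to the limit as $D_Y\le C\,d_T$, which gives continuity of the inverse map directly.

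For the Gromov-Hausdorff bound \eqref{eq:GHdistortion}, I would use the correspondence $\mathcal R=\{(p,\Phi(p)):p\in X\}\subset X\times Y$. This relation projects onto both factors because $\Phi$ is surjective, and its distortion is $\sup_{p,q}|d_t(p,q)-d_T(\Phi(p),\Phi(q))|=\varepsilon(t)$. The standard fact that $d_{\GH}$ is at most half the distortion of any correspondence then gives $d_{\GH}\le\varepsilon(t)/2$.

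I expect the main obstacle to be the topological identification. It relies on the fact that $\Phi$ is a continuous surjection from a compact space, so that $Y$ carries the quotient topology, together with the continuity of $d_\infty$, which came from the uniform Hölder estimate \eqref{eq:distanceholder}. The remaining steps are essentially formal consequences of Lemma~\ref{lem:distancelimit} and Corollary~\ref{cor:separation}.
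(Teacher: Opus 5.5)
Your proposal is correct and follows essentially the same route as the paper. You push $d_\infty$ forward to $Y$ using \eqref{eq:fibrerelation}, take uniform convergence from Lemma~\ref{lem:distancelimit}, identify the topology via continuity of the identity from the compact analytic space to the Hausdorff space $(Y,d_T)$ (the paper uses the closed-surjection/quotient argument that you list as your alternative), and bound $d_{\GH}$ using the graph of $\Phi$ as a correspondence of distortion $\varepsilon(t)$.
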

\begin{proof}
For $y,y'\in Y$, choose $p\in\Phi^{-1}(y)$ and
$q\in\Phi^{-1}(y')$, and define $d_T(y,y')=d_\infty(p,q)$.
The triangle inequality and \eqref{eq:fibrerelation} show that this
definition is independent of the chosen lifts and defines a metric
on $Y$. By \eqref{eq:pairequicont}, the map $\Phi:X\to(Y,d_T)$ is
continuous. Since the original map $\Phi:X\to Y$ is a closed
surjection, the identity from $Y$ with its complex-analytic topology
to $(Y,d_T)$ is continuous, and hence a homeomorphism, since its
domain is compact and its target is Hausdorff.

Uniform convergence in Lemma~\ref{lem:distancelimit} proves
\eqref{eq:uniformdistance}. The graph of $\Phi$ is a surjective
correspondence between $X$ and $Y$, with distortion
$\varepsilon(t)$, which proves \eqref{eq:GHdistortion}. In particular,
$(Y,d_T)$ is a compact length space, since it is a Gromov-Hausdorff
limit of compact length spaces.
\end{proof}

Proposition~\ref{prop:GHall} proves the Gromov-Hausdorff convergence
and the topological assertions of Theorem~\ref{thm:GH}, in both the
collapsed and the noncollapsed cases. The limit metric is the unique
metric on $Y$ whose pullback by $\Phi$ equals $d_\infty$, and
\eqref{eq:uniformdistance} also gives the uniform convergence of
distances stated in Section~\ref{subsec:introGH}. It remains to
establish the more precise description of the limit in the
noncollapsed case.

\bigskip

\noindent{\it Further results in the noncollapsed case.}~
On each compact subset of $X^\circ$, the Schwarz estimate and the
upper bound for the volume form imply that the metrics are uniformly
equivalent to the background metric, and interior parabolic
regularity then yields smooth convergence to $\Phi^*g_T$.

With our normalization $T=1$, the gradient estimate for $U$, its
uniform limit $U\to\varphi_T+n$, and fibre collapse show that
$\varphi_T$ descends to a continuous potential $\psi_T$ on $Y$.
The classical \L{}ojasiewicz distance inequality
\cite[Theorem~6.4 and Remark~6.5]{BMsubanalytic}, applied locally to the defining equations
of $X\times_YX$, together with the H\"older bound on $\varphi_T$,
gives ambient H\"older continuity of $\psi_T$.

Set $S=Y\setminus Y^\circ$. Since $Y$ is normal and connected, its
regular locus is connected, and so is $Y^\circ$, as $S$ is a proper
analytic subset. In particular, $d_T^\circ$ is finite. The local
geometric H\"older estimate
\cite[Section~4 and Section~4.1]{GSS}, with the excluded analytic set
in each finite-projection chart enlarged to contain the image of $S$,
gives $d_T^\circ\le C D_Y^\kappa$, where $D_Y$ is the background
metric in Corollary~\ref{cor:separation}. The Schwarz estimate gives
$D_Y\le C d_T$, hence $d_T^\circ\le C d_T^\kappa$.
Finally, smooth convergence along lifted curves gives
$d_T\le d_T^\circ$.

\section{Partial \texorpdfstring{$C^0$}{C0}-estimates}\label{sec:partialC0}

In this section, we establish the local partial $C^0$-estimate, in
the sense of Tian \cite{Tian90}, by adapting the estimates for
holomorphic sections and the peak-section
construction of \cite[Section~8]{JST}. Two features of the present
setting require some care. First, since no rationality assumption is
made, the polarization is only defined on a neighborhood of the
limiting fibre, and is constructed from the potential of Part I.
Second, the gradient bound for a Ricci potential used in \cite{JST} is
replaced by a short-time correction, and the scalar curvature estimate
of Part I supplies the analytic estimates at the prescribed base
point. Throughout, we use $\hat g$ and $\hat h$ for the scaled metric
and bundle metric, and $\tilde h$ for the weighted bundle metric.

\subsection{}
Fix $x\in X$, let $t_i\rightarrow1$, and put
\[
 \tau_i=1-t_i,\qquad p_i=x,\qquad
 g_{i,t}=\tau_i^{-1}g(1+\tau_i t),\qquad
 \omega_{i,t}=\tau_i^{-1}\omega(1+\tau_i t).
\]
We shall work on the slice $t=-1$, so that
$g_{i,-1}=\tau_i^{-1}g(t_i)$. By the results of
Section~\ref{sec:soliton}, after passing to a subsequence, these
slices converge in the pointed Gromov-Hausdorff topology to a complete
singular K\"ahler-Ricci shrinker, smoothly on its regular locus, and
its metric tangent cones are Ricci-flat K\"ahler cones, again with
smooth convergence on regular compact subsets. We stress that these
geometric results are established independently of the section
estimates below (see also \cite[Theorem~B]{HZ}).

Write $y=\Phi(x)$ and choose Stein neighborhoods
\[
 y\in V_0\Subset V_1\subset Y,
 \qquad D_j=\Phi^{-1}(V_j),
\]
inside an ambient chart on which $\theta_Y=\ddbar\rho$.
The domains $D_j$ have proper smooth plurisubharmonic exhaustions
obtained by pullback from $V_j$, and $\overline{D_0}\subset D_1$.
If $Y$ is a point, take $D_0=D_1=X$ and $\rho=0$.
Set
\[
 L_i=K_X^{-1}|_{D_1},\qquad
 w_i=\frac{U(t_i)+\rho\circ\Phi}{\tau_i}.
\]
Note that the underlying line bundle is independent of $i$. Let
$h_{K^{-1},i}$ denote its metric induced by $\omega(t_i)$, and define
\begin{equation}\label{eq:localbundlemetric}
 h_i=h_{K^{-1},i}e^{-w_i}.
\end{equation}
We write $\Theta(h)=-\ddbar\log h$ for the curvature of a bundle
metric. By \eqref{eq:Ucurv},
\begin{equation}\label{eq:exactpolarization}
 \Theta(h_i)=\Ric(\omega(t_i))+\ddbar w_i=\omega_{i,-1}.
\end{equation}
Thus $h_i$ defines a local polarization of the rescaled K\"ahler
metric on $D_1$, without any rationality assumption on the initial
class.

The Schwarz estimate implies that, for every fixed radius $A>0$,
\begin{equation}\label{eq:ballsindomain}
 B_{g_{i,-1}}(p_i,A)\subset D_0
       \qquad\hbox{for all sufficiently large }i.
\end{equation}
Indeed, $d_{g(t_i)}(x,X\setminus D_0)$ has a fixed positive lower
bound, while distances are multiplied by $\tau_i^{-1/2}$ under
rescaling. The same argument gives
\[
 d_{g_{i,-1}}(D_0,X\setminus D_1)\longrightarrow\infty.
\]
When $D_1=X$, there is of course no boundary restriction.

For $k\in\mathbb N$, we use the scaled notation
\[
 \hat g=k g_{i,-1},\qquad
 \hat\omega=k\omega_{i,-1},\qquad
 \hat h=h_i^k,\qquad \Theta(\hat h)=\hat\omega.
\]
We retain the convention $\hat g=2\hat\omega(\cdot,J\cdot)$ of
Section~\ref{sec:background}. In the interior estimates, $\nabla$
is the Chern connection of $\hat h$, with its tensor norms measured
using $\hat g$, $dV=dV_{\hat g}$, $\Delta=\Delta_{\hat g}$, and
$B(q,R)=B_{\hat g}(q,R)$. In particular,
\[
 |\sigma|^2=|\sigma|^2_{\hat h},\qquad
 |\nabla\sigma|^2=|\nabla\sigma|^2_{\hat h,\hat g}.
\]
All constants in these scaled estimates are independent of $i$ and
$k$. For the statement of the final result, we define the Bergman
density at the original blow-up scale:
\begin{equation}\label{eq:localbergman}
 \rho^D_{i,k}(q)=
 \sup_{\substack{\sigma\in H^0(D,L_i^k)\cap L^2(D,h_i^k)\\\sigma\ne0}}
 \frac{|\sigma(q)|_{h_i^k}^2}
      {\int_D|\sigma|_{h_i^k}^2dV_{g_{i,-1}}},\qquad D\subset D_1.
\end{equation}
By the interior estimates below, point evaluation is a bounded linear
functional on this space of sections, so this density is well defined.

\subsection{}
The integration-by-parts argument in \cite[Proposition~8.4]{JST}
relies on a gradient bound for a Ricci potential. In the present
setting, we replace it by a short-time correction, which satisfies the
following quadratic-form estimate.

\begin{lemma}\label{lem:shortpotential}
Let $\omega(u)$ be a smooth K\"ahler-Ricci flow on a compact K\"ahler
manifold $X$ for $-1\le u\le0$, satisfying $|R|\le\Lambda$.
Set $\omega=\omega(0)$.
Then for every $0<\delta\le1$ there exists a smooth function
$v_\delta$ such that
\begin{align}
 \|v_\delta\|_\infty&\le\Lambda\delta/2,\label{eq:vsmall}\\
 \Ric(\omega)+\ddbar v_\delta
 &=\delta^{-1}\bigl(\omega(-\delta)-\omega\bigr)
       \ge-\delta^{-1}\omega,\label{eq:vtwist}\\
 \Delta_\omega v_\delta&\ge-n/\delta-\Lambda.\label{eq:vlap}
\end{align}
Moreover, for every compactly supported smooth real function $F$,
\begin{equation}\label{eq:vform}
 \int |dv_\delta|^2F^2dV
 \le C\delta^2\int|dF|^2dV+C\int F^2dV,
\end{equation}
where all norms and integrals are taken at time zero, and $C$ depends
only on $n,\Lambda$.
\end{lemma}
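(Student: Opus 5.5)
The plan is to build $v_\delta$ as a time average of the volume-form ratio along the flow, so that it interpolates between $\Ric(\omega)$ and the averaged Ricci form over $[-\delta,0]$. Define
\[
 v_\delta=-\frac1\delta\int_{-\delta}^0\log\frac{\omega(u)^n}{\omega^n}\,du .
\]
Since $\Ric(\omega(u))-\Ric(\omega)=-\ddbar\log(\omega(u)^n/\omega^n)$, we get
\[
 \Ric(\omega)+\ddbar v_\delta=\frac1\delta\int_{-\delta}^0\Ric(\omega(u))\,du .
\]
By $\partial_u\omega=-\Ric$, this equals $\delta^{-1}(\omega(-\delta)-\omega)$. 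Because $\omega(-\delta)>0$, we obtain \eqref{eq:vtwist}.

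For \eqref{eq:vsmall}, use $\partial_u\log\omega(u)^n=-R$ and $|R|\le\Lambda$. These give $|\log(\omega(u)^n/\omega^n)|\le\Lambda|u|$, and averaging over $u\in[-\delta,0]$ gives $\|v_\delta\|_\infty\le\Lambda\delta/2$. For \eqref{eq:vlap}, take the trace of \eqref{eq:vtwist} with respect to $\omega$:
\[
 R+\Delta_\omega v_\delta=\delta^{-1}\bigl(\tr_\omega\omega(-\delta)-n\bigr)\ge-n/\delta .
\]
Since $R\le\Lambda$, this gives $\Delta_\omega v_\delta\ge-n/\delta-\Lambda$.

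The quadratic-form estimate \eqref{eq:vform} is the main point. The difficulty is that $\Delta v_\delta$ is bounded only from below: the term $P:=\delta^{-1}\tr_\omega\omega(-\delta)\ge0$ has no upper bound. I would resolve this by shifting $v_\delta$ to have a sign. Set $w=v_\delta+\Lambda\delta/2$, so that $0\le w\le\Lambda\delta$ and $dw=dv_\delta$.

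Integrating by parts against $F^2$, with constants absorbing the factor between $\tr_\omega\ddbar$ and the Riemannian Laplacian, gives
\[
 \int|dw|^2F^2\,dV=-c\int wF^2\Delta w\,dV-2\int wF\langle dw,dF\rangle\,dV .
\]
Write $\Delta w=P-n/\delta-R$. Since $w\ge0$ and $P\ge0$, the contribution $-\int wF^2P$ is nonpositive and can be dropped. The remaining part of the first term is at most
\[
 \Lambda\delta\,(n/\delta+\Lambda)\int F^2\le C\int F^2 ,
\]
using $\delta\le1$. For the cross term, Cauchy-Schwarz and $|w|\le\Lambda\delta$ give the bound
\[
 \tfrac12\int|dw|^2F^2+2\Lambda^2\delta^2\int|dF|^2 .
\]
Absorbing $\tfrac12\int|dw|^2F^2$ into the left-hand side yields \eqref{eq:vform} with $C=C(n,\Lambda)$.

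The only care needed is the choice of shift sign, which makes the uncontrolled positive part of $\Delta v_\delta$ enter with a favorable sign, together with routine bookkeeping of the metric-convention constants.
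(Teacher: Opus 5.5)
Your proposal is correct and is essentially the paper's proof. Your $v_\delta$ coincides with the paper's $\delta^{-1}\int_{-\delta}^0(\ell(0)-\ell(u))\,du$. The energy estimate then proceeds exactly as in the paper: shift $v_\delta$ to a function with values in $[0,\Lambda\delta]$, integrate by parts against $F^2$, use only the lower bound on $\Delta v_\delta$ (equivalently, drop the nonnegative term $P$), and absorb the cross term by Young's inequality.

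The only differences are cosmetic. You shift by $\Lambda\delta/2$ rather than by $-\inf_X v_\delta$. You also get the $L^\infty$ bound from the pointwise estimate $|\log(\omega(u)^n/\omega^n)|\le\Lambda|u|$, where the paper uses the Fubini formula $v_\delta=-\delta^{-1}\int_{-\delta}^0(u+\delta)R(u)\,du$.
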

\begin{proof}
Set
\[
 \ell(u)=\log\frac{\omega(u)^n}{\omega(-\delta)^n},\qquad
 v_\delta=\frac1\delta\int_{-\delta}^0
                    \bigl(\ell(0)-\ell(u)\bigr)\,du.
\]
Since $\ell_u=-R$, changing the order of integration gives
\[
 v_\delta=-\frac1\delta\int_{-\delta}^0(u+\delta)R(u)\,du,
 \qquad
 |v_\delta|\le\frac\Lambda\delta
                 \int_{-\delta}^0(u+\delta)\,du
              =\frac{\Lambda\delta}{2}.
\]
This proves \eqref{eq:vsmall}. Moreover,
\[
 \Ric(\omega(u))-\Ric(\omega(0))
       =\ddbar\bigl(\ell(0)-\ell(u)\bigr).
\]
Averaging this identity and integrating the flow equation yields
\[
 \Ric(\omega)+\ddbar v_\delta
 =\frac1\delta\int_{-\delta}^0\Ric(\omega(u))\,du
 =\frac{\omega(-\delta)-\omega(0)}{\delta}.
\]
The positivity of $\omega(-\delta)$ proves \eqref{eq:vtwist}.
Taking the trace with respect to $\omega(0)$ and using
$R(0)\le\Lambda$ gives \eqref{eq:vlap}.

To prove the energy estimate, put
$b_\delta=v_\delta-\inf_Xv_\delta$, so that
$0\le b_\delta\le\Lambda\delta$ and
$db_\delta=dv_\delta$, $\Delta b_\delta=\Delta v_\delta$.
Set $J=\int F^2|db_\delta|^2dV$. Integration by parts gives
\[
 J=-\int b_\delta F^2\Delta b_\delta\,dV
       -2\int b_\delta F\langle db_\delta,dF\rangle\,dV.
\]
The lower bound for $\Delta b_\delta$ and Young's inequality imply
\[
 J\le\Lambda\delta(n/\delta+\Lambda)\int F^2dV
       +\tfrac12J+2\Lambda^2\delta^2\int|dF|^2dV.
\]
Absorbing $J/2$ into the left-hand side and using $\delta\le1$
proves \eqref{eq:vform}.
\end{proof}

We apply Lemma~\ref{lem:shortpotential} to
$\omega(u)=\omega_{i,-1+u}$ on $[-1,0]$, after discarding finitely
many indices, this is possible since Part I gives a uniform scalar
curvature bound $|R|\le\Lambda$. Denoting the corrections by
$v_{i,\delta}$ and writing $v_i=v_{i,1}$, we have in particular
\[
 \|v_i\|_\infty\le\Lambda/2,\qquad
 \Ric(\omega_{i,-1})+\ddbar v_i
                    =\omega_{i,-2}-\omega_{i,-1}.
\]
Thus the potential $w_i$ in \eqref{eq:localbundlemetric} defines the
polarization, while $v_i$ will provide a bounded weight for the
$L^2$ estimate. For the scaled metric $\hat g$, we apply the same
lemma to
\[
 \hat\omega_i(u)=k\omega_{i,-1+u/k},\qquad -1\le u\le0.
\]
Its scalar curvature is bounded by $\Lambda/k$, and its correction
$\widehat v_\delta=v_{i,\delta/k}$ satisfies
\eqref{eq:vsmall}-\eqref{eq:vform} in the metric $\hat g$, with
uniform constants.

\begin{proposition} \label{prop:localanalytic}
For every $0<r<R<\infty$, there exists $K=K(r,R)>0$ such that, for all
$k\ge2$ and every holomorphic section $\sigma$ of $L_i^k$ on
$B(q_0,R)\Subset D_1$,
\begin{equation}\label{eq:sectionbounds}
 \begin{aligned}
 \|\sigma\|_{L^\infty(B(q_0,r),\hat h)}
       &\le K\|\sigma\|_{L^2(B(q_0,R),\hat h,\hat g)},\\
 \|\nabla\sigma\|_{L^\infty(B(q_0,r),\hat h,\hat g)}
       &\le K\|\sigma\|_{L^2(B(q_0,R),\hat h,\hat g)}.
 \end{aligned}
\end{equation}
Here the constant $K$ is independent of $i$, $k$, and $q_0$.
\end{proposition}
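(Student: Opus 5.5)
Both estimates in \eqref{eq:sectionbounds} will come from Moser iteration, driven by the uniform local Sobolev inequality \eqref{eq:sobolev}. Under the scaling $\hat g=kg_{i,-1}$ with $k\ge2$, the constant in \eqref{eq:sobolev} only improves, and the volume noninflating bound at unit scale controls $\operatorname{Vol}B(q_0,R)$ uniformly in $i$, $k$ and $q_0$. We work on balls $B(q_0,R)\Subset D_1$, so that cutoff functions supported in these balls are admissible test functions.

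\textbf{$L^\infty$ bound.} For a holomorphic section $\sigma$ of $L_i^k$ with $\Theta(\hat h)=\hat\omega$, the Bochner--Kodaira identity gives
\[
\Delta|\sigma|^2=|\nabla\sigma|^2-n|\sigma|^2 .
\]
The factor $n$ arises from the trace of the curvature, up to the factor fixed by our convention $\hat g=2\hat\omega(\cdot,J\cdot)$. In particular $\Delta|\sigma|^2\ge-n|\sigma|^2$, so $u=|\sigma|^2$ is a nonnegative subsolution of $\Delta u+nu\ge0$. Testing against $\eta^2u^{p-1}$ with $\eta$ a cutoff between concentric balls, and iterating \eqref{eq:sobolev} with exponent $\chi=n/(n-1)$ over shrinking radii from $R$ down to $r$, gives $\sup_{B(q_0,r)}u\le K\int_{B(q_0,R)}u$. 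This iteration needs no curvature information beyond the Sobolev inequality.

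\textbf{Gradient bound.} Here the Bochner formula for $|\nabla\sigma|^2$ involves $\Ric(\hat\omega)$, which is not bounded, and this is the main obstacle. Following \cite[Proposition~8.4]{JST}, I would compute
\[
\Delta|\nabla\sigma|^2\ge|\nabla\nabla\sigma|^2-C|\nabla\sigma|^2+\Ric(\hat\omega)(\nabla\sigma,\overline{\nabla\sigma}).
\]
I would then replace $\Ric(\hat\omega)$ using Lemma~\ref{lem:shortpotential} applied to the scaled flow $\hat\omega_i(u)$ with $\delta=1$. This yields $\Ric(\hat\omega)=-\hat\omega+\hat\omega(-1)-\ddbar\widehat v$, in which $\hat\omega(-1)\ge0$ and $\|\widehat v\|_\infty\le\Lambda/(2k)$. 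The positive term is harmless for a subsolution estimate. The term $-\ddbar\widehat v(\nabla\sigma,\overline{\nabla\sigma})$ is moved onto $\widehat v$ by integrating by parts in the weak formulation of the Moser iteration. This produces terms of the form $\int|d\widehat v|\,|\nabla\sigma|\,|\nabla\nabla\sigma|\,\eta^2u^{p-1}$. Young's inequality absorbs the $|\nabla\nabla\sigma|$ factor into the good Hessian term. What remains is $\int|d\widehat v|^2F^2$ with $F=\eta|\nabla\sigma|^{p}$, and the quadratic-form estimate \eqref{eq:vform} bounds it by $C\int|dF|^2+C\int F^2$, with the small factor $\delta^2$ available. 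For $\delta$ small this is absorbed into the Sobolev side, at the cost of $p$-dependent constants that grow only polynomially, so the iteration still closes. The cross terms involving $d|\sigma|$ are handled the same way.

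\textbf{Conclusion.} The iteration gives $\sup_{B(q_0,r)}|\nabla\sigma|^2\le K\int_{B(q_0,r')}|\nabla\sigma|^2$ for some $r<r'<R$. The integrated Bochner identity with a cutoff then gives
\[
\int\eta^2|\nabla\sigma|^2\le C\int_{B(q_0,R)}|\sigma|^2 ,
\]
which completes both estimates. The delicate point throughout is to check that every constant depends only on the Sobolev constant, $\Lambda$, $n$ and the radii, and not on $i$, $k$ or $q_0$. This is guaranteed by the uniform bounds of Lemma~\ref{lem:shortpotential} applied to $\hat\omega_i(u)$.
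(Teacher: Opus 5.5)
Your outline follows the paper's proof step for step. The $L^\infty$ bound comes from Bochner plus Moser for $|\sigma|^2$. For $q=|\nabla\sigma|^2$, $\Ric(\hat\omega)$ is replaced via Lemma~\ref{lem:shortpotential}, the $\ddbar\widehat v$ term is integrated by parts, Young's inequality is applied, \eqref{eq:vform} is used with $F=\eta q^{p/2}$, and the integrated Bochner identity finishes. However, the step the paper singles out as the key point does not work as you wrote it.

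You fix $\delta=1$ in $\Ric(\hat\omega)=-\hat\omega+\hat\omega(-1)-\ddbar\widehat v$, and then appeal to ``the small factor $\delta^2$'' in \eqref{eq:vform}. But \eqref{eq:vform} must be applied to the same function you integrated by parts, namely $\widehat v_1$, and for it that factor is $1$. Count the powers of $p$:
\begin{itemize}
\item The derivative falling on $q^{p-1}$ brings a factor $p$. So after absorbing $|\nabla\nabla\sigma|$ into the Hessian term, Young leaves $Cp^2\int\eta^2|d\widehat v_\delta|^2q^p$.
\item \eqref{eq:vform} with $F=\eta q^{p/2}$ turns this into $Cp^4\delta^2\int\eta^2q^{p-2}|dq|^2$ plus harmless terms.
\item This must be absorbed by $c(p-1)\int\eta^2q^{p-2}|dq|^2$ on the left. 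That is impossible for large $p$ if $\delta$ is independent of $p$.
\end{itemize}
A more careful split, pairing the $dq$-term directly with that left-hand term, still leaves $p^3\delta^2$. So no fixed choice of $\delta$ closes the iteration at the large exponents Moser needs.

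The fix, which is what the paper does, is to let the auxiliary correction depend on the iteration exponent. Take $\delta=\delta_*p^{-2}$ and use $\widehat v_\delta$ consistently in both places: in the lower bound $\Ric(\hat\omega)\ge-\delta^{-1}\hat\omega-\ddbar\widehat v_\delta$ from \eqref{eq:vtwist}, and in \eqref{eq:vform}. This is legitimate because only $\widehat v_\delta$ changes with $p$, while the metric, the bundle metric and the section stay fixed.

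The price is the zeroth-order term
$C\delta^{-1}\int_{\operatorname{supp}\eta}q^p=C\delta_*^{-1}p^2\int_{\operatorname{supp}\eta}q^p$.
One then gets
$\int|d(\eta q^{p/2})|^2\le Cp^3(1+\|d\eta\|_\infty^2)\int_{\operatorname{supp}\eta}q^p$,
which is polynomial in $p$, so the Moser product converges. Your remark about polynomially growing $p$-dependent constants suggests you had this in mind. It has to be built in explicitly, though, since it is exactly where the argument lives.

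A smaller point: the iteration started at $p=2$ gives $\sup q\lesssim\|q\|_{L^2}$. Passing to $\int q$ needs the standard step $\|q\|_{L^2}^2\le(\sup q)\int q$, followed by Young's inequality and iteration in the radius.
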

\begin{proof}
By Section~\ref{sec:background}, the Sobolev inequality for $\hat g$
is uniform. Since the bundle curvature is $\hat\omega$, we may use
the Bochner formulas for sections of \cite[Lemma~8.2]{JST}, with the
dimensional constants adjusted to our metric convention. Write
$B_r=B(q_0,r)$ and $B_R=B(q_0,R)$. The Bochner identity for sections
gives
\[
 \Delta|\sigma|^2\ge-C|\sigma|^2.
\]
The Sobolev inequality and Moser iteration therefore give
\[
 \sup_{B_r}|\sigma|^2\le C(r,R)\int_{B_R}|\sigma|^2.
\]
For the derivative estimate, we adapt the argument of
\cite[Lemma~8.2 and Proposition~8.4]{JST} using \eqref{eq:vform}. Set
$a=\nabla^{1,0}\sigma$ and $q=|a|^2$, use $\nabla$ also for the full
induced covariant derivative on this bundle-valued one-form, including
its $(0,1)$-part, which is a curvature term in $\sigma$. The Bochner
formula for $q$ contains a positive term controlling $|\nabla a|^2$,
a Ricci contraction with $a\otimes\bar a$, and a zeroth-order term
bounded below by $-Cq$, since the bundle curvature is parallel, its
covariant derivatives vanish. By \eqref{eq:vtwist},
\[
 \Ric(\hat\omega)\ge-\ddbar \widehat v_\delta-\delta^{-1}\hat\omega.
\]
For a nonnegative cutoff $\eta$ and $p\ge2$, define, suppressing the
volume form,
\[
 E_p=\int\eta^2q^{p-2}|dq|^2,\qquad
 H_p=\int\eta^2q^{p-1}|\nabla a|^2,\qquad
 Q_p=\int_{\operatorname{supp}\eta}q^p.
\]
Testing the Bochner inequality against $\eta^2q^{p-1}$ and absorbing
the cutoff cross term yields
\[
 c(p-1)E_p+cH_p
 \le C(1+\delta^{-1})Q_p
       +C\int|d\eta|^2q^p+C|I_p|,
\]
where
\[
 I_p=\int\eta^2q^{p-1}
             \langle\ddbar \widehat v_\delta,a\otimes\bar a\rangle.
\]
Here the contraction is taken with respect to the K\"ahler metric and
the bundle metric, and $c,C>0$ are independent of $i,k,p,\delta$.

To estimate $I_p$, we integrate one derivative of
$\ddbar \widehat v_\delta$ by parts. This produces derivatives of
$\eta^2$, $q^{p-1}$, and $a\otimes\bar a$, the last two of which are
controlled using $|dq|\le2\sqrt q\,|\nabla a|$. Consequently,
\[
 |I_p|\le Cp\int\eta^2|d\widehat v_\delta||\nabla a|q^{p-1/2}
              +C\int\eta|d\eta||d\widehat v_\delta|q^p.
\]
Define
\[
 J_p=\int\eta^2|d\widehat v_\delta|^2q^p.
\]
Young's inequality, with a fixed sufficiently small coefficient
in front of $H_p$, gives
\[
 C|I_p|\le\tfrac c2H_p+Cp^2J_p+C\int|d\eta|^2q^p.
\]
We now apply \eqref{eq:vform} with $F=\eta q^{p/2}$:
\[
 \begin{split}
 J_p&\le C\delta^2\int|d(\eta q^{p/2})|^2+CQ_p\\
    &\le C\delta^2p^2E_p
          +C\delta^2\int|d\eta|^2q^p+CQ_p.
 \end{split}
\]
Combining these inequalities, and using $\delta\le1$, yields
\[
 c(p-1)E_p+\tfrac c2H_p
 \le C(1+\delta^{-1}+p^2)(1+\|d\eta\|_\infty^2)Q_p
       +Cp^4\delta^2E_p.
\]
The key point is now the choice
\[
 \delta=\delta_*p^{-2},
\]
where $0<\delta_*\le1$ is a fixed, sufficiently small constant. Then
$Cp^4\delta^2=C\delta_*^2\le c/2$, so that the final term is absorbed
by $c(p-1)E_p$. Since $\delta^{-1}=\delta_*^{-1}p^2$, it follows that
\[
 E_p\le Cp(1+\|d\eta\|_\infty^2)Q_p,
 \qquad
 \int|d(\eta q^{p/2})|^2
       \le Cp^3(1+\|d\eta\|_\infty^2)Q_p.
\]
We emphasize that only the auxiliary correction $\widehat v_\delta$
depends on the iteration exponent $p$, while the metric, the bundle
metric, and the section remain fixed, the choice
$\delta=\delta_*p^{-2}$ provides precisely the small coefficient
needed to absorb the gradient term uniformly throughout the iteration.
The above calculations are justified at the zeros of $q$ by replacing
$q$ with $q+\varepsilon$ and then letting $\varepsilon\rightarrow0$.
Moreover, since only first derivatives of the cutoffs are needed,
Lipschitz distance cutoffs and smooth approximation suffice.

Let $\beta>1$ be an exponent in the uniform Sobolev inequality, and
choose a cutoff with $\eta=1$ on $B_r$, supported in $B_R$, and
$|d\eta|\le C/(R-r)$. We obtain
\[
 \|q\|_{L^{p\beta}(B_r)}
 \le\bigl[Cp^3(1+(R-r)^{-2})\bigr]^{1/p}
                         \|q\|_{L^p(B_R)}.
\]
Moser iteration with $p_l=2\beta^l$ and $r_l=r+(R-r)2^{-l}$ then
gives an interior $L^2$-to-$L^\infty$ bound for $q$. To replace the
$L^2$ norm on the right-hand side by an $L^1$ norm, we use
\[
 \|q\|_{L^2(B_R)}^2
       \le\bigl(\sup_{B_R}q\bigr)\int_{B_R}q
\]
on nested interior balls, and then apply Young's inequality and
iterate in the radius. Thus, for $r<R'<R$,
\[
 \sup_{B_r}q\le C(r,R')\int_{B_{R'}}q.
\]
Finally, the Bochner identity for sections gives
$\Delta|\sigma|^2\ge c_0q-C_0|\sigma|^2$, with fixed dimensional
constants $c_0>0$ and $C_0\ge0$. Multiplying this inequality by the
square of a cutoff equal to one on $B_{R'}$ and supported in $B_R$,
and using integration by parts, the bound
$|d|\sigma|^2|\le C|\sigma|\sqrt q$, and Young's inequality, we obtain
\[
 \int_{B_{R'}}q\le C(R',R)\int_{B_R}|\sigma|^2.
\]
Combining the last two estimates proves \eqref{eq:sectionbounds}.
\end{proof}

\begin{corollary}\label{cor:sectionD0}
For each fixed $k\ge2$, there exists $C_k>0$ such that, for all
sufficiently large $i$, every
$\sigma\in H^0(D_1,L_i^k)\cap L^2(D_1,h_i^k)$ satisfies
\begin{equation}\label{eq:sectionD0}
 \sup_{D_0}\bigl(|\sigma|_{h_i^k}
                 +|\nabla\sigma|_{h_i^k,g_{i,-1}}\bigr)
 \le C_k\|\sigma\|_{L^2(D_1,h_i^k,g_{i,-1})}.
\end{equation}
\end{corollary}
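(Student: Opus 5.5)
The plan is to deduce the corollary from Proposition~\ref{prop:localanalytic}, applied with radii fixed in the scaled metric $\hat g=kg_{i,-1}$ and centres ranging over all of $D_0$. The unscaled statement then follows by undoing the factor $k$. Since $k$ is fixed, every power of $k$ that appears can be absorbed into $C_k$.

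The first step is to check that a ball of fixed $\hat g$-radius about any point of $D_0$ lies in $D_1$ for all large $i$. By the consequence of the Schwarz estimate recorded after \eqref{eq:ballsindomain}, namely $d_{g_{i,-1}}(D_0,X\setminus D_1)\to\infty$, we have $d_{\hat g}(D_0,X\setminus D_1)=\sqrt k\,d_{g_{i,-1}}(D_0,X\setminus D_1)\to\infty$. So for $i\ge i_0(k)$, every $q_0\in D_0$ satisfies $B_{\hat g}(q_0,2)\Subset D_1$. When $D_1=X$ there is no boundary and this step is vacuous.

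The second step applies Proposition~\ref{prop:localanalytic} with $r=1$, $R=2$ to $\sigma$ restricted to $B_{\hat g}(q_0,2)$. This gives
\[
 |\sigma(q_0)|_{h_i^k}+|\nabla\sigma(q_0)|_{h_i^k,\hat g}
 \le 2K\|\sigma\|_{L^2(B_{\hat g}(q_0,2),\hat h,\hat g)}
 \le 2K\|\sigma\|_{L^2(D_1,h_i^k,\hat g)}.
\]
Here $K$ does not depend on $i$, $k$ or $q_0$. We then convert back to $g_{i,-1}$. The volume satisfies $dV_{\hat g}=k^n dV_{g_{i,-1}}$, so the $L^2$ norm gains a factor $k^{n/2}$. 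For the covariant derivative of a section, which is a bundle-valued one-form, $|\nabla\sigma|_{h_i^k,g_{i,-1}}=\sqrt k\,|\nabla\sigma|_{h_i^k,\hat g}$. The Chern connection itself is the same for both metrics, since it depends only on $\hat h=h_i^k$. Combining these gives \eqref{eq:sectionD0} with $C_k=4Kk^{(n+1)/2}$, uniformly for $q_0\in D_0$. Taking the supremum over $q_0$ completes the argument.

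The only delicate point is the first step: the radius has to be uniform over all of $D_0$ and not just near $p_i$. The separation statement $d_{g_{i,-1}}(D_0,X\setminus D_1)\to\infty$ supplies exactly this uniformity. It holds because $\overline{D_0}\subset D_1$ and, by the Schwarz estimate, $\Phi$ is Lipschitz from $(X,d_{g(t)})$ to $(Y,D_Y)$. Hence $d_{g(t_i)}(D_0,X\setminus D_1)\ge c>0$, and rescaling by $\tau_i^{-1/2}$ sends this to infinity. Finiteness of the $L^2$ norm on $D_1$ is assumed in the hypothesis, so nothing further is needed.
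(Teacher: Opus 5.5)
Your proposal is correct and matches the paper's argument. For large $i$, every point of $D_0$ is the centre of a ball of fixed $\hat g$-radius compactly contained in $D_1$, because $d_{g_{i,-1}}(D_0,X\setminus D_1)\to\infty$; Proposition~\ref{prop:localanalytic} then applies on these balls, and rescaling from $\hat g=kg_{i,-1}$ back to $g_{i,-1}$ costs only the factors $k^{n/2}$ and $\sqrt k$, which you track correctly (in fact $2Kk^{(n+1)/2}$ already suffices) and absorb into $C_k$.
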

\begin{proof}
For all sufficiently large $i$, each point of $D_0$ is the center
of a ball of fixed radius contained in $D_1$. Applying
Proposition~\ref{prop:localanalytic} on these balls and rescaling
from $\hat g$ to $g_{i,-1}$, we obtain the estimate, with constants
depending on $k$.
\end{proof}

\subsection{}
Following \cite[Proposition~8.6]{JST}, we introduce the weighted metric
\[
 \tilde h=\hat h e^{-v_i}=h_i^ke^{-v_i}.
\]
This metric is used only to solve the $\bar\partial$ equation in
$L^2$, while derivatives of sections continue to be measured using
$\hat h$ and $\hat g$.

\begin{proposition}[$L^2$ estimate]\label{prop:dbar}
There exists $C>0$, independent of $i$ and $k\ge2$, such that every
smooth compactly supported $L_i^k$-valued $(0,1)$-form $\tau$ on
$D_1$ with $\bar\partial\tau=0$ admits a section $\nu$ satisfying
$\bar\partial\nu=\tau$ and
\begin{equation}\label{eq:dbar}
 \int_{D_1}|\nu|_{\hat h}^2dV
       \le C\int_{D_1}|\tau|_{\hat h,\hat g}^2dV.
\end{equation}
\end{proposition}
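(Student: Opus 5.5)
We will solve the equation with Hörmander's weighted $L^2$ estimate on the domain $D_1$, using the weighted metric $\tilde h=\hat h e^{-v_i}$. Since $v_i$ is bounded uniformly, $\tilde h$ and $\hat h$ are uniformly equivalent, so $L^2$ estimates for one transfer to the other at the cost of a constant.

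First, $D_1=\Phi^{-1}(V_1)$ is weakly pseudoconvex: it carries the smooth proper plurisubharmonic exhaustion obtained by pulling back one from the Stein set $V_1$. It is also Kähler with the complete-or-not metric $\hat\omega$. By Demailly's version of Hörmander's theorem, which holds on weakly pseudoconvex Kähler manifolds without completeness of the given metric, it suffices to check a curvature positivity condition. When $Y$ is a point, $D_1=X$ is compact and the same theorem applies directly.

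The relevant curvature form for $L_i^k$ twisted by $K_X$ is
\[
 \Theta(\tilde h)+\Ric(\hat\omega).
\]
Equivalently, we view $L_i^k=K_X\otimes(K_X^{-1}\otimes L_i^k)$ and apply Hörmander to $(n,1)$-forms with values in $K_X^{-1}\otimes L_i^k$. Using \eqref{eq:exactpolarization} and the identity $\Ric(\omega_{i,-1})+\ddbar v_i=\omega_{i,-2}-\omega_{i,-1}$, and noting that $\Ric$ is scale invariant, we compute
\[
 \Theta(\tilde h)+\Ric(\hat\omega)
 =k\omega_{i,-1}+\Ric(\omega_{i,-1})+\ddbar v_i
 =(k-1)\omega_{i,-1}+\omega_{i,-2}.
\]
For $k\ge2$ this is at least $\tfrac12\hat\omega$, since $\omega_{i,-2}>0$ and $(k-1)/k\ge1/2$. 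This is exactly why the correction $v_i$ from Lemma~\ref{lem:shortpotential} is introduced: it converts the lower Ricci bound, which is only available after integrating the flow over a unit time interval, into honest positivity.

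Hörmander's estimate then yields $\nu$ with $\bar\partial\nu=\tau$ and
\[
 \int|\nu|^2_{\tilde h}\,dV\le 2\int|\tau|^2_{\tilde h,\hat g}\,dV.
\]
Here we use that $\tau$ is smooth, compactly supported and $\bar\partial$-closed, so the right-hand side is finite and the identification between sections and $K_X$-valued $(n,0)$-forms is isometric up to the metric conventions. Since $\|v_i\|_\infty\le\Lambda/2$, we have $e^{-\Lambda/2}\hat h\le\tilde h\le e^{\Lambda/2}\hat h$. Hence \eqref{eq:dbar} holds with $C=2e^{\Lambda}$, independent of $i$ and $k$.

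The main point needing care is the applicability of Hörmander's theorem on the noncompact, possibly incomplete domain $D_1$. The plan is to invoke Demailly's theorem for weakly pseudoconvex Kähler manifolds, which needs only the psh exhaustion. Alternatively, one can exhaust $D_1$ by sublevel sets, use complete Kähler metrics $\hat\omega+\epsilon\,\ddbar\chi(\psi)$ on each, and pass to a weak limit. The estimate is uniform because the curvature lower bound only improves when the metric is increased in this way.
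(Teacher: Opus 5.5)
Your proposal is correct and is essentially the paper's proof. It uses the same weight $\tilde h=\hat h e^{-v_i}$, the same curvature identity $\Ric(\hat\omega)+\Theta(\tilde h)=(k-1)\omega_{i,-1}+\omega_{i,-2}\ge\tfrac12\hat\omega$, and the same transfer via $e^{-\Lambda/2}\hat h\le\tilde h\le e^{\Lambda/2}\hat h$; the only difference is that you justify H\"ormander's estimate on $D_1$ by citing Demailly's theorem for weakly pseudoconvex K\"ahler manifolds, whereas the paper works on regular sublevel sets of the plurisubharmonic exhaustion, whose boundaries are pseudoconvex, and passes to a weak limit. One correction to your alternative argument: uniformity under $\hat\omega\mapsto\hat\omega+\epsilon\,\ddbar\chi(\psi)$ comes from the monotonicity in the K\"ahler metric of $|u|^2dV$ and $\langle A^{-1}u,u\rangle dV$ for $(n,q)$-forms, with the bundle metric on $K_X^{-1}\otimes L_i^k$ held fixed, and not from the curvature lower bound improving, since relative to the larger metric that bound can only get worse.
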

\begin{proof}
Since the Ricci form is unchanged by constant scaling,
\[
 \begin{split}
 \Ric(\hat\omega)+\Theta(\tilde h)
   &=\Ric(\omega_{i,-1})+k\omega_{i,-1}+\ddbar v_i\\
   &=(k-1)\omega_{i,-1}+\omega_{i,-2}
     \ge\tfrac12\hat\omega.
 \end{split}
\]
The weighted Kodaira-H\"ormander estimate therefore gives
\[
 \bar\partial\nu=\tau,\qquad
 \int_{D_1}|\nu|_{\tilde h}^2dV
       \le C\int_{D_1}|\tau|_{\tilde h,\hat g}^2dV.
\]
When $D_1\ne X$, we apply the estimate on regular sublevel sets of
the smooth proper plurisubharmonic exhaustion of $D_1$, where the
boundary Levi term is nonnegative, and pass to a weak limit. When
$D_1=X$, we apply the compact version of the estimate (see
\cite{Hormander} and \cite[Proposition~8.6]{JST}). Finally,
\[
 e^{-\Lambda/2}\hat h\le\tilde h\le e^{\Lambda/2}\hat h,
\]
so the weighted and unweighted norms are uniformly equivalent, and
\eqref{eq:dbar} follows.
\end{proof}

We are now ready to apply the Gaussian peak-section construction of
\cite[Lemma~8.7]{JST}, using the local estimates above. Since our
bundles and domains differ from those of \cite{JST}, we include the
verification of the relevant hypotheses in the present setting.

\begin{lemma}\label{lem:peakrecovery}
Fix $A>0$ and points $q_i\in B_{g_{i,-1}}(p_i,A)$. After passing
to a subsequence, there exist an integer $k\ge2$, constants $M,b>0$,
and sections $\sigma_i\in H^0(D_1,L_i^k)$ such that
\[
 \|\sigma_i\|_{L^2(D_1,\hat h,\hat g)}\le M,\qquad
 |\sigma_i(q_i)|_{\hat h}\ge b.
\]
Here the degree $k$ and the constants $M,b$ are independent of $i$.
\end{lemma}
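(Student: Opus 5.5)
We will follow the Gaussian peak-section construction of \cite[Lemma~8.7]{JST}, and use Proposition~\ref{prop:localanalytic} and Proposition~\ref{prop:dbar} as the analytic input. First pass to a subsequence such that $(X,g_{i,-1},q_i)$ converges in the pointed Gromov-Hausdorff sense to the tangent shrinker of Section~\ref{sec:soliton}, smoothly on its regular part. The limit point $q_\infty$ lies in $B(p_\infty,A)$, but it may be singular. So we first pick a regular point $z_\infty$ with $d(z_\infty,q_\infty)<\epsilon$, where $\epsilon$ is small and chosen later. Then take $z_i\to z_\infty$ with $d_{g_{i,-1}}(z_i,q_i)<2\epsilon$. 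Near $z_i$ the metrics $g_{i,-1}$ are uniformly $C^{1,\alpha}$ close to the smooth limit on a ball of fixed radius $r_0$. This gives holomorphic coordinates on $B(z_i,r_0)$ in which $\omega_{i,-1}$ is $C^{1,\alpha}$-close to the Euclidean form, with constants independent of $i$. The curvature of $h_i$ is exactly $\omega_{i,-1}$ by \eqref{eq:exactpolarization}. Hence, on this ball, a local holomorphic frame $e_i$ of $L_i$ satisfies $|e_i|^2_{h_i}=e^{-\psi_i}$ with $\psi_i=|z|^2/2+O(|z|^{2+\alpha})$, after the usual holomorphic gauge normalization.

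For $k$ large, to be fixed, define $\sigma_0=\chi\, e_i^k$, where $\chi$ is a cutoff equal to $1$ on $B(z_i,r_0/2)$ in $\hat g$-scale. The $\hat h$-norm is then $\approx e^{-k|z|^2/2}$, which is a Gaussian of unit width in the $\hat g$ coordinates $\sqrt k z$. So $\|\sigma_0\|_{L^2}\le M_0$ and $|\sigma_0(z_i)|=1$. The form $\tau=\bar\partial\sigma_0$ is supported where $\chi$ is nonconstant, and there the Gaussian gives $\|\tau\|_{L^2}\le Ce^{-ck r_0^2}$. Its support lies in $D_1$ by \eqref{eq:ballsindomain}. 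Proposition~\ref{prop:dbar} then produces $\nu$ with $\bar\partial\nu=\tau$ and $\|\nu\|_{L^2}\le Ce^{-ckr_0^2}$. Set $\sigma_i=\sigma_0-\nu$, which is holomorphic on $D_1$ with $\|\sigma_i\|\le M$. The section $\nu$ is holomorphic on $B(z_i,r_0/2)$ in $\hat g$-scale, so Proposition~\ref{prop:localanalytic} gives $|\nu(z_i)|\le KCe^{-ckr_0^2}\le1/4$ once $k$ is large. Thus $|\sigma_i(z_i)|_{\hat h}\ge 3/4$.

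Next we transfer this lower bound from $z_i$ to $q_i$ using the gradient bound of Proposition~\ref{prop:localanalytic}. The point $q_i$ is at $\hat g$-distance at most $2\epsilon\sqrt k$ from $z_i$. We have $|d|\sigma|_{\hat h}|\le|\nabla\sigma|\le KM$, so $|\sigma_i(q_i)|\ge 3/4-2KM\epsilon\sqrt k$. We fix $k$ first, through $r_0$, which depends only on $z_\infty$. That creates a circularity: $r_0$ depends on $z_\infty$, and $z_\infty$ depends on $\epsilon$.

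We expect this circularity to be the main obstacle. We would resolve it using Section~\ref{sec:soliton}: the tangent cones at $q_\infty$ are Ricci-flat Kähler cones, and the singular set has codimension at least four. These two facts give, at a suitably small scale around $q_\infty$, a regular point $z_\infty$ whose regularity radius $r_0$ is comparable to its distance $\epsilon$ from $q_\infty$. We then take $k\sim C r_0^{-2}$, so that $\epsilon\sqrt k$ is a controlled multiple of a fixed constant, and shrink the ratio $\epsilon/r_0$ only by a universal factor. If needed, we would instead apply the peak-section argument directly on the tangent cone, in the style of Donaldson-Sun, after rescaling the Gaussian. Since $q_\infty$ is fixed once the subsequence is chosen, the resulting $k,M,b$ are independent of $i$.
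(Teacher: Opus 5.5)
Your main route has a genuine gap at exactly the place you flagged, and the proposed repair does not close it. Suppose $q_\infty$ is a singular point of $Z$. Then any regular point $z_\infty$ whose ball $B(z_\infty,r_0)$ is regular satisfies $r_0\le d(z_\infty,q_\infty)<\epsilon$. Your Gaussian localization needs $kr_0^2\gg1$, so that $KCe^{-ckr_0^2}\le1/4$. Your gradient transfer needs $2KM\epsilon\sqrt k<3/4$. Since $\epsilon\sqrt k\ge r_0\sqrt k$, these two requirements are incompatible. Taking $r_0$ comparable to $\epsilon$ with $k\sim Cr_0^{-2}$ only gives $\epsilon\sqrt k\gtrsim\sqrt C$, which is large. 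No tuning of constants fixes this. The point $q_i$ lies at $\hat g$-distance at least about $\sqrt k\,r_0$ from $z_i$, where $\sigma_0$ is either cut off or already of size $e^{-ckr_0^2}$. So your $\sigma_i$ is in fact of at most that order at $q_i$: a section localized in a regular ball that does not contain $q_i$ cannot peak at $q_i$. When $q_\infty$ is regular your argument does work, with $z_\infty=q_\infty$.

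What is needed is the construction you mention only as a fallback, and that is where the real content lies. The paper takes $k$ to be one of the dilation factors $k_j$ for which $(Z,\sqrt{k_j}\,d,q_\infty)$ is close to a Ricci-flat K\"ahler tangent cone $C(\Sigma)$. It then builds the section on the cone from the Gaussian $e^{-r^2/4}$ centred at the vertex $o$, that is, at the singular point itself. Two steps are then indispensable, and both are absent from your proposal.

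First, on the regular part of the cone, the limit of $(L_i^{k_j},\hat h)$ is only the Gaussian bundle twisted by a flat unitary bundle. The paper removes the twist as follows. The cone is log terminal \cite[Theorem~B(iv)]{HZ}, so Braun's theorem \cite[Theorem~1]{Braun}, combined with radial dilation, shows that the regular part has finite fundamental group. A fixed tensor power therefore trivializes the flat factor.

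Second, besides the cutoff at large radius, the constant section must be cut off near the singular set of the cone with small $\bar\partial$-error. This uses the vanishing $W^{1,2}$-capacity of that set.

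Only then does a comparison-point argument like yours work. One chooses a regular $z$ so close to $o$ that the Gaussian norm there exceeds $3/4$ and $KM\,d_{\hat g}(q_i,z_i)<1/4$. Moving $z$ toward the vertex no longer forces $k$ to grow, because the section is no longer built on a large regular ball around $z$.
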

\begin{proof}
By Proposition~\ref{prop:metricsoliton}, after passing to a
subsequence, the pointed spaces converge to $(Z,d,p_\infty)$ and $q_i$
converges to a point $q\in Z$. Rescaling at $q$ by integers
$k_j\to\infty$, which multiplies the metric by $k_j$ and distances by
$\sqrt{k_j}$, we obtain a tangent cone $C(\Sigma)$, which is
Ricci-flat and K\"ahler. We write $o$ for its vertex. The convergence
is smooth on compact subsets of the regular locus, and the curvatures
of the corresponding unitary connections on $L_i^{k_j}$ converge to
the K\"ahler form of the cone.

On the regular part of the cone, any limiting bundle differs from the
Gaussian bundle with metric $h_C=e^{-r^2/4}$ by a flat unitary bundle,
where $r=d_C(o,\cdot)$ and $\ddbar(r^2/4)=\omega_C$ in our convention.
Since the cone has log terminal singularities by
\cite[Theorem~B(iv)]{HZ}, Braun's regional fundamental-group theorem
\cite[Theorem~1]{Braun}, applied at the vertex and combined with
radial dilation, shows that the fundamental group of the regular part
of the cone is finite. Hence a fixed tensor power trivializes the flat
unitary factor, and after replacing the degrees by this multiple and
rescaling the cone, we may take the Gaussian bundle as the model.

The constant section of the Gaussian bundle has finite $L^2$ norm. We
cut it off at large radius and near the singular set. The first error
is controlled by Gaussian decay, and the second by the vanishing
$W^{1,2}$ capacity of the singular set. The resulting sections are
supported in regular compact subsets and have arbitrarily small
$\bar\partial$-error. We transport them first to $Z$ and then to the
approximating slices, whose domains contain their supports by
\eqref{eq:ballsindomain}. As in \cite[Lemma~8.7]{JST}, we first choose
the cutoffs and a sufficiently large but fixed degree $k$, and then
take $i$ large.

It remains to verify the lower bound at $q_i$. Choose a regular
comparison point $z$ so close to $o$ that the norm of the Gaussian
section at $z$ exceeds $3/4$. The transported sections $s_i$ then have
uniformly bounded $L^2(D_1,\hat h,\hat g)$ norm and arbitrarily small
$\|\bar\partial s_i\|_2$, and satisfy $|s_i(z_i)|>2/3$. By
Proposition~\ref{prop:dbar}, we can solve
\[
 \bar\partial\nu_i=\bar\partial s_i,\qquad
 \|\nu_i\|_2\le C\|\bar\partial s_i\|_2,
\]
and set $\sigma_i=s_i-\nu_i$. On a regular ball about $z_i$, the
right-hand side converges to zero in smooth norms, so the smooth
convergence of the geometry and interior elliptic estimates make
$|\nu_i(z_i)|$ arbitrarily small. Thus $|\sigma_i(z_i)|>1/2$.
Finally, Proposition~\ref{prop:localanalytic} gives a uniform bound
for $|\nabla\sigma_i|$ on a ball containing $q_i$ and $z_i$. Choosing
the comparison point close enough to the cone vertex, and the errors
small enough, that this bound times $d_{\hat g}(q_i,z_i)$ is less than
$1/4$, and integrating along a minimizing path, we obtain
$|\sigma_i(q_i)|>1/4$. This proves the lemma.
\end{proof}

We can now state and prove the local partial $C^0$-estimate.

\begin{theorem}[Local partial $C^0$ estimate]\label{thm:partialC0}
For every $A>0$, there exist $k_A\in\mathbb N$ and $c_A>0$
such that, for all sufficiently large $i$,
\begin{equation}\label{eq:partialC0}
 \rho^{D_0}_{i,k_A}(q)\ge c_A
       \qquad\hbox{for all }q\in B_{g_{i,-1}}(p_i,A).
\end{equation}
In particular, the estimate holds at $p_i=x$.
\end{theorem}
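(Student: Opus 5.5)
The plan is to argue by contradiction and reduce to Lemma~\ref{lem:peakrecovery}, together with a finite-degree multiplication argument that makes the degree independent of the point. Fix $A>0$. Suppose the estimate fails for every pair $(k,c)$. Then for each candidate degree $k$ there are infinitely many $i$ and points $q_i\in B_{g_{i,-1}}(p_i,A)$ with $\rho^{D_0}_{i,k}(q_i)\to0$.

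The first step is to handle a single sequence of points. Given any sequence $q_i\in B_{g_{i,-1}}(p_i,A)$, Lemma~\ref{lem:peakrecovery} supplies, after passing to a subsequence, a degree $k\ge2$ and sections $\sigma_i\in H^0(D_1,L_i^k)$ with
\[
 \|\sigma_i\|_{L^2(D_1,\hat h,\hat g)}\le M,\qquad |\sigma_i(q_i)|_{\hat h}\ge b.
\]
We restrict $\sigma_i$ to $D_0$, which only decreases the $L^2$ norm. We then convert the scaled normalization $(\hat h,\hat g)=(h_i^k,kg_{i,-1})$ back to the blow-up scale of \eqref{eq:localbergman}. This only introduces the factor $k^{n}$ in the volume form, so $\rho^{D_0}_{i,k}(q_i)\ge b^2k^{-n}M^{-2}>0$ along the subsequence.

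The main obstacle is that $k$ a priori depends on the chosen sequence. This is why a naive contradiction argument, with $k$ fixed before choosing $q_i$, does not close. My plan to obtain a uniform $k_A$ combines compactness of the limit with the multiplicative structure of sections.

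First, I would show that the degree produced at a limit point $q\in\overline{B(p_\infty,A)}\subset Z$ works uniformly on a neighborhood. The derivative bound of Proposition~\ref{prop:localanalytic}, read at the blow-up scale via Corollary~\ref{cor:sectionD0}, gives $|\nabla\sigma_i|\le C_k$. Hence $|\sigma_i|\ge b/2$ on $B_{g_{i,-1}}(q_i,r_q)$ for a radius $r_q>0$ depending on $q$ and $k$ but not on $i$. This yields a lower bound for $\rho_{i,k}$ on that whole ball. Since $\overline{B(p_\infty,A)}$ is compact (it lies in a proper length space), finitely many such balls, with degrees $k_1,\dots,k_m$, cover the Gromov--Hausdorff approximating balls $B_{g_{i,-1}}(p_i,A)$ for large $i$.

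Second, I would pass to a common degree $K=\prod k_j$ by taking powers $\sigma_i^{K/k_j}$. The pointwise lower bound raises to a power directly. For the $L^2$ bound, the sup bound from Corollary~\ref{cor:sectionD0} on $D_0$ gives $\int_{D_0}|\sigma^m|^2\le(\sup_{D_0}|\sigma|)^{2m-2}\int_{D_0}|\sigma|^2$, with a uniform constant. This requires using $D_0\Subset D_1$ and that the polarization holds on all of $D_1$.

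Finally, the original contradiction sequence must be reconciled with the subsequences chosen above. I would organize the argument so that all subsequences are extracted once, along a single diagonal subsequence on which $(X,g_{i,-1},p_i)$ converges, and then argue by contradiction on that subsequence. Taking $k_A=K$ and $c_A$ the minimum of the finitely many lower bounds proves \eqref{eq:partialC0}, and the case $q=p_i=x$ is included.
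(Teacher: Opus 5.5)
There is a genuine gap in the step that makes the degree uniform: the covering argument, and your plan to ``reconcile'' subsequences afterwards, do not work as described.

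Lemma~\ref{lem:peakrecovery} yields $k,b,M$ only after passing to a subsequence. That subsequence depends on the chosen sequence $(q_i)$, and hence on the limit point $q\in Z$. To pick your finite subcover, you first need the radii $r_q$ for every $q\in\overline{B(p_\infty,A)}$, but the subsequences attached to different $q$ are not nested. Refining to a common subsequence for the finitely many centres means re-applying the lemma, which may change $k,b,M$ and hence $r_q$, so the chosen balls need no longer cover. A diagonal extraction over uncountably many $q$ is impossible. Over a countable dense set the radii may shrink to zero, so again there is no cover. Even if this were repaired, you would obtain the bound only along one subsequence, whereas the theorem asserts it for all sufficiently large $i$. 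Finally, ``arguing by contradiction on that subsequence'' with the degree fixed in advance is exactly the argument you yourself observed does not close.

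The missing idea is to put the degree into the contradiction hypothesis as a growing range rather than a fixed value. Claim: there are $K\ge2$ and $c>0$ such that, for every large $i$ and every $q\in B_{g_{i,-1}}(p_i,A)$, some $2\le k\le K$ satisfies $\rho^{D_1}_{i,k}(q)\ge c$. If not, there are $i_j\to\infty$ and points $q_{i_j}$ with $\rho^{D_1}_{i_j,k}(q_{i_j})<1/j$ for all $2\le k\le j$. A single application of Lemma~\ref{lem:peakrecovery} to this sequence gives, along a subsequence, a fixed $k$ with $\rho^{D_1}_{i_j,k}(q_{i_j})\ge k^nb^2/M^2$. This contradicts the hypothesis once $j\ge k$ and $1/j<k^nb^2/M^2$.

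Note that the factor is $k^{n}$, since $dV_{\hat g}=k^n\,dV_{g_{i,-1}}$; your $k^{-n}$ gives a weaker but still valid bound. This argument needs no compactness of $Z$ and no gradient-based neighbourhoods, and it yields the statement for all large $i$.

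Your last step then goes through unchanged, and this is the paper's proof. Take $k_A=\operatorname{lcm}(2,\dots,K)$ (a product also works). Raise an $L^2(D_1)$-normalized section of degree $k$ to the power $k_A/k$. Then control the $L^2(D_0)$ norm of the power using the sup bound of Corollary~\ref{cor:sectionD0} on $D_0$.
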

\begin{proof}
For the sections in Lemma~\ref{lem:peakrecovery}, the volume scaling
$dV_{\hat g}=k^n dV_{g_{i,-1}}$ gives
\[
 \rho^{D_1}_{i,k}(q_i)\ge \frac{k^nb^2}{M^2}>0.
\]
We claim that there exist an integer $K\ge2$ and a constant $c>0$
such that, for every sufficiently large $i$ and every
$q\in B_{g_{i,-1}}(p_i,A)$, some $2\le k\le K$ satisfies
$\rho^{D_1}_{i,k}(q)\ge c$. Otherwise, there would exist
$i_j\to\infty$ and points $q_{i_j}$ at which all densities in degrees
$2\le k\le j$ are less than $1/j$, contradicting
Lemma~\ref{lem:peakrecovery}.

Set $k_A=\operatorname{lcm}(2,\ldots,K)$. For each such $q$, choose a
degree $k$ satisfying the preceding lower bound and a section
$\sigma\in H^0(D_1,L_i^k)$ with
\[
 \int_{D_1}|\sigma|_{h_i^k}^2dV_{g_{i,-1}}=1,
 \qquad |\sigma(q)|_{h_i^k}^2\ge c/2.
\]
Let $m=k_A/k$. Corollary~\ref{cor:sectionD0} gives
\[
 \begin{split}
 \int_{D_0}|\sigma^m|_{h_i^{k_A}}^2dV_{g_{i,-1}}
   &\le\bigl(\sup_{D_0}|\sigma|_{h_i^k}\bigr)^{2m-2}
                 \int_{D_0}|\sigma|_{h_i^k}^2dV_{g_{i,-1}}\\
   &\le C_k^{2m-2},
 \qquad |\sigma^m(q)|_{h_i^{k_A}}^2\ge(c/2)^m.
 \end{split}
\]
Note that it is the restriction of the powers to $D_0$ which allows
the interior upper estimate to control their $L^2$ norms. Taking the
minimum of the resulting lower bounds over the finitely many possible
degrees proves \eqref{eq:partialC0}.
\end{proof}

\section{Type I tangent flows}\label{sec:soliton}

\subsection{The shrinking limit at a fixed base point}
In this section, we prove Theorem~\ref{thm:tangent}. The shrinking
soliton limit itself is provided by Bamler's tangent-flow and
structure theorems \cite[Theorems~2.18 and 2.37]{BamStructure}. The
main issue is to pass from $\mathbb F$ convergence to pointed
Gromov-Hausdorff convergence, and for this we follow the method of
Jian, Song and Tian \cite[Theorem~7.3]{JST} and of Hallgren, Jian,
Song and Tian \cite[Theorems~4.3-4.4]{HJST}. Under the global scalar
curvature bound of Part I, the required heat-kernel lower bound and
metric-measure convergence criterion are supplied by Hallgren
\cite[Lemma~3.2]{HallgrenEntropy}, \cite[Proposition~2.7]{HallgrenGW},
while the time continuity follows from the work of Bamler and Zhang
\cite{BZII}. We note that the arguments in this section are
independent of the partial $C^0$-estimate of
Section~\ref{sec:partialC0}.

Fix $x\in X$ and a conjugate heat family $\mu^x=(\mu^x_a)_{a<1}$
obtained as a limit of heat kernels with poles $(x,t_j)$ as
$t_j\rightarrow1$ (see \cite[Lemma~3.2]{HallgrenEntropy}). Then
Lemma~\ref{lem:moment} gives
\begin{equation}\label{eq:singulartimemoment}
 \int_X d_a(x,z)^2d\mu^x_a(z)\le C(1-a).
\end{equation}
In particular, its variance tends to zero as $a\rightarrow1$, as
required by \cite[Definition~2.32]{BamStructure}. As in the
introduction, set
$g_i(s)=\tau_i^{-1}g(1+\tau_i s)$ and
$\nu_{i,s}=\mu^x_{1+\tau_i s}$.

\begin{proposition}\label{prop:metricsoliton}
After passing to a subsequence, the metric-flow pairs
$(X,g_i(s),\nu_{i,s})$ converge in $\mathbb F$ to a shrinking metric
soliton with complete singular time slices, and the convergence is
smooth on compact subsets of the regular spacetime, where
\begin{equation}\label{eq:solitonequations}
 \Ric_{g_\infty(s)}+\nabla^2 f_s=\frac{1}{2|s|}g_\infty(s),
 \qquad (\nabla^2 f_s)^{2,0}=0.
\end{equation}
Moreover, if $(Z,d)$ denotes the time $-1$ slice of the limit, then
along the same subsequence, for every fixed $s<0$, there exists
$z(s)\in Z$ such that
\[
 (X,d_{g_i(s)},x)\longrightarrow (Z,\sqrt{|s|}\,d,z(s))
       \quad\text{in pointed Gromov-Hausdorff topology}.
\]
Finally, the soliton identifications preserve the complex structures
on the regular loci.
\end{proposition}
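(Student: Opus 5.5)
The plan has three steps: obtain the $\mathbb F$-limit and its soliton structure from Bamler's theory, upgrade to pointed Gromov--Hausdorff convergence, and finally identify the complex structures.

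\textbf{Step 1: the $\mathbb F$-limit.} The Nash entropy satisfies $-A\le\mathcal N\le0$. Together with the variance bound \eqref{eq:singulartimemoment}, this makes $\mu^x$ an admissible conjugate heat flow based at the singular time, in the sense of \cite[Definition~2.32]{BamStructure}. Bamler's tangent-flow theorem \cite[Theorem~2.37]{BamStructure} then yields a subsequence of $(X,g_i(s),\nu_{i,s})$ converging in $\mathbb F$ on compact time intervals. The limit is a metric soliton, and the convergence is smooth on the regular part by \cite[Theorem~2.18]{BamStructure}. The regular part carries the shrinker equation $\Ric+\nabla^2f_s=\frac1{2|s|}g_\infty(s)$, with $f_s$ the limit of the potentials of $\nu_{i,s}$. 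Because the approximating metrics are K\"ahler and the convergence is smooth, the complex structures $J$ pass to the limit as parallel tensors. Hence the limit is K\"ahler on $\mathcal R$, and the soliton vector field $\nabla f_s$ is real holomorphic, which gives $(\nabla^2f_s)^{2,0}=0$; this uses the K\"ahler refinement of \cite{HJ}. The self-similarity identifies the slice at time $s$ with $(Z,\sqrt{|s|}\,d)$, where $Z$ is the time $-1$ slice.

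\textbf{Step 2: upgrading to pointed Gromov--Hausdorff convergence.} On each compact time interval $[a,b]\subset(-\infty,0)$, \eqref{eq:rescaledscalar} gives a uniform two-sided scalar curvature bound. This puts us in the bounded-scalar-curvature regime of \cite{BamBounded,BZII}. There, unit-scale volume noncollapsing and noninflating hold, and so does a uniform comparison of distances at nearby times. These supply the local compactness needed to apply Hallgren's criterion \cite[Proposition~2.7]{HallgrenGW}, which converts $\mathbb F$ convergence into pointed metric-measure, hence pointed Gromov--Hausdorff, convergence. The only input not yet in hand is the lower heat-kernel bound near the base point, and this comes from \cite[Lemma~3.2]{HallgrenEntropy}.

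To place the base point, rescale Lemma~\ref{lem:moment}: the $\nu_{i,s}$-mass concentrates within distance $C\sqrt{|s|}$ of $x$. A subsequence of the points $x$ therefore converges to some $z(s)$ lying within $C\sqrt{|s|}$ of an $H_{2n}$-center of the limiting measure. By the bounded-scalar-curvature distance distortion of \cite{BZII}, the same subsequence works simultaneously for all $s$, first for $s$ in a countable dense set and then for all $s<0$ by continuity.

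\textbf{Step 3: complex structures, and the expected main obstacle.} It remains to check that the identifications between different slices, namely the soliton flow generated by $\nabla f$ together with rescaling, preserve $J$ on the regular loci. This should follow because $\nabla f$ is real holomorphic. The genuine difficulty lies in Step 2. The conjugate heat flow is based at the singular time, so the approximations are not heat kernels at a smooth time. One must verify that Hallgren's singular-time estimates apply uniformly with the pole at $x$, and that the pointed Gromov--Hausdorff limit is independent of $s$ up to scaling. This is exactly where the global Type I bound \eqref{eq:rescaledscalar} from Part I is indispensable.
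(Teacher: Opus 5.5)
Your proposal follows essentially the same route as the paper: Bamler's tangent-flow and structure theorems give the $\mathbb F$-limit, Hallgren's heat-kernel lower bound together with noncollapsing and the rescaled moment bound feeds his Gromov--Wasserstein-to-Gromov--Hausdorff criterion, convergence on a countable dense set of times is extended to all times via Bamler--Zhang, and the flow of $\partial_s-\nabla f_s$ preserves $J$. One point to tighten is the order of deduction in Step 1: $(\nabla^2 f_s)^{2,0}=0$ comes from the $J$-invariance of $\Ric$ and $g_\infty$ in the soliton equation, and the real-holomorphicity of $\nabla f_s$ is a consequence of this, not its source.
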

\begin{proof}
Perelman's entropy lower bound and Bamler's tangent-flow theorem
\cite[Theorem~2.37]{BamStructure} give the shrinking $\mathbb F$
limit. The structure of its singular time slices and smooth
convergence on the regular spacetime follow from
\cite[Theorems~2.5, 2.14 and 2.18]{BamStructure}.
Since the parallel complex structures converge on the regular locus,
and $\Ric$ and $g_\infty$ are $J_\infty$-invariant, the soliton
equation also gives $(\nabla^2f_s)^{2,0}=0$.

To establish pointed Gromov-Hausdorff convergence, write
$u_{i,s}=d\nu_{i,s}/dV_{g_i(s)}$. Since Part I gives a global Type I
scalar curvature bound, Hallgren's Gaussian estimate
\cite[Lemma~3.2]{HallgrenEntropy}, after rescaling, yields
\[
 u_{i,s}(y)\ge\frac{1}{C|s|^n}
 \exp\!\left(-\frac{C d_{g_i(s)}(x,y)^2}{|s|}\right).
\]
On each compact negative time interval, the volume noncollapsing
recalled in Section~2.2 therefore implies, for every $D<\infty$ and
$0<r\le1$,
\[
 \nu_{i,s}\bigl(B_{g_i(s)}(y,r)\bigr)\ge c_{s,D}r^{2n}>0,
 \qquad y\in B_{g_i(s)}(x,D),
\]
uniformly in $i$. Moreover, \eqref{eq:singulartimemoment} gives
\begin{equation}\label{eq:rescaledmoment}
 \int_X d_{g_i(s)}(x,z)^2d\nu_{i,s}(z)\le C|s|.
\end{equation}
At each time at which the $\mathbb F$ convergence is timewise, the
metric-measure slices converge in the Gromov-Wasserstein $W_1$ sense,
and the limiting slice is a complete, locally compact length space
whose measure has full support. Its variance is finite by
\eqref{eq:rescaledmoment} and lower semicontinuity, so Hallgren's
criterion \cite[Proposition~2.7]{HallgrenGW} gives pointed
Gromov-Hausdorff convergence to the same slice, with base points at
uniformly bounded distance from $x$ by \eqref{eq:rescaledmoment}.
After passing to a further subsequence, we may therefore take the base
point to be $x$ itself. We then
extract a common subsequence on a countable dense set of such times.
On each compact negative time interval, the uniform temporal distance
estimates in \cite[Theorem~1.1 and Corollaries~1.3-1.4]{BZII} and the
time continuity of the soliton extend the pointed convergence along
this subsequence to every time in the interval.

Finally, \cite[Theorem~2.18(a)-(c)]{BamStructure} provides maps
$F_s:Z\to\mathcal X_s$, $F_{-1}=\mathrm{id}$, satisfying
\begin{equation}\label{eq:solitonidentification}
 d_s(F_s(p),F_s(q))=\sqrt{|s|}\,d(p,q),\qquad p,q\in Z.
\end{equation}
On the regular spacetime, these maps are generated by the vector field
$\partial_s-\nabla f_s$, which preserves the complex structure.
Letting $x_\infty(s)$ denote the limiting marked point on
$\mathcal X_s$ and setting $z(s)=F_s^{-1}(x_\infty(s))$, we obtain
\eqref{eq:intropointedGH} on the fixed space $Z$.
\end{proof}

We note that the marked points $z(s)$ may vary with $s$ in soliton
coordinates. If the original flow extends smoothly through time $1$
near $x$, its rescalings simply converge to the Gaussian shrinking
soliton.

\subsection{Normal analytic structure}
We now turn to the complex analytic structure of the limit. By
Proposition~\ref{prop:metricsoliton}, $Z=\mathcal X_{-1}$ is a
singular K\"ahler-Ricci shrinker arising from compact K\"ahler-Ricci
flows with a uniform lower bound for the pointed Nash entropy. The
theorem of Hallgren and Zhang \cite[Theorem~B(i),(iii)]{HZ} therefore
endows $Z$ with a normal complex analytic structure of pure dimension
$n$ with log terminal singularities. Moreover, the analytic and metric
topologies agree, and the analytic smooth locus coincides with the
metric regular locus $\mathcal R_Z$. The application of their theorem
to finite-time tangent flows is included in \cite[Section~2.2, case~(I)]{HZ}.

Since the structure sheaf is characterized locally by continuous
functions which are holomorphic on $\mathcal R_Z$, and since the maps
$F_s$ in \eqref{eq:solitonidentification} are homeomorphisms which are
biholomorphic on the regular loci, these maps and their inverses
preserve the structure sheaves. They therefore identify all
negative-time slices biholomorphically with $Z$.

Finally, by \cite[Theorem~B(iv)]{HZ}, each point of $Z$ has a unique
metric tangent cone, which is Ricci-flat and K\"ahler and whose
underlying space is an affine variety with log terminal singularities. 
This is the tangent-cone structure used in
Section~\ref{sec:partialC0}. Together with
Proposition~\ref{prop:metricsoliton}, these results complete the proof
of Theorem~\ref{thm:tangent}.

\end{document}